\newcommand{\Hmm}[1]{\leavevmode{\marginpar{\tiny%
$\hbox to 0mm{\hspace*{-0.5mm}$\leftarrow$\hss}%
\vcenter{\vrule depth 0.1mm height 0.1mm width \the\marginparwidth}%
\hbox to 0mm{\hss$\rightarrow$\hspace*{-0.5mm}}$\\\relax\raggedright #1}}}
\newtheorem{thm}{Theorem}[section]
\newtheorem{cor}[thm]{Corollary}
\newtheorem{lemma}[thm]{Lemma}
\newtheorem{pro}[thm]{Proposition}
\theoremstyle{definition}
\newtheorem{eg}[thm]{Example}
\newcommand{\Z}{{\mathbb Z}}
\newcommand{\R}{{\mathbb R}}
\newcommand{\C}{{\mathbb C}}
\newcommand{\Q}{{\mathbb Q}}
\newcommand{\al}{{\alpha}}
\newcommand{\eps}{{\varepsilon}}
\newcommand{\ka}{{\Phi}}
\newcommand{\si}{{\sigma}}
\newcommand{\lm}{{\lambda}}
\newcommand{\ph}{{\varphi}}
\newcommand{\ov}[1]{\overline{ #1}}
\newcommand{\ow}[1]{\widetilde{ #1}}
\begin{document}
\title[Geometry and spectrum of tessellations]{Geometric and spectral consequences of curvature bounds on tessellations}

\author[M. Keller]{Matthias Keller}
\address{Universit\"at Potsdam, Institut f\"ur Mathematik, 14476  Potsdam, Germany} \email{matthias.keller@uni-potsdam.de}


\date{\today}
\maketitle


\section{Introduction}

This chapter focuses on the theme of geometric and spectral consequences of curvature bounds. The geometric setting are tessellations of surfaces with finite and zero genus.  Our main focus is  on infinite tessellations. Several of the results  presented here have analogues in Riemannian geometry. In some cases one can go even beyond the Riemannian results and there also striking differences which shall be highlighted.

The curvature under investigation arises as a combinatorial quantity with the geometric interpretation of an angular defect.
For a vertex $v$ the curvature is given by
\begin{align*}
\ka(v)=1 - \frac{d_{v}}{2}+\sum_{f \mbox{\scriptsize{ face with }}v\in f}\frac{1}{\deg(f)}= \frac {1}{2\pi}\Big(2\pi-\sum_{f \mbox{\scriptsize{ face with }}v\in f}\beta(f)\Big),
\end{align*}
where $d_{v}$ is the vertex degree, $\deg(f)$ the number of vertices contained in a face $f$ and $\beta(f)=(\deg(f)-2)/2\deg(f)$ the inner angle of $f$ considered as a regular $\deg(f)$-gon. We introduce this concept in detail and discuss the basic features below which is the first part of the chapter.

In the second part of the chapter, in Section~\ref{s:geometry}, geometric consequences of curvature bounds are discussed. Here, the discrete Gau\ss-Bonnet Theorem provides a starting point from which various directions shall be explored. First, it is discussed that positivity of the curvature yields finiteness of the graph which is an analogue to a theorem of Myers from Riemannian geometry. In contrast, non-positive curvature yields infinite tessellations for which we study further geometric properties. Specifically, a Hadamard-Cartan theorem  is  presented which states that geodesics can be continued indefinitely as a consequence of non-positive curvature. Furthermore,  volume growth bounds are derived from upper and lower curvature bounds.  Finally, hyperbolicity properties such as strong isoperimetric inequalities and Gromov hyperbolicity as consequences of negative curvature are studied.

In the third part of the chapter, in Section~\ref{s:spectrum}, we focus on spectral properties of the combinatorial Laplacian
\begin{align*}
    \Delta\ph(v)=\sum_{w\sim v}(\ph(w)-\ph(v))
\end{align*}
on $\ell^{2}(V)$.  Let us stress that this combinatorial Laplacian  differs from the normalized Laplacian introduced in the previous chapter (which is also referred to as the Tutte  Laplacian or harmonic Laplacian). The main reason is that the operator above captures phenomena related to unbounded geometry for infinite graphs much better.

Many of the spectral properties of $\Delta$ are consequences of the geometric properties which were established before. First, we discuss whether the combinatorial Laplacian admits a spectral gap due to  the curvature being non-negative or negative which includes an analogue to a classical theorem of McKean.
Secondly, we focus on the case of uniformly decreasing curvature which was studied in the Riemannian setting  by Donnelly/Li. Here, we not only characterize the situation, where the combinatorial Laplacian has only discrete eigenvalues as spectrum but also determine the asymptotics of eigenvalues as well as the decay of eigenfunctions. Here, it appears that the eigenvalue asymptotics behave differently than in the case of manifolds.
Thirdly, we explore the phenomena of compactly supported eigenfunctions which is often studied under the term unique continuation of eigenfunctions. It came as a surprise  that in contrast to the continuous setting such compactly supported eigenfunctions appear in the discrete setting. However, they can be excluded in situations of non-positive curvature.
Finally, we briefly consider the $\ell^{p}$-spectrum of the combinatorial Laplacian and ask when it differs from the $\ell^{2}$ spectrum, which are discrete analogues of results of Sturm.

In the outlook section, which forms the fourth part, we discuss how the results of the first  parts can be generalized to  planar graphs or more general polygonal complexes including two dimensional buildings.

All the results are discussed in the light of the present literature which reaches back from the seventies to the very present and has contributions from various schools of researchers. We  sketch the key ideas of the proofs without going into too much detail.

\subsection{Graphs}
Let a connected simple graph  $(V,E)$ be given. We call two vertices $x,y\in V$ which are connected by an edge \emph{adjacent} and denote $x\sim y$.  We call a sequence $(x_{r})_{r\ge0}$ of subsequently adjacent and pairwise distinct vertices a \emph{path}. For a finite path  $(x_{0},\ldots,x_{n})$, we call $n$ the \emph{length} of the path.    For two vertices $x,y\in V$, we define the \emph{distance} $d(x,y)$ of $x$ and $y$ to be the  length of the shortest path connecting $x$ and $y$. We call a path $(x_{r})$ a geodesic if $d(x_{0},x_{n})=n$ for all indices $n$ of the path.

The \emph{sphere} $S_{r}=S_{r}(o)$ of radius $r$ about a fixed vertex $o$ (which is mostly suppressed in notation) includes the vertices with distance $r$ to $o$. The \emph{ball} $B_{r}=B_{r}(o)$ of radius $r$ about $o$ is the union of all spheres of distance less or equal to $r$.

\subsection{Tessellations of  surfaces}
In this chapter we consider tessellations. From a purely graph-theoretic approach this might seem rather restrictive but it is  natural from a discretization and geometric point of view and it allows to prove finer results.

We assume that $(V,E)$ can be \emph{embedded} in an oriented topological surface $\mathcal{S}$ {without self intersections}. That means that the vertices $V$ are identified with points in $\mathcal{S}$ and the edges $E$ are identified with simple curves connecting its end vertices such that two different edges intersect only in the vertices they have in common. Throughout this chapter we will not distinguish between the graph and its embedding.

We always assume that the embedding is \emph{locally compact}, i.e.,  every compact $K\subseteq \mathcal{S}$ intersects only finitely many edges.

A graph is called \emph{planar} if there is a locally compact embedding into a surface $\mathcal{S}$ homeomorphic to $\R^{2}$ and \emph{spherical} if $\mathcal{S}$ can be chosen to be homeomorphic to the  sphere $\mathbb{S}^{2}$.

For an embedded graph $(V,E)$, we let the set of faces $F$ consist of  the closures of the connected components of
\begin{align*}
    \mathcal{S}\setminus \bigcup E.
\end{align*}
We denote $G=(V,E,F)$ and following \cite{BP1,BP2} we call $G$ a \emph{tessellation} or a \emph{tiling} if
\begin{itemize}
  \item [(T1)] every edge is included in   two faces,
  \item [(T2)] every two faces are either disjoint or intersect in one vertex or one edge,
  \item [(T3)] every face is homeomorphic to a closed disc.
\end{itemize}

Let us shortly discuss the local compactness assumption in the light of tessellations. This assumption can be violated for two reasons. The first is that the point where the assumption fails is a vertex. This, however, implies that infinitely many edges emanate from this vertex. So, a locally compact embedding first of all implies that the graph is locally finite, i.e., each vertex has only finitely many neighbors. Secondly, the assumption can fail for a point which is not a vertex. Removing all such points from the surface, annihilates the problem and the embedding becomes locally compact. So, in this case the graph can be embedded  locally compactly in a different surface.


\subsection{Curvature}
To define a curvature function, we introduce the vertex degree and the face degree. First we let  the
\emph{degree} of a vertex $v\in V$ be defined as
\begin{eqnarray*}
    d_{v}=\#\mbox{edges emanating from $v$}.
\end{eqnarray*}
The \emph{face degree} of a face $f\in F$ is defined as
\begin{eqnarray*}
    \deg(f)=\#\mbox{boundary edges of $f$}    =\#\mbox{boundary vertices of $f$}.
\end{eqnarray*}
The \emph{vertex curvature} $\ka:V\to \R$ is defined as
\begin{eqnarray*}
\ka(v)=1-  \frac{d_{v}}{2}+\sum_{f\in F, v\in {f}}\frac{1}{\deg(f)}.
\end{eqnarray*}
This idea is already found in the works of  Descartes, see e.g. \cite{Fed}. Later it was
introduced in the above form by Stone, \cite{S1}, where he refers ideas going back to Alexandrov. The notion reappeared in various settings \cite{Gr, Ish} and was studied intensively since then \cite{BP1,BP2,DM,Hi,HJL,K1,K2,KP, Oh,Woe,Zuk}.

This  notion of curvature is motivated by an angular defect as follows: Think of  a face $f$ as a regular polygon. Then, the interior angles of $f$ are equal to
\begin{align*}
    \beta(f)=2\pi\frac{\deg(f)-2}{2\deg(f)}.
\end{align*}
This formula is easily derived as going around $f$ results in an angle of $2\pi$, while going around the $\deg(f)$ corners of $f$ one takes a turn by an angle of $\pi-\beta(f)$ each time. With this interpretation the  curvature of  a vertex $v\in V$ can be rewritten as
\begin{align*}
    2\pi\ka(v)= 2\pi-\sum_{f\in F, v\in f}\beta(f).
\end{align*}
Despite of this interpretation, it should be stressed that the mathematical nature of $\ka$ is purely combinatorial. However,  when we assume an embedding such that every polygon is regular we have the geometric interpretation above. Furthermore, the curvature satisfies a
Gau{\ss}-Bonnet formula which is found in the next section in  Theorem~\ref{t:GaussBonnet}.

Before we come to this, we introduce a finer notion of curvature.  To this end, we let the set of \emph{corners} $C(G)$ of the graph $G$ be given by the set of pairs $(v,f)\in V\times F$ such that $v$ is contained in $f$.  One can picture the corners around a vertex as the connected components of a small neighborhood of $v$ in $\mathcal{S}$ after removing the edges emanating from $v$. The \emph{corner curvature} $\ka_{C}:C(G)\to \R$ is given by
\begin{align*}
    \ka_{C}(v,f) =\frac{1}{d_{v}}-\frac{1}{2} +\frac{1}{\deg(f)}.
\end{align*}
This quantity was first introduced in \cite{BP1,BP2} for tessellations  and in \cite{K2} for general planar graphs (cf. Section~\ref{s:planar}). The corner curvature measures the contribution of every corner of a vertex to the vertex curvature, i.e., summing over all corners of a  vertex, we get
\begin{align*}
    \ka(v)=\sum_{(v,f)\in C(G)}\ka_{C}(v,f).
\end{align*}
With the interpretation of the angular defect above the corner curvature can be expressed as
\begin{align*}
     2\pi\ka_{C}(v,f)= \frac{1}{d_{v}}(2\pi-d_{v}\beta(f)),\quad (v,f)\in C(G).
\end{align*}

We say a tessellation is \emph{flat} if $\ka\equiv 0$. Let us discuss a few examples.

\begin{eg}We say a tessellation of $\mathcal{S}$  is $(p,q)$-regular if $q$ regular polygons of degree $p$ meet in every vertex. Then, the sign of vertex curvature is already determined by the sign of the corner curvature which is constantly $1/q-1/2+1/p$.
Let us come to more specific examples.

(a) The examples of $(p,q)$-regular tessellations of $\mathbb{S}^{2}$ of positive curvature are the platonic solids, i.e., the tetrahedron $(3,3)$, the octahedron $(3,4)$, the icosahedron $(3,5)$, the cube $(4,4)$ and the dodecahedron $(5,3)$. 

(b) Flat $(p,q)$-regular tessellations of $\R^{2}$ are exactly either the square tessellation associated to $\Z^{2}$ of type $(4,4)$, the flat triangular tessellation $(3,6)$ and the hexagonal honeycomb tessellation $(6,3)$.
Furthermore, any large enough rectangular subset of $\Z^{2}$ can be realized as a tessellation of the torus in the usual way by identifying opposite sides. This  results again flat curvature. Similar constructions can be realized for the other two flat tessellations.

(c) Planar $(p,q)$-regular tessellations of the type $(4,5)$, $(5,5)$, $(5,6)$, $(5,4)$, $(6,5)$ or such that $p\ge7$ or $q\ge3$ give rise to tessellations of the hyperbolic plane with negative curvature.

See Figure~\ref{f:curvature} for examples of (a), (b) and (c).
\end{eg}
\newpage

\begin{figure}[!h]
\scalebox{0.17}{\includegraphics{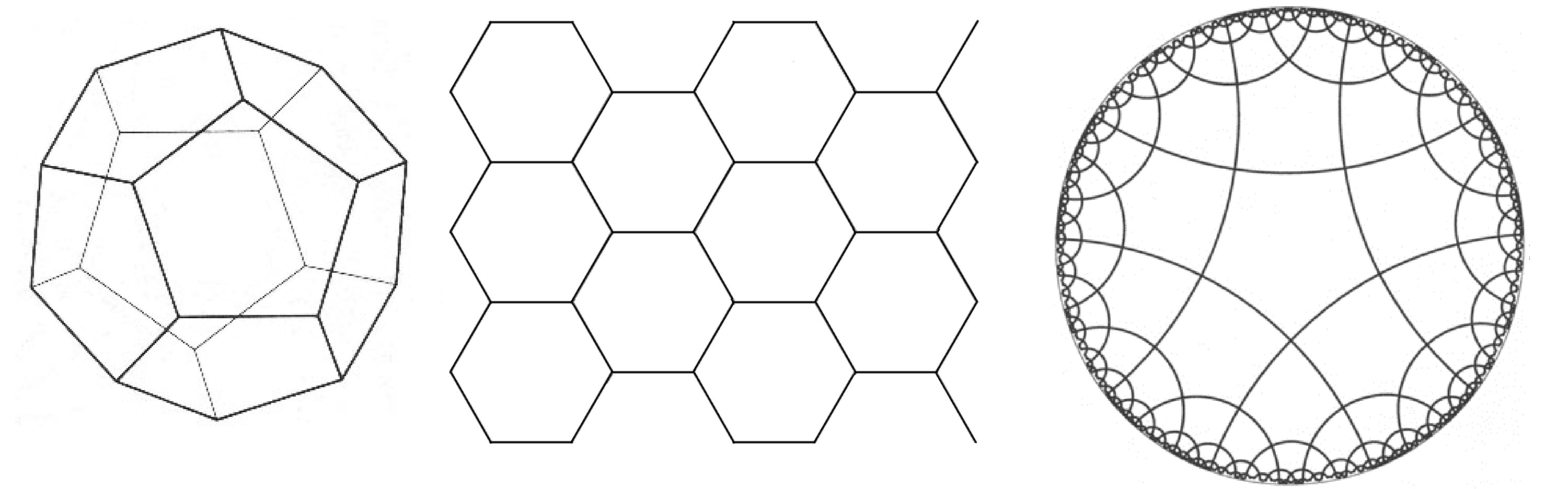}}
\caption{\label{f:tilings}(a) The dodecahedron, (b) the hexagonal tiling and (c) the $(5,4)$ tessellation embedded in the Poincar\'e disc.}\label{f:curvature}
\end{figure}

Next, we discuss  examples of non definite curvature, see Figure~\ref{f:Kagome} as well.

\begin{eg}The so called Cairo tiling consists of pentagons. For each pentagon there are two non adjacent vertices that are contained in four other pentagons. This results in negative corner and vertex curvature. The other three vertices are contained in three pentagons resulting in positive corner and vertex curvature.
\end{eg}

\begin{eg}The Penrose tiling consists of squares. There are vertices adjacent to five and six squares resulting in negative curvature, vertices adjacent to four squares resulting in zero curvature and vertices adjacent to three squares resulting in positive curvature. This holds for the corner curvatures as well as for the vertex curvature.
\end{eg}

Finally, we discuss examples of non-positive and negative vertex curvature which have yet non definite corner curvature.

\begin{eg}\label{e:Kagome} The class of example we discuss next consists of regular polygons of degree $p\ge6$ and of triangles. In each vertex two triangles and two $p$-gons meet such that faces of the same type are opposite to each other. The vertex curvature in a vertex is then $1/p-1/6\leq0$. However, the curvature in corners of a triangle is $1/12$ and in the corners of a $p$-gon is $1/p-1/2<0$. Hence, the corner curvature is non-definite. A special case with $q=6$ is  called the  trihexagonal tiling or the Kagome lattice or the hexadeltille.
\end{eg}

\begin{figure}[!h]
\scalebox{0.3}{\includegraphics{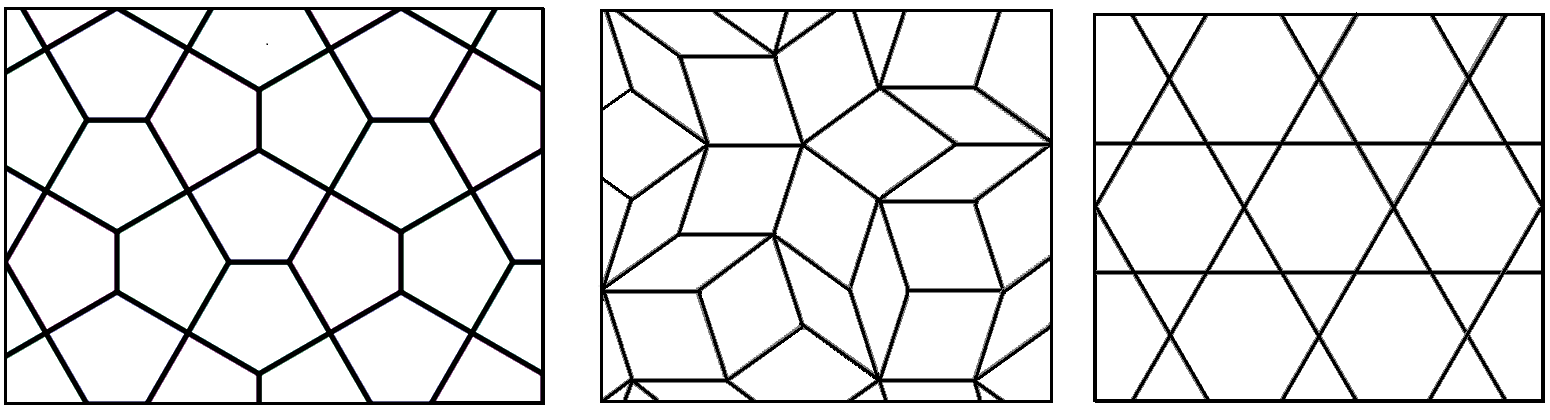}}
\caption{\label{f:tilings}The Cairo tiling, the Penrose tiling and
the Kagome lattice.}\label{f:Kagome}
\end{figure}

\subsection{Duality}
We make some brief remarks about duality. To every tessellation $G=(V,E,F)$ we can relate a dual tessellation $G^{*}=(V^{*},E^{*},F^{*})$ in the following way: To define the vertex set we chose for each face in $F$ a point in its interior which we refer to as a vertex in $V^{*}$. If two faces in $F$ intersect in an edge we connect the corresponding vertices in $V^{*}$ by an edge. This gives a graph embedded in the same surface and it can be seen that the faces of $(V^{*},E^{*})$ can be related one-to-one to the vertices in $V$. Let $C(G^{*})$ be the corners of $G^{*}$. We see that, there are bijective maps from $V$ to $F^{*}$, $E$ to $E^{*}$, $F$ to $V^{*}$ and $C(G)$ to $C(G^{*})$. Denote the bijective maps $V\to F^{*}$, $v\mapsto {v}^{*}$ and $F\to V^{*}$, $f\mapsto {f}^{*}$ and observe that
\begin{align*}
    d_{v}=d_{v^{*}}\quad\mbox{and}\quad\deg(f)=|f^{*}|.
\end{align*}
We stress that $d_{v^{*}}$ is a face and $f^{*}$ is a vertex in $G^{*}$.
We denote the corner curvature  of $G^{*}$ by $\ka^{*}_{C}$ and observe that
\begin{align*}
    \ka_{C}(v,f)=\frac{1}{d_{v}}-\frac{1}{2}+\frac{1}{\deg(f)} =\frac{1}{_{v^{*}}}-\frac{1}{2}+\frac{1}{|f^{*}|}=\ka_{C}^{*}(f^{*},v^{*}).
\end{align*}
Moreover, defining the face curvature $\ka_{F}^{*}:F^{*}\to\R$ on $G^{*}$ viz
\begin{align*}
    \ka_{F}^{*}(f)=\sum_{(v, f)\in C(G^{*})}\ka_{C}^{*}(v,f)= 1-\frac{\deg(f)}{2}+\sum_{v\in V^{*}, v\in{f}}\frac{1}{d_{v}},
\end{align*}
for $ f\in F^{*}$,
we see that
\begin{align*}
    \ka(v)=\sum_{(v, f)\in C(G)}\ka_{C}(v,f)=\sum_{(f^{*},v^{*})\in C^{*}(G)}\ka_{C}^{*}(f^{*},v^{*})=\ka_{F}^{*}(v^{*}),
\end{align*}
for $v\in V$.
Similarly, if we define a face curvature on $G$, then the vertex curvature on $G^{*}$ can be related analogously.
We summarize that for each tessellation $G$ the dual graph $G^{*}$ gives rise to a tessellation. Moreover, these two graphs have the same corner curvature with respect to the canonical bijection. The vertex curvature in $G$ translate into a face curvature in $G^{*}$ and vice versa.

\section{Geometry}\label{s:geometry}
\subsection{Gau\ss-Bonnet Theorem}

In Riemannian geometry the Gau\ss-Bonnet Theorem is a link between the geometry  and topology of a surface. In particular, it relates the curvature of a surface to its Euler characteristic. An analogous theorem holds true in the discrete setting.
Various versions of the theorem below are found in \cite{BP1,Chen,CC,DM,K2}.

For a surface $\mathcal{S}$ the genus $g$ is defined to be the largest number of nonintersecting simple closed curves that can be drawn on the surface without separating it. It can be thought as the number of holes in a surface. The Euler characteristic $\chi(\mathcal{S})$ of $\mathcal{S}$ is given as
\begin{align*}
    \chi(\mathcal{S})=2-2g.
\end{align*}

\begin{thm}[Gau\ss-Bonnet Theorem]\label{t:GaussBonnet} Let $G=(V,E,F)$ be a tessellation embedded locally compactly in a compact oriented surface $\mathcal{S}$ of finite genus. Then,
\begin{align*}
    \sum_{v\in V}\ka(v)=\chi(\mathcal{S}).
\end{align*}
\end{thm}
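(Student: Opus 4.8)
The plan is to reduce the statement to Euler's polyhedron formula together with two elementary double-counting identities. First I would observe that the hypotheses force finiteness: since $\mathcal{S}$ is compact and the embedding is locally compact, only finitely many edges can meet the compact set $\mathcal{S}$, so $E$ is finite; local finiteness of vertices together with axiom (T1) then makes $V$ and $F$ finite as well. Thus every sum that appears below is a finite sum and may be rearranged freely.

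Next I would establish that $G$, viewed through its embedding, is a finite CW decomposition of $\mathcal{S}$: the vertices are the $0$-cells, the open edges are the $1$-cells, and by axiom (T3) each face is homeomorphic to a closed disc, so the face interiors are the $2$-cells, attached along their boundary walks; axioms (T1) and (T2) guarantee that the closed cells fit together without overlaps or other pathologies. Consequently the combinatorial Euler characteristic agrees with the topological one, i.e.
\begin{align*}
    |V| - |E| + |F| = \chi(\mathcal{S}) = 2 - 2g.
\end{align*}
I would invoke this as a standard fact, pointing to the references cited above. I would also note that the identity $\#\{\text{boundary edges of }f\} = \#\{\text{boundary vertices of }f\} = \deg(f)$ is itself a consequence of (T3): the boundary of a disc-face is a closed cycle, so edges and vertices alternate along it.

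Then I would expand the curvature sum directly:
\begin{align*}
    \sum_{v\in V}\ka(v)
    = \sum_{v\in V}\Big(1-\frac{d_{v}}{2}+\sum_{f\in F,\ v\in f}\frac{1}{\deg(f)}\Big)
    = |V| - \frac{1}{2}\sum_{v\in V}d_{v} + \sum_{(v,f)\in C(G)}\frac{1}{\deg(f)}.
\end{align*}
The handshaking lemma gives $\sum_{v\in V} d_{v} = 2|E|$, since each edge is counted once at each of its two endpoints. For the last term I would reorganize the sum over the corner set $C(G)$ by grouping according to faces: each face $f$ contributes $1/\deg(f)$ once for every one of its boundary vertices, of which there are $\deg(f)$, so the inner sum over a fixed $f$ equals $1$, and hence $\sum_{(v,f)\in C(G)} 1/\deg(f) = |F|$. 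Substituting the three identities yields $\sum_{v\in V}\ka(v) = |V| - |E| + |F| = \chi(\mathcal{S})$.

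The main obstacle is really the topological step: checking carefully that axioms (T1)--(T3), together with local compactness on a compact surface, produce a genuine finite CW structure so that Euler's formula is legitimately available; once that is granted, the remainder is pure combinatorial bookkeeping. (Orientability is not needed for the count, since $|V|-|E|+|F|=\chi(\mathcal{S})$ holds for any closed surface; it is recorded here merely because it is part of the standing hypotheses of the chapter.)
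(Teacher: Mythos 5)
Your proposal is correct and follows essentially the same route as the paper: expand the curvature sum, double-count corners once by vertices and once by faces to reduce everything to $|V|-|E|+|F|$, and then invoke the Poincar\'e formula $|V|-|E|+|F|=\chi(\mathcal{S})$. The only divergence is in how that last identity is justified --- you appeal to the standard fact that the tessellation gives a finite CW decomposition of $\mathcal{S}$, whereas the paper sketches a direct induction over the genus by cutting along a separating edge path and gluing in two polygons; both are legitimate, and your remark that orientability is not actually needed for the count is a fair observation.
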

\begin{proof}[Sketch of the proof] Since the embedding is locally compact, the tessellation $G$ embedded in a compact surface is finite. We calculate
\begin{align*}
    \sum_{v\in V}\ka(v)=&  \sum_{v\in V}\sum_{(v,f)\in C(G)}\Big(\frac{1}{d_{v}}-\frac{1}{2}+\frac{1}{\deg(f)}\Big).
\end{align*}
Using  $\#\{(v,f)\in C(G)\}=d_{v}$ for  fixed $v\in V$, we arrive at
\begin{align*}
    \ldots=&|V|-\frac{1}{2}\sum_{v\in V}d_{v}+\sum_{f\in F}\sum_{(v,f)\in C(G)}\frac{1}{\deg(f)}.
\end{align*}
Now, employing $\sum_{v\in V}d_{v}=2|E|$ and $\#\{(v,f)\in C(G)\}=\deg(f)$ for  fixed $f\in F$, yields
\begin{align*}
    \ldots=&|V|-|E|+|F|.
\end{align*}
The statement now follows from the identity $|V|-|E|+|F|=\chi(\mathcal{S})$ which is known as Poincar\'e formula. This formula is proven by induction over $g$. The base case $g=0$ is known as the Euler-Descartes formula. The induction step for a tessellation $G_{g}=(V_{g},E_{g},F_{g})$ embedded in a  surface $\mathcal{S}_{g}$ of genus $g$ works by cutting along a path of edges that separates the surface. Say the length of the path is $p$. The cut surface has now a boundary consisting of two paths of edges of length $p$. In each of these boundary paths one glues a  polygon of degree $p$ to  ``close up'' the surface in order to get a surface of genus $g-1$. For this surface we know the formula by the induction hypothesis. Now by cutting and gluing, we increased the number of vertices by $p$, we also increased the number of edges also by $p$ and we increased the number of faces by $2$. This results in the formula $|V_{g}|-|E_{g}|+|F_{g}|=\chi\mathcal({S}_{g})$.
\end{proof}

\subsection{Approximating flat and infinite curvature}

From the definition of the curvature it follows that $\ka$ takes only values in $\Q$. So a question is which values can actually be obtained or approximated. Here, we discuss that $\ka$ is bounded from above but not from below. Moreover, we find that flat curvature can be approximated from above but not from below.

Clearly, the minimal vertex and face degree is $3$ by assumption. Hence, the maximal value that can be assumed by $\ka$ is $3/2$.

On the other hand, $\ka$ is not necessarily bounded from below as it can be seen by the next proposition.
\begin{pro}\label{p:ka_vs_deg} Let $G=(V,E,F)$ be a tessellation. Then, for all $v\in V$
\begin{align*}
-\frac{d_{v}}{2} \leq \ka(v)\leq 1-\frac{d_{v}}{6}.
\end{align*}
In particular, $\ka(v)\leq 0$ if $d_{v}\ge 6$ and
for $(v_{n})$ we have $d_{v_n}\to\infty$ if and only if $\ka(v_{n})\to-\infty$ for $n\to\infty$.
\end{pro}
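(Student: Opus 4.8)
The plan is to estimate the sum $\sum_{f \in F, v \in f} \frac{1}{\deg(f)}$ appearing in the definition of $\ka(v)$ by bounding each term from above and below, exploiting that there are exactly $d_v$ faces incident to $v$ (counted with corners) and that every face has degree at least $3$.

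First I would recall that for a fixed vertex $v$, the number of corners at $v$ is $d_v$, i.e. $\#\{f \in F : (v,f) \in C(G)\} = d_v$. For the upper bound, since $\deg(f) \ge 3$ for every face, each of the $d_v$ terms $\frac{1}{\deg(f)}$ is at most $\frac13$, so
\begin{align*}
    \ka(v) = 1 - \frac{d_v}{2} + \sum_{f \in F, v \in f} \frac{1}{\deg(f)} \le 1 - \frac{d_v}{2} + \frac{d_v}{3} = 1 - \frac{d_v}{6}.
\end{align*}
For the lower bound, every term $\frac{1}{\deg(f)}$ is strictly positive, so dropping the sum gives $\ka(v) \ge 1 - \frac{d_v}{2} \ge -\frac{d_v}{2}$, which is the claimed left-hand inequality. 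This already shows $\ka(v) \le 0$ whenever $d_v \ge 6$, since then $1 - d_v/6 \le 0$.

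For the final equivalence, I would read it off directly from the two-sided bound. If $d_{v_n} \to \infty$, then $\ka(v_n) \le 1 - d_{v_n}/6 \to -\infty$. Conversely, if $\ka(v_n) \to -\infty$, then from $-d_{v_n}/2 \le \ka(v_n)$ we get $d_{v_n} \ge -2\ka(v_n) \to \infty$. So the two conditions are equivalent.

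I do not expect any genuine obstacle here: the proposition is essentially an exercise in using $\deg(f) \ge 3$ together with the corner-counting identity from the sketch of the Gau\ss-Bonnet Theorem. The only point requiring a word of care is making sure the number of faces incident to $v$ is exactly $d_v$ (rather than, say, being counted with multiplicity differently) — this uses property (T2), which guarantees that going around $v$ one meets $d_v$ distinct corners, one per edge-pair, so the sum $\sum_{f \ni v} \frac{1}{\deg(f)}$ indeed has $d_v$ summands as used above.
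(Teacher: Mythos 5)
Your proof is correct and follows exactly the paper's (very brief) argument: the lower bound is immediate from dropping the nonnegative sum $\sum_{f\ni v}1/\deg(f)$, and the upper bound uses $\deg(f)\ge 3$ together with the fact that the number of corners at $v$ equals $d_v$. The deduction of the equivalence $d_{v_n}\to\infty\iff\ka(v_n)\to-\infty$ from the two-sided bound is likewise the intended reading.
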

\begin{proof} The lower bound is immediate and the upper bound follows as $\deg(f)\ge3$ for all $f\in F$ and the number of corners about a vertex equals the vertex degree.
\end{proof}

Let us now turn to flat curvature. We discuss that the value zero can be approximated from below but not from above.

For positive curvature we consider so called prisms or antiprisms, see Figure~\ref{f:prism}. In the figure, the unbounded face outside arises from a face in the sphere.

A \emph{prism} is a tessellation of the sphere. It is given by two polygons of degree $p$ that are surrounded by $p$ squares. In particular, each vertex is adjacent to two squares and one of the two $p$-gons. The curvature  of any vertex $v$ is given by
\begin{align*}
    \ka(v)=1-\frac{3}{2}+\Big(\frac{1}{p}+2\frac{1}{4}\Big) =\frac{1}{p}.
\end{align*}

Similarly, an \emph{antiprism} consists of two $p$-gons that have triangles glued along the boundary such that in the resulting tessellation  every vertex is adjacent to three triangles and one $p$-gon.
 The curvature of any vertex  $v$ is given by
\begin{align*}
    \ka(v)=1-\frac{4}{2}+\Big(\frac{1}{p}+3\frac{1}{3}\Big)=\frac{1}{p}.
\end{align*}
\begin{figure}[!h]
\scalebox{0.4}{\includegraphics{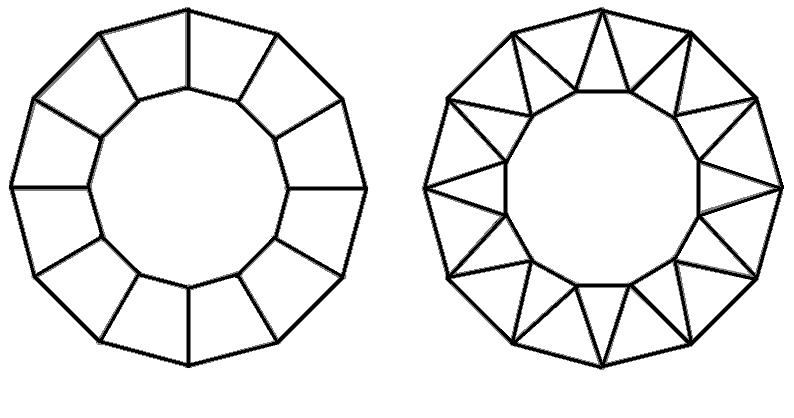}}
\caption{\label{f:tilings}A prism and an antiprism projected from the sphere to the Euclidian plane.}\label{f:prism}
\end{figure}

In the case when $p$ tends to $\infty$, the curvature of prisms and antiprisms tends to $0$ from above.

In the case of negative curvature there is  a uniform lower bound which was shown by Higuchi, \cite{Hi}, via listing all essential cases.

\begin{thm}\label{t:Higuchi}\cite[Proposition~2.1.]{Hi} Let $G$ be a tessellation. If $\ka<0$, then $\ka\le -1/1806$. This maximum is can be assumed at a vertex $v$ with $d_{v}\ge3$ and which is adjacent to faces $f_{1},f_{2}, f_{3}$ such that $(|f_{1}|,|f_{2}|, |f_{3}|)=(3,7,43)$.
\end{thm}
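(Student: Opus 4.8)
The plan is to prove the lower bound $\ka(v) \ge -1/1806$ by a finite case analysis, exploiting the fact that negativity of $\ka(v)$ forces the vertex configuration into a very constrained combinatorial regime. First I would reduce to the decisive situation: by Proposition~\ref{p:ka_vs_deg} we have $\ka(v) \le 1 - d_v/6$, so if $d_v \ge 6$ then $\ka(v) \le 0$ and, moreover, a little extra bookkeeping shows that for $d_v \ge 7$ we are far from the extremal value, while $d_v = 6$ with all incident faces triangles gives exactly $\ka(v) = 0$ and any larger face makes $\ka(v)$ strictly positive contribution impossible. The upshot (which is the classical observation of Higuchi) is that a vertex of negative curvature which is closest to zero must have $d_v = 3$: indeed if $d_v \ge 4$ one checks the face degrees cannot be pushed large enough to make $\ka(v)$ both negative and larger than $-1/1806$.

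With $d_v = 3$ and incident face degrees $p = |f_1| \le q = |f_2| \le r = |f_3|$, the curvature is
\begin{align*}
    \ka(v) = 1 - \frac{3}{2} + \frac{1}{p} + \frac{1}{q} + \frac{1}{r} = -\frac12 + \frac1p + \frac1q + \frac1r .
\end{align*}
Negativity means $\frac1p + \frac1q + \frac1r < \frac12$, and we want to maximise this sum subject to that strict inequality and $3 \le p \le q \le r$. This is a classical "Egyptian fraction" extremal problem, solved greedily: to get the sum just below $1/2$ one takes $p$ as small as possible, then $q$ as small as possible, then $r$ as small as possible. Taking $p = 3$ forces $\frac1q + \frac1r < \frac16$, so $q \ge 7$; taking $q = 7$ forces $\frac1r < \frac16 - \frac17 = \frac1{42}$, so $r \ge 43$; and $r = 43$ gives $\ka(v) = -\frac12 + \frac13 + \frac17 + \frac1{43} = -\frac1{1806}$. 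Any other admissible triple $(p,q,r)$ is checked to give a strictly smaller value: if $p = 3, q = 7, r \ge 44$ the sum drops; if $p = 3, q \ge 8$ then $\frac1q + \frac1r \le \frac14 < \frac16$ already loses more than the gap between $\frac1{42}$ and $\frac1{43}$; if $p \ge 4$ then $\frac1p + \frac1q + \frac1r \le \frac34$ and the best case $p=4,q=5,r=5$ gives $\frac1{20} < \frac1{1806}$ in absolute value only after a short comparison — all these are finitely many inequalities among small rationals.

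The only genuine subtlety is confirming that every such locally-defined configuration is actually realised, or at least consistent, in an honest tessellation, so that the bound is sharp; here one simply exhibits the vertex of type $(3,7,43)$ in a suitable (negatively curved) tessellation, which is where the phrase "listing all essential cases" in Higuchi's original argument does its work. I expect the main obstacle to be purely organisational: ensuring the case distinction on $d_v$ (and, for $d_v = 3$, on the ordered triple of face degrees) is genuinely exhaustive, in particular not overlooking configurations with $d_v = 4$ or $5$ whose face degrees could in principle be tuned to land near $-1/1806$. Once one writes down that for $d_v = 4$ the curvature is $-1 + \sum_{i=1}^4 1/|f_i|$ and for $d_v = 5$ it is $-3/2 + \sum_{i=1}^5 1/|f_i|$, a quick estimate shows the attainable negative values are all comfortably below $-1/1806$ in absolute value, so the three-regular case indeed dominates. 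No deep input is needed beyond Proposition~\ref{p:ka_vs_deg} and elementary arithmetic with unit fractions.
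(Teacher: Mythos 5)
The paper offers no proof of this statement: it is quoted from Higuchi with the remark that it is proven ``via listing all essential cases'', which is exactly the strategy you adopt, so in approach you are aligned with the source. Your skeleton is correct: Proposition~\ref{p:ka_vs_deg} kills $d_v\ge 7$ outright, the cases $d_v=4,5,6$ have extremal negative values $-1/156$, $-1/42$, $-1/12$ respectively (all well below $-1/1806$), and the greedy analysis for $d_v=3$, $p=3$, $q=7$, $r=43$ yields $-\frac12+\frac13+\frac17+\frac1{43}=-\frac{1}{1806}$ as claimed.

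However, two of your case dismissals are wrong as written, and since the entire content of the theorem is the exhaustiveness of the case check, they need repair. First, for $d_v=3$, $p=3$, $q\ge 8$ you assert $\frac1q+\frac1r\le\frac14<\frac16$, which is false ($\frac14>\frac16$) and in any case the crude bound $\frac1q+\frac1r\le\frac2q$ only settles the matter for $q\ge 13$ (where $\frac{2}{13}<\frac17+\frac1{43}=\frac{50}{301}$); the values $q=8,\dots,12$ must be checked individually, e.g.\ $(3,8,25)$ gives $\ka=-\frac1{600}$ and $(3,9,19)$ gives $\ka=-\frac1{342}$, both below $-\frac1{1806}$ but not by the margin your inequality suggests. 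Second, your ``best case'' for $p\ge4$, namely $(4,5,5)$, is not in the feasible set at all: it has $\ka=-\frac12+\frac14+\frac15+\frac15=+\frac{3}{20}>0$. The genuine extremal negative configuration with $p=4$ is $(4,5,21)$, giving $\ka=-\frac1{20}+\frac1{21}=-\frac1{420}$, which is still below $-\frac1{1806}$ but close enough that the case cannot be waved away with a rough estimate. With these two subcases redone honestly (and the $(3,7,43)$ vertex exhibited in an actual tessellation for sharpness, as you note), the argument is complete.
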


The theorem implies that flat curvature can not be approximated from below.

\subsection{Finiteness}

Next, we turn to question how the sign of the curvature determines finiteness or infiniteness of the tessellation. A rough synopsis is that positive  curvature implies finiteness and in the case of planar tessellations non-positive curvature implies infiniteness.

In Riemannian geometry a theorem that positive curvature implies compactness of the manifold goes back to Myers, \cite{Myers}. For graphs this question was first studied in \cite{S1,S2} and came up again later as Higuchi's conjecture \cite[Conjecture~3.2.]{Hi}. A definite answer was given by DeVos/Mohar \cite[Theorem 1.7]{DM} after first steps were taken in \cite{CC,SY}.

\begin{thm}[Myer's theorem for tessellations, \cite{DM}] Let $G=(V,E,F)$ be a tessellation embedded locally compactly in a surface such that $\ka>0$. Then, the graph is finite.
\end{thm}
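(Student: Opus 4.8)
The plan is to prove the contrapositive via a compactness/combinatorial dichotomy: if the tessellation were infinite, then since the embedding is locally compact and the graph connected and locally finite (Proposition~\ref{p:ka_vs_deg} bounds degrees wherever curvature is bounded, but more to the point local compactness forces local finiteness as remarked in the introduction), one can use a Gau{\ss}--Bonnet--type accounting on exhausting finite subcomplexes together with Higuchi's uniform gap (Theorem~\ref{t:Higuchi} and its counterpart for nonnegative curvature) to derive a contradiction. Concretely, first I would observe that $\ka>0$ together with the fact that $\ka$ takes values in $\Q$ with denominators controlled near any vertex does \emph{not} immediately give a uniform positive lower bound on $\ka$ (prisms and antiprisms show $\ka$ can be as small as $1/p$), so the naive ``$\sum_v \ka(v)=\infty$ contradicts $\chi(\mathcal S)\le 2$'' argument fails and a finer analysis is needed.

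The real engine is the discrete Gau{\ss}--Bonnet formula applied to \emph{finite} tessellations-with-boundary obtained by cutting the infinite tessellation along a suitable closed curve, combined with a linear isoperimetric estimate. For a finite polygonal patch $P$ with boundary, a boundary version of Theorem~\ref{t:GaussBonnet} reads $\sum_{v\in \mathrm{int}(P)}\ka(v) + (\text{boundary terms}) = \chi(\text{disc})=1$; the boundary terms are controlled by the combinatorial length of $\partial P$. The key step is then to show that positive curvature forces the boundary spheres $S_r$ to shrink: one establishes that for each $r$ the ``curvature mass'' $\sum_{v\in B_r}\ka(v)$ is bounded below by a positive quantity growing with $|S_r|$ (or with the number of vertices in $B_r$ not yet ``saturated''), while Gau{\ss}--Bonnet caps the total. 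This is the DeVos--Mohar strategy: use a careful \emph{discharging} argument to show that the average of $\ka$ over any ball is bounded below by an absolute constant $c>0$, after which $|V|\le \chi(\mathcal S)/c < \infty$ follows immediately.

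The main obstacle — and the heart of the DeVos--Mohar proof — is precisely establishing that discharging bound: one must redistribute curvature from high-curvature vertices (degree-$3$ vertices surrounded by triangles, say, with $\ka$ up to $1/2$ or, at extreme, the tetrahedron-type $\ka=3/2$) to the problematic low-positive-curvature vertices (the $1/p$ prism/antiprism vertices and their relatives) so as to witness a uniform lower bound $c$ on the ball-averaged curvature. The combinatorics here is delicate because the ``bad'' configurations (long strips of squares or triangles, as in prisms) can be arbitrarily long, and one must argue that such a strip, in an \emph{infinite} tessellation, must eventually terminate in a region of genuinely large positive curvature that pays for the whole strip; bounding how far this payment has to travel is the crux.

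\textbf{Approach I would take, step by step.} First I would reduce to the planar case: if $\mathcal S$ has genus $g\ge 1$ then $\chi(\mathcal S)\le 0$, yet $\ka>0$ would force $\sum_v\ka(v)\le\chi(\mathcal S)\le 0$ once we knew the sum makes sense, so the genus-zero (spherical, or infinite planar) case is the only one to treat, and in the spherical case finiteness is automatic from local compactness; hence the only real case is an infinite planar tessellation with $\ka>0$, and the goal is to derive a contradiction. Second, I would fix a basepoint $o$ and, for large $r$, consider the finite subcomplex $G_r$ induced on $B_r$, completed to a tessellation-with-boundary; apply the boundary Gau{\ss}--Bonnet identity to get $\sum_{v\in B_{r-1}}\ka(v)\le 1 + (\text{boundary defect along }S_r)$, where the boundary defect is $O(|S_r|)$. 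Third — the hard combinatorial core — I would run the DeVos--Mohar discharging scheme to prove there is an absolute $c>0$ with $\sum_{v\in B_r}\ka(v)\ge c\,|B_r|$ (or at least $\ge c\,|S_r| + c$), using that in an infinite tessellation every ``flat or nearly flat'' local pattern must be bordered by vertices of substantial positive curvature that can be charged. Finally, combining $c|B_r|\le \sum_{v\in B_r}\ka(v)\le 1 + O(|S_r|)$ and $|S_r|\le |B_r|$ forces $|B_r|$ bounded, hence $V=\bigcup_r B_r$ finite, contradicting infiniteness. I expect essentially all the difficulty to be concentrated in that third step — the explicit case analysis of local configurations and the bookkeeping that shows the charge never has to travel more than a bounded distance — which is exactly why \cite{DM} is the definitive reference and why earlier partial results \cite{CC,SY} only handled special cases.
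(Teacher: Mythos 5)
Your overall strategy is the DeVos--Mohar one that the paper also sketches (Gau{\ss}--Bonnet plus a uniform lower bound on the total curvature in terms of the number of vertices), but the quantitative claim on which your third step rests is not correct as stated, and the missing ingredient is exactly the structural lemma the paper's sketch singles out. You assert that a discharging argument yields an absolute constant $c>0$ with $\sum_{v\in B_r}\ka(v)\ge c\,\#B_r$, the charge never having to travel more than a bounded distance. The prism and antiprism families show that no such absolute constant exists for positively curved tessellations in general: their total curvature is $2$ while the number of vertices is $2p$, so the average is $1/p\to 0$, and the middle of a long band of squares (or triangles) flanked by two large faces is locally indistinguishable from the middle of a prism, so a purely local redistribution cannot produce a uniform $c$. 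You acknowledge this tension (``bounding how far this payment has to travel is the crux'') but offer no mechanism to resolve it; your suggestion that in an infinite tessellation such a band ``must eventually terminate in a region of genuinely large positive curvature that pays for the whole strip'' would make the travel distance proportional to the length of the band, i.e.\ unbounded, which destroys the absolute constant you need.

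The actual resolution, which the paper records, is a dichotomy rather than a single estimate: DeVos and Mohar first prove that two \emph{big} faces (degree larger than $42$) cannot be close to one another --- in particular, if an edge joins two big faces by its end vertices then the whole graph is a prism or an antiprism, hence finite --- and only then, for tessellations that are neither prisms nor antiprisms, does the separation of big faces make the discharging local and yield $\sum_{v}\ka(v)\ge c\,\#V$, whence $\#V$ is bounded. Without first splitting off the prism/antiprism family and establishing the big-face separation lemma, your step three fails; with them, your outline essentially coincides with the paper's. (Your preliminary reductions are fine: compactness of $\mathcal{S}$ together with local compactness of the embedding gives finiteness directly, and Theorem~\ref{t:GaussBonnet} rules out compact surfaces of positive genus.)
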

The arguments of the proof use an asymptotic Gau\ss-Bonnet formula for infinite graphs with non-negative curvature. Furthermore, it is then shown that \emph{big faces}  of degree larger than 42 can not be to close. In particular, if an edge connects two big faces by its end vertices then the graph is a prism or antiprism which  is  essentially proven by listing all possible cases. Then, a lower bound on the sum of the curvature is derived in terms of the number of vertices for graphs which are neither prisms or antiprisms.

The upper bound for the number of vertices of a graph with positive curvature given in \cite{DM} for graphs that are not prisms or antiprisms is $3444$. This was later improved by  Zhang \cite{Zh} to 580 vertices while  the largest known graphs with positive curvature has 208 vertices    and was constructed by Nicholson and Sneddon \cite{NS}.

Next, we turn to results which complement the result above. An interpretation of the theorem below is that non-positive curvature implies that a planar tessellation is infinite.

\begin{thm} Let $G=(V,E,F)$ be a tessellation embedded locally compactly in $\mathbb{S}^{2}$. Then, the graph admits some positive curvature.
\end{thm}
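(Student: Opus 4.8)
The plan is to read the statement off the Gau\ss-Bonnet Theorem (Theorem~\ref{t:GaussBonnet}). First I would record the two ingredients it requires. On the topological side, $\mathbb{S}^{2}$ is a compact oriented surface of genus $g=0$, so $\chi(\mathbb{S}^{2})=2-2g=2$. On the combinatorial side, since the embedding is locally compact and the ambient surface $\mathbb{S}^{2}$ is itself compact, applying the local compactness hypothesis to the compact set $K=\mathbb{S}^{2}$ shows that only finitely many edges occur; hence $G$ is a finite tessellation and the vertex sum below is an honest finite sum.

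With these in hand, Theorem~\ref{t:GaussBonnet} applied to $\mathcal{S}=\mathbb{S}^{2}$ gives
\[
  \sum_{v\in V}\ka(v)=\chi(\mathbb{S}^{2})=2.
\]
The conclusion is now a sign argument: were $\ka(v)\le 0$ for every $v\in V$, the left-hand side would be non-positive, contradicting that it equals $2$; hence some vertex $v_{0}$ satisfies $\ka(v_{0})>0$, which is the assertion. This is also what underlies the interpretation stated just before the theorem, since a finite planar tessellation compactifies to a spherical one.

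If one wants to be quantitative, a little more bookkeeping is available: splitting the Gau\ss-Bonnet sum into its positive and its non-positive parts yields $\sum_{\ka(v)>0}\ka(v)\ge 2$, while Proposition~\ref{p:ka_vs_deg} bounds $\ka(v)\le 1-d_{v}/6\le \tfrac12$ for every vertex, and together these force at least four vertices of positive curvature --- a bound realized by the tetrahedron. I do not expect any real obstacle here: all the substance has already been spent on the Gau\ss-Bonnet Theorem and on the observation that local compactness renders a tessellation of a compact surface finite, so the present statement is a one-line corollary of those facts.
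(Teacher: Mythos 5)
Your argument is exactly the paper's: the paper proves this theorem by noting that it "follows directly from the Gau\ss-Bonnet theorem," which is precisely your computation $\sum_{v\in V}\ka(v)=\chi(\mathbb{S}^{2})=2>0$ together with the observation that local compactness of the embedding in the compact surface forces finiteness. Your additional quantitative remark (at least four vertices of positive curvature, via $\ka(v)\le 1-d_{v}/6\le 1/2$ from Proposition~\ref{p:ka_vs_deg}) is correct and goes slightly beyond what the paper states, but the core proof is the same one-line corollary.
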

\begin{proof} The statement follows directly from the Gau\ss-Bonnet theorem.
\end{proof}

We end this section by  a result of Chen \cite{Chen} that states that an infinite tessellation with non-negative curvature can have at most finitely many vertices with positive curvature.

\begin{thm}\cite[Theorem 3.5.]{Chen} Let $G$ be a tessellation  such that $\ka\ge0$. Then,  the number of vertices with non-vanishing curvature is finite.
\end{thm}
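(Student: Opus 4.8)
The plan is to use the asymptotic/infinite version of the Gau\ss-Bonnet theorem together with the quantitative curvature bounds established earlier. Since $G$ is an infinite tessellation, it embeds locally compactly in a non-compact surface; after passing to the surface with the non-vertex singularities removed (as discussed after (T3)), we may assume $G$ is planar, i.e.\ embedded in $\R^2$. The key input is an inequality of the form $\sum_{v\in B_r}\ka(v) \le \chi(\mathcal{S}) = $ (a constant, in fact $\le 1$ for a planar end, or more robustly $\le 2$), valid for all exhausting balls $B_r$, with the ``missing'' curvature accounted for by the boundary spheres; this is precisely the asymptotic Gau\ss-Bonnet estimate already invoked in the sketch of Myers' theorem above. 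Concretely, one shows $\sum_{v\in B_r}\ka(v) \le 1 - \frac12(\text{number of boundary edges leaving }B_r) + (\text{positive boundary terms bounded by the number of boundary vertices})$, and the hypothesis $\ka\ge 0$ forces the left side to be non-decreasing in $r$, hence bounded.

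With boundedness of $\sum_{v\in V}\ka(v)$ in hand, I would split the vertex set into $V_0 = \{v : \ka(v) = 0\}$ and $V_+ = \{v : \ka(v) > 0\}$ (there are no vertices of negative curvature by hypothesis). The sum $\sum_{v\in V_+}\ka(v)$ converges. Now comes the crucial discreteness point: the values of $\ka$ lie in $\Q$ and, more importantly, positive values of $\ka$ are uniformly bounded below by a positive constant. Indeed, by Higuchi's theorem (Theorem~\ref{t:Higuchi}) a vertex with $\ka(v) \ne 0$ and $\ka(v)$ not negative — wait, Higuchi handles the negative case; for the positive case one needs the analogous elementary observation that if $\ka(v) > 0$ then $\ka(v) \ge c$ for some universal $c > 0$. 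This follows because $\ka(v) > 0$ forces $d_v \le 5$ (by Proposition~\ref{p:ka_vs_deg}, $\ka(v) \le 1 - d_v/6$, so $d_v \ge 6 \Rightarrow \ka(v) \le 0$), and then $\ka(v) = 1 - d_v/2 + \sum_{f\ni v} 1/\deg(f)$ is a rational number whose denominator is bounded in terms of the finitely many face degrees that can occur at a low-degree vertex with positive curvature; the smallest positive such value is explicit and positive. Combining: $\#V_+ \cdot c \le \sum_{v\in V_+}\ka(v) < \infty$, hence $V_+$ is finite.

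The main obstacle is establishing the uniform positive lower bound $c$ on positive curvature values rigorously — one must argue that although face degrees at $v$ can in principle be arbitrarily large, making $\sum 1/\deg(f)$ approach $d_v/2 - 1$ from above, the rational value $\ka(v)$ cannot be positive yet arbitrarily small. A clean way is to note $2\pi\ka(v) = 2\pi - \sum_{f\ni v}\beta(f)$ with $\beta(f) = \pi(\deg(f)-2)/\deg(f) \ge \pi/3$, and analyze the finitely many combinatorial types $(d_v; \deg(f_1),\dots,\deg(f_{d_v}))$ with $d_v \le 5$ that yield a positive but not-too-large defect — in fact a short case analysis shows the minimal positive value of $\ka(v)$ is $1/1806$ as well (this is the ``dual'' side of Higuchi's computation, achieved at $(3,7,43)$), so one may simply cite Theorem~\ref{t:Higuchi}'s method. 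A secondary subtlety is justifying the asymptotic Gau\ss-Bonnet inequality for the chosen exhaustion; here one should exhaust by balls $B_r$ and carefully bound the boundary contribution, using that each boundary vertex contributes a controlled amount and each ``truncated'' face is counted with the right multiplicity — exactly the bookkeeping already sketched for Theorem~\ref{t:GaussBonnet}, now with an inequality in place of an equality because of the boundary. Once both points are in place the conclusion is immediate.
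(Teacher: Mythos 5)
The overall architecture you propose (finite total curvature plus a positive lower bound on the positive values, hence finitely many positive terms) fails at its second, crucial step, and the paper itself — which only cites Chen and gives no proof — contains the counterexample. You claim that $\ka(v)>0$ forces $\ka(v)\ge c$ for a universal $c>0$, and even that the minimal positive value is $1/1806$, attained at $(3,7,43)$. But a degree-three vertex with face degrees $(3,7,43)$ has $\ka(v)=1/3+1/7+1/43-1/2=-1/1806$: that triple is the extremal \emph{negative} configuration of Theorem~\ref{t:Higuchi}, and Higuchi's bound is genuinely one-sided with no positive counterpart. Indeed a vertex of degree $3$ with incident face degrees $(4,4,p)$ has $\ka(v)=1/p$, as does a vertex of degree $4$ with face degrees $(3,3,3,p)$; these are exactly the prism and antiprism vertices, and the section on approximating flat curvature states explicitly that these values tend to $0$ from above as $p\to\infty$. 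This is not confined to finite spherical examples: surrounding a single $p$-gon by concentric rings of squares gives an infinite planar tessellation with $\ka\ge0$ whose $p$ innermost vertices each have curvature exactly $1/p$ and all other vertices have curvature $0$. So the infimum of positive curvature values is $0$, the inequality $\#V_+\cdot c\le\sum_{v\in V_+}\ka(v)$ is unavailable, and convergence of a sum of positive terms does not by itself bound the number of terms. To salvage the argument one would have to show that a fixed infinite tessellation with $\ka\ge0$ has bounded face degree (so that its own positive values are bounded below), which is essentially of the same depth as the statement being proved and is where Chen's actual structural analysis of large faces does its work.

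There is also a gap in the first step. The bound $\sum_{v\in B_r}\ka(v)\le\chi$ for infinite non-negatively curved graphs is not obtained by rerunning the Gau\ss--Bonnet bookkeeping ``with an inequality'': starting from Lemma~\ref{l:Derksen} and estimating each boundary face's contribution $\#(f\cap W)/\deg(f)<1$ against the at most $\#\partial W$ boundary faces only yields $\sum_{v\in W}\ka(v)<1+\#\partial W/2$, which is useless since $\#\partial W$ grows. The finiteness of the total curvature here is the nontrivial asymptotic Gau\ss--Bonnet theorem of DeVos and Mohar; citing it as a black box is legitimate, but the boundary accounting you sketch does not reproduce it. With that input granted, your argument still does not close because of the missing lower bound described above.
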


\subsection{Absence of cut locus}
In Riemannian geometry the Hadamard-Cartan theorem states that on the universal cover of a complete manifold with non-positive sectional curvature all geodesics can be continued indefinitely.

For graphs such a theorem can be proven under the assumption of non-positive corner curvature. Below we will discuss examples how such a statement may fail for non-positive (or even negative) vertex curvature.

For a vertex $v$ the \emph{cut locus} is the set of vertices, where the  distance function $d(v,\cdot)$ assumes a local maximum. We say the graph has \emph{empty cut locus} if the cut locus of every vertex is empty. This is equivalent to the fact that one can continue every finite geodesic indefinitely.

\begin{thm}\label{t:absence_cut_locus}\cite[Theorem~1]{BP2} Let $G$ be a planar tessellation such that $\ka_{C}\le0$. Then, the graph has empty cut locus.
\end{thm}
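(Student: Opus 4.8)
The plan is to argue by contradiction: suppose some vertex $v$ has nonempty cut locus, i.e., there is a vertex $w$ at which $d(v,\cdot)$ attains a local maximum. I want to extract from such a $w$ a configuration of faces and edges around it forcing the presence of a corner with strictly positive corner curvature, contradicting $\ka_C\le 0$. The key geometric object is the sphere $S_r$ for $r=d(v,w)$ together with how the faces incident to $w$ sit relative to $B_r$ and $B_{r-1}$. Since $w$ is a local maximum of the distance function, every neighbor of $w$ lies in $B_r$ (none is in $S_{r+1}$), and at least one neighbor — in fact I expect a whole ``interval'' of the link of $w$ — lies in $S_{r-1}\cup S_r$ in a way that cannot be spread out too thinly.

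The first step is to set up the local picture at $w$. Order the $d_w$ edges emanating from $w$ cyclically (using the planar embedding), $e_1,\dots,e_{d_w}$, with faces $f_1,\dots,f_{d_w}$ between consecutive edges, so the corners at $w$ are $(w,f_i)$. Classify each edge $e_i$ by whether its other endpoint lies in $S_{r-1}$ or in $S_r$; by the local maximum hypothesis there is at least one edge to $S_{r-1}$ (else $w$ would be a local \emph{minimum} direction only, but more carefully: $w\in S_r$ means it has a neighbor in $S_{r-1}$), and \emph{no} edge goes to $S_{r+1}$. The second step is the crucial combinatorial lemma: look at a ``maximal run'' of consecutive faces $f_i,\dots,f_j$ around $w$ all of whose bounding edges at $w$ go to $S_r$ (a run between two ``inward'' edges to $S_{r-1}$, or the whole star if there is only one inward edge). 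Along such a run one can walk along the far boundary of these faces and stay in $S_r$ or go outward; but because nothing goes to $S_{r+1}$ and each face is a disc meeting $B_{r-1}$ nontrivially (since the whole graph is connected to $v$), each face in the run must ``close up'' quickly. Quantitatively, I would show that a face $f$ in the interior of such a run has $\deg(f)=3$ and both its other edges from the run's vertices also stay on $S_r$, and then a counting of angles — or equivalently a telescoping of corner curvatures $\ka_C(w,f_k)=1/d_w-1/2+1/\deg(f_k)$ along the run — is forced to be positive somewhere. The cleanest formulation: if $d_w\ge 3$ and every face at $w$ were a ``large'' face or the combinatorics spread out, one gets a geodesic escaping to $S_{r+1}$, contradiction; so the run is short, some $\deg(f_k)=3$, and with $d_w$ small this $\ka_C(w,f_k)>0$; while if $d_w$ is large, there must be many edges to $S_{r-1}$, and then two such inward edges bound a common face $f$ with $\deg(f)=3$, a triangle with two vertices in $S_{r-1}$, which again (chasing the two inward neighbors, which are within distance $2$ of each other via $w$ yet both in $S_{r-1}$) produces a positively curved corner at one of those neighbors — so one iterates the argument at a vertex closer to $v$ and descends, eventually reaching $v$ itself or an outright contradiction.

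The main obstacle, and where the real work lies, is the second step: making precise the dichotomy ``either a short run forces a low-degree face at a low-degree vertex (hence positive corner curvature), or many inward edges force a bad triangle and a descent'', and ensuring the descent terminates. One must handle carefully the boundary faces of a run (those adjacent to an inward edge) versus interior faces, and the case $d_w=3$ where a single inward edge leaves the whole star as one run. I expect this to reduce to a finite case analysis of the possible patterns of (inward/outward) labels on the $d_w$ edges at $w$ combined with the face degrees, exactly the kind of ``listing essential cases'' flavor seen in Theorem~\ref{t:Higuchi}. A subtlety worth flagging is that one genuinely needs \emph{corner} curvature $\le 0$ and not merely vertex curvature $\le 0$: the Kagome-type examples in Example~\ref{e:Kagome} have nonpositive vertex curvature but the triangle corners have $\ka_C=1/12>0$, and indeed such triangles are precisely the mechanism that would let a geodesic ``turn around'' and create a local distance maximum; so the hypothesis $\ka_C\le0$ is used exactly to kill every triangle-corner at the offending vertices. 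Finally, once no vertex has a local maximum of $d(v,\cdot)$, every finite geodesic from $v$ extends by one more step, hence indefinitely, giving empty cut locus.
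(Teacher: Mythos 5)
There is a genuine gap. Your write-up is a plan rather than a proof: the entire burden rests on your ``second step,'' and you say yourself that this is ``where the real work lies'' without carrying it out. Moreover, the specific claims you propose to prove there do not hold and cannot be forced by a purely local analysis at the offending vertex $w$. For instance, if $d_w=3$ with one neighbor in $S_{r-1}$ and two in $S_r$, the hypothesis $\ka_C\le0$ only tells you that all three faces at $w$ have degree at least $6$; nothing forces a triangle, a ``short run,'' or any positively curved corner \emph{at $w$}. A face lying between two consecutive edges from $w$ into $S_r$ can be a large polygon whose far boundary wanders along $S_r$ or dips back into $B_{r-1}$, so your assertion that interior faces of a run must have degree $3$ is unjustified and in general false. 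The contradiction with $\ka_C\le0$ is not localized at $w$: it has to be extracted from the global structure of the whole ball $B_r$ (e.g.\ via a Gau\ss--Bonnet/Derksen-type count over $B_r$ as in Lemma~\ref{l:Derksen}, or via an inductive structure theorem for balls). Your proposed ``descent'' to a vertex closer to $v$ is likewise too vague to terminate in a contradiction, since you have not exhibited a quantity that strictly decreases or an invariant that is eventually violated.

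The actual argument (from \cite{BP2}, which the paper only summarizes in one sentence) is a global induction on the radius $r$: one proves that every ball $B_r$ is ``admissible,'' a convexity-type property of the boundary cycle of $B_r$ controlling, for each boundary vertex, how many of its neighbors and incident faces point outward; non-positive corner curvature is exactly what propagates this property from $B_r$ to $B_{r+1}$. Emptiness of the cut locus is then a direct consequence of admissibility, since every boundary vertex of an admissible ball is guaranteed an outward edge. Your instinct about \emph{why} corner curvature (and not vertex curvature) is the right hypothesis --- the positively curved triangle corners in the Kagome-type examples being precisely the mechanism that lets the distance function turn around --- is correct and matches the paper's discussion, but identifying the right hypothesis is not a substitute for the inductive boundary analysis that the proof requires.
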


The proof of the theorem involves an analysis of the boundary structure of distance balls of tessellations with non-positive corner curvature. In \cite{BP2} it is shown that the balls satisfy a subtle notion of convexity which is referred to as admissibility.

The statement of the theorem can easily be seen to fail if one only assumes $\ka\leq0$. An  example is the Kagome lattice of Example~\ref{e:Kagome}. This tessellation satisfies $\ka \leq 0$ but has corners of positive curvature, so, it does not satisfy the assumption of Theorem~\ref{t:absence_cut_locus}. Indeed, for an arbitrary vertex $o$ there are two vertices in $B_{3}$ that have only neighbors in $B_{3}$ -- these are the opposite vertices in the hexagons that contain $o$.

For negative vertex curvature, consider a tessellation with $2p$-gons, $p\ge4$, instead of hexagons as also mentioned in  Example~\ref{e:Kagome}. Then this tessellation satisfies $\ka<0$ but also has corners of positive curvature. Indeed, every vertex $o$ has vertices in $B_{p}$ that have no neighbor in $S_{p+1}$.

\subsection{Volume growth}\label{s:volume}

We turn to implications of the sign of the curvature on the volume growth of planar tessellations. The rough synopsis of the exposition below is that non-negative curvature implies polynomial growth and for negative curvature implies at least exponential growth.

In \cite{HJL} Hua/Jost/Liu showed that a tessellation of non-positive curvature does not grow faster than quadratically.

\begin{thm}\label{t:grow_nonneg_curvature}\cite[Theorem~1.1]{HJL} Let $G$ be a tessellation embedded locally compactly in a surface of finite genus such that $\ka\ge0$. Then, there is a constant $C$ depending only on the maximal face degree such that for all $r\ge0$
\begin{align*}
    \# B_{r}\leq C r^{2}.
\end{align*}
\end{thm}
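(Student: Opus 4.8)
The plan is to exploit the Gau\ss-Bonnet theorem together with a combinatorial bookkeeping of how curvature is distributed across spheres. Since $\ka\ge0$, the key quantitative input is that the total curvature inside any ball is bounded above by a constant (indeed by $\chi(\mathcal{S})$ if the surface is compact, and by a genus-dependent constant in general via the asymptotic Gau\ss-Bonnet formula for non-negatively curved infinite tessellations). This forces the curvature to be ``almost flat'' far from the origin, which in turn forces the sphere sizes to stabilize. Concretely, I would introduce for each $r$ the boundary term $b_r = \#\{\text{edges or corners meeting } S_r \text{ and } S_{r+1}\}$ and relate the change $b_{r+1}-b_r$ to the curvature carried by $S_r$ (or the corners based at $S_r$). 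The heuristic is: where curvature vanishes the sphere ``propagates'' in an affine way so that $b_{r+1} \approx b_r + (\text{small bounded defect})$, and positive curvature can only make spheres \emph{shrink}, never grow faster than linearly.

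First I would set up the combinatorial identity. For a fixed vertex $o$, writing $V_r$, $E_r^{in}$ (edges inside $B_r$), $E_r^{\partial}$ (edges between $S_r$ and $S_{r+1}$), $F_r$ (faces all of whose vertices lie in $B_r$), one gets from Euler's formula applied to the subcomplex $B_r$ a relation between $\#V_r$, $\#E_r$, $\#F_r$ and the number of boundary faces. Summing the corner curvatures of corners based in $B_{r-1}$ (which are ``interior'' corners and contribute full faces) against the Gau\ss-Bonnet computation in the proof of Theorem~\ref{t:GaussBonnet}, the interior part telescopes to $\#V_r - \#E_r^{in} - \#F_r + (\text{boundary corrections involving only } S_r)$. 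Since $\sum_{v\in B_r}\ka(v) = O(1)$ by non-negativity and Gau\ss-Bonnet, one extracts that the boundary quantity $b_r$ (a suitable count of boundary edges/faces at radius $r$, weighted by face degrees) satisfies a bound of the form $b_{r+1}\le b_r + C_0$, where $C_0$ depends only on the maximal face degree: a large face incident to $S_r$ can ``absorb'' several units of sphere but only up to its degree, and non-negative curvature prevents the complementary growth. Iterating gives $b_r \le b_0 + C_0 r \le C_1 r$.

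Next I would convert the boundary-length bound into a volume bound. Since every vertex of $S_{r+1}$ is reached from $S_r$ by an edge, one has $\#S_{r+1} \le b_r$; more carefully, because faces of large degree could in principle let few boundary edges subtend many new vertices, I would argue that each new vertex on $S_{r+1}$ is incident to a face that ``straddles'' $S_r$ and $S_{r+1}$, and each such face contributes at most $\deg(f)-2 \le D-2$ new vertices while consuming at least one unit of $b_r$; hence $\#S_{r+1}\le (D-1)\, b_r \le C_2 r$. Summing over radii yields $\#B_r \le \sum_{s\le r} C_2 s \le C r^2$ with $C = C(D)$ as claimed.

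The main obstacle I expect is the bookkeeping at the boundary of $B_r$: the subcomplex $B_r$ is not itself a tessellation (its outer ``face'' need not be a disc, faces may be only partially included, and a single large face may meet $S_r$ in several arcs), so the telescoping of corner curvatures leaves boundary correction terms that must be shown to be controlled linearly by $b_r$ and hence not to spoil the recursion. Handling big faces — which are exactly the obstruction in the finiteness theorems quoted above and here control the constant $C$ — is the delicate point: one needs that a face of degree $D$ meeting the annulus between $S_r$ and $S_{r+1}$ contributes boundedly (in terms of $D$) to the growth of $b_r$, using crucially that $\ka\ge0$ to rule out a cascade of such faces accelerating the growth. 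Once this local combinatorial lemma is in place, the rest is the summation above.
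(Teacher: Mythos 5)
Your proposal goes a genuinely different route from the paper: the text follows Hua/Jost/Liu, whose proof realizes each face as a regular polygon so that the tessellation embeds canonically into an Alexandrov space of non-negative curvature, and then imports the Bishop--Gromov volume comparison from that setting; no combinatorial sphere recursion is used. A purely combinatorial argument of the kind you sketch would be of independent interest, but as written it has a genuine gap at its central step. Everything hinges on the claim that the boundary quantity satisfies an \emph{additive} bound $b_{r+1}\le b_r+C_0$. The natural way to extract this from Gau\ss--Bonnet is Lemma~\ref{l:Derksen}: applying it to $B_{r+1}$ and $B_r$ and subtracting, non-negativity of $\ka$ gives
\begin{align*}
\#\partial B_{r+1}-\#\partial B_r\le 2\Big(\sum_{f\in \mathrm{bd}(B_{r+1})}\tfrac{\#(f\cap B_{r+1})}{\deg(f)}-\sum_{f\in \mathrm{bd}(B_r)}\tfrac{\#(f\cap B_r)}{\deg(f)}\Big),
\end{align*}
and the right-hand side is a difference of two sums each of order $\#S_{r+1}$. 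Bounding it crudely (each boundary face contributes less than $1$, each boundary face contains a vertex of the sphere, $d_v\le 6$ by Proposition~\ref{p:ka_vs_deg}) yields only a \emph{multiplicative} estimate $\#\partial B_{r+1}\le C\,\#\partial B_r$, i.e.\ exponential rather than linear growth of spheres. To get the additive bound you must exhibit near-perfect cancellation between the faces leaving the boundary (becoming interior to $B_{r+1}$) and the faces entering it; in this survey's framework that cancellation is exactly what the ball-boundary structure analysis of \cite{BP2,KP} (admissibility of balls, empty cut locus) provides, and those tools are established only under $\ka_C\le 0$, not under $\ka\ge 0$. You flag this as ``the delicate point'' but offer no argument for it, and it is not a technicality --- it is the theorem.

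Two further issues you would need to address even granting that lemma: Lemma~\ref{l:Derksen} requires $W$ simply connected (in particular $V\setminus B_r$ connected), which can fail for balls, and it is stated for planar tessellations, whereas the theorem allows finite genus, so the constant term $1$ must be replaced by a genus-dependent quantity. Both are repairable, but they belong in the bookkeeping. The final conversion step $\#S_{r+1}\le b_r$ and the summation $\#B_r\le\sum_{s\le r}C_2 s\le Cr^2$ are fine.
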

The strategy of the proof is to relate volume growth of the tessellation to the one of the corresponding Alexandrov space in which it is canonically embedded.

Next, we turn to the case of negative curvature.
We first discuss lower bounds on the volume growth that are proven in \cite{BP1}.

\begin{thm}\cite[Corollary 5.2.]{BP1} Let $G=(V,E,F) $ be a planar tessellation that has empty cut locus and satisfies $\ka\leq -k<~0$. Then,
$$\#B_{r}\geq (1+2Ck)^{r},$$
where $C=p/(p-1)$ if $p=\sup_{f\in F}\deg(f)<\infty$ and $C=1$ otherwise. 
\end{thm}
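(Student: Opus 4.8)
The plan is to exploit the combination of empty cut locus and negative vertex curvature to obtain a multiplicative lower bound on the sphere sizes $\#S_r$, which then integrates to exponential growth of $\#B_r$. Fix the base vertex $o$ and write $s_r=\#S_r$, $b_r=\#B_r$. The key geometric input from empty cut locus is that every vertex in $S_r$ has at least one neighbor in $S_{r+1}$ (otherwise it would be a local maximum of $d(o,\cdot)$, i.e.\ a cut-locus point), and moreover the boundary structure of distance balls in tessellations is well-behaved: going outward, each vertex of $S_r$ ``opens up'' in a way controlled by its degree and by the degrees of the faces straddling $S_r$ and $S_{r+1}$. So the first step is to set up the bookkeeping of edges between $S_r$ and $S_{r+1}$ and of the faces that have vertices in both spheres.

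The heart of the argument is a local estimate at each vertex $v\in S_r$: summing the corner curvature over the corners of $v$ and using $\ka(v)\le -k<0$ together with the structure of how faces sit around $v$ relative to the radial direction, one shows that $v$ contributes, on average, a number of forward neighbors and forward faces bounded below by a quantity involving $k$. Concretely, I would count a face $f$ incident to $v$ as ``forward at $v$'' if it has a vertex in $S_{r+1}$; the negativity of curvature forces enough degree at $v$ (large $d_v$) or enough large faces around $v$ that the outer boundary $S_{r+1}$ grows. The role of the constant $C=p/(p-1)$ (resp.\ $C=1$) is exactly the geometric-series correction accounting for the fact that a single face of degree $\le p$ touching $S_r$ can contribute at most $\lfloor \deg(f)/2\rfloor$-ish new vertices to $S_{r+1}$ before ``closing off,'' so one discounts the naive count by a factor $(p-1)/p$. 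Assembling these local estimates over all $v\in S_r$ and carefully avoiding double-counting of shared edges and faces yields an inequality of the shape
\begin{align*}
    s_{r+1}\ge (1+2Ck)\,s_r \quad\text{or more robustly}\quad b_{r+1}-b_r \ge 2Ck\,(b_r-b_{r-1}),
\end{align*}
which unwinds to $\#B_r\ge(1+2Ck)^r$.

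I expect the main obstacle to be the local counting step: one must translate the inequality $\sum_{f\ni v}\ka_C(v,f)\le -k$ into a clean lower bound on the number of new boundary vertices $v$ spawns in $S_{r+1}$, and this requires the structural results on admissibility of distance balls from \cite{BP1,BP2} (which guarantee, for instance, that faces meeting $S_r$ in an edge do so in a single edge and that the forward faces are consecutive around $v$). Handling the faces that are tangent to $S_r$ (meeting it in two vertices of $S_r$ but with an edge going ``backward'') versus those genuinely crossing outward is the delicate case, and the discount factor $C$ is precisely what absorbs the worst such configuration. Once the admissibility machinery is invoked as a black box, the remaining computation is a summation over $v\in S_r$ of rational quantities, bounded below using $\deg(f)\le p$, and is routine. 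A minor additional point is the base case and the treatment of small $r$ (e.g.\ $r=0,1$), which is handled directly since $s_0=1$ and $s_1=d_o$.
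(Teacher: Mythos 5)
There is a genuine gap. The survey itself gives no proof of this theorem (it only cites \cite[Corollary~5.2]{BP1}), but the engine of the actual argument is already in the paper, namely Lemma~\ref{l:Derksen}: applying that combinatorial Gau\ss--Bonnet identity to $W=B_{r}$ (which is legitimate because empty cut locus makes the balls admissible, in particular simply connected) and analysing the boundary structure yields
\begin{align*}
\#S_{r+1}-\#S_{r}\ \ge\ -2C\sum_{v\in B_{r}}\ka(v)\ \ge\ 2Ck\,\#B_{r},
\end{align*}
hence $\#B_{r+1}\ge(1+2Ck)\#B_{r}$ and the claim follows by iteration from $\#B_{0}=1$. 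The crucial feature is that the new sphere is compared with the \emph{whole ball} $B_{r}$, because the total curvature of $B_{r}$ is what appears in Lemma~\ref{l:Derksen}; a large face straddling the boundary keeps contributing to the growth over many radii through the term $\#(f\cap B_{r})/\deg(f)$. Your proposal instead tries to run a purely local, one-step count at each $v\in S_{r}$, and this is exactly where it breaks: a vertex can have $\ka(v)\le -k$ solely because it meets one enormous face while $d_{v}=3$, in which case it has at most two forward neighbours, so no local estimate at $S_{r}$ alone can produce a factor proportional to $\#B_{r}$ (or even reliably to $\#S_{r}$ with the constant $2Ck$). You assert the key inequality (``$v$ contributes, on average, \dots bounded below by a quantity involving $k$'') rather than deriving it, and it is precisely the hard part.

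Two further concrete problems. First, your fallback recursion $b_{r+1}-b_{r}\ge 2Ck\,(b_{r}-b_{r-1})$ is quantitatively useless: it says $\#S_{r+1}\ge 2Ck\,\#S_{r}$, which unwinds to $\#S_{r}\ge(2Ck)^{r}$, and since $k$ may be as small as $1/1806$ (Theorem~\ref{t:Higuchi}) one typically has $2Ck<1$, so this does not imply $\#B_{r}\ge(1+2Ck)^{r}$. Only the ball-versus-ball recursion $b_{r+1}\ge(1+2Ck)b_{r}$, i.e. $\#S_{r+1}\ge 2Ck\,\#B_{r}$, does the job (your first variant $s_{r+1}\ge(1+2Ck)s_{r}$ would also suffice but is a strictly stronger statement that you do not establish). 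Second, your reading of the constant $C$ is inverted: $C=p/(p-1)>1$ when $p<\infty$, so bounded face degree \emph{improves} the lower bound; it is not a ``discount by $(p-1)/p$'' absorbing a bad configuration, but a gain extracted from the face term $\sum_{f}\#(f\cap B_{r})/\deg(f)\ge(\dots)/p$ in Lemma~\ref{l:Derksen}, with $C=1$ being the weaker, unconditional case.
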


The assumption of emptiness of cut locus is for example implied by non-positive corner curvature, Theorem~\ref{t:absence_cut_locus}.

Next, we turn to upper bounds. There is a rough immediate result by comparing the tessellation with a tree. Note that for a regular trees with vertex degree $q$, the number of vertices in $S_{r}$ is exactly $(q-1)^{r}$. We estimate the volume growth of a tessellation by removing edges within spheres and splitting geodesics from $o$ that intersect in a vertex into two different ones. This results in the theorem below.

\begin{thm}\cite[Theorem~4]{KP} Let $G=(V,E,F) $ be a planar tessellation such that $q=\sup_{v\in V}d_{v}<\infty$. Then,
$$\#B_{r}\leq (q-1)^{r}.$$
\end{thm}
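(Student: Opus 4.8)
Proof proposal for $\#B_r \le (q-1)^r$ **where** $q = \sup_{v} d_v < \infty$.

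The plan is to compare the tessellation directly with the $q$-regular tree by a combinatorial unfolding argument that is controlled by the sphere structure. First I would fix the root $o$ and recall that every vertex $x \in S_r$ with $r \ge 1$ has at least one neighbour in $S_{r-1}$, since distances are realized by geodesic paths; call such a neighbour a \emph{parent} of $x$ (choose one if there are several). Running the parent relation backwards produces, for every $x \in S_r$, at least one geodesic $(o = x_0, x_1, \dots, x_r = x)$ from $o$ to $x$. The idea is to bound $\#S_r$ by counting such geodesics after splitting those that share an interior vertex, so that the resulting object is a genuine tree rooted at $o$ in which every vertex has at most $q-1$ children.

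The key step is the branching estimate: if $x \in S_{r-1}$, how many vertices $y \in S_r$ can have $x$ as a parent? The $d_x \le q$ edges at $x$ are partitioned into those going to $S_{r-2}$, those staying within $S_{r-1}$, and those going to $S_r$; since $r \ge 2$ at least one edge goes to $S_{r-2}$ (the one along a fixed geodesic realizing $d(o,x) = r-1$), so at most $q-1$ edges can reach $S_r$. Hence each vertex of $S_{r-1}$ is the parent of at most $q-1$ vertices of $S_r$. For the base step $r = 1$ one uses $d_o \le q$, and in fact one must be slightly careful: $o$ itself has up to $q$ children, which already gives $\#S_1 \le q \le (q-1)^1 + 1$, so the clean bound requires organizing the count as $\#B_r \le 1 + q\sum_{j=0}^{r-1}(q-1)^j$ or, more efficiently, rerooting the comparison tree so that only $o$ has degree $q$ and observing $1 + q\bigl((q-1)^r - 1\bigr)/(q-2) \le$ the number of vertices in the ball of radius $r$ in the $q$-regular tree, which is exactly $(q-1)^r$ after the standard identity for regular trees; alternatively, and more cleanly, one shows directly by induction that $\#S_r \le (q-1)^r - (q-1)^{r-1}$ for $r \ge 1$ together with the observation that the edges within $S_{r-1}$ and toward $S_r$ cannot all be present, and then $\#B_r = 1 + \sum_{j=1}^r \#S_j$ telescopes to at most $(q-1)^r$.

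The main obstacle is the bookkeeping needed to make the splitting argument rigorous: a priori several vertices of $S_{r-1}$ may be parents of the same $y \in S_r$, and a single geodesic ray may be counted once for each of its terminal vertices, so one cannot simply multiply $\#S_{r-1}$ by $q-1$ without double-counting control. The remedy is to make the parent choice a \emph{function} (pick, say, the lexicographically first parent once $V$ is enumerated), which turns the parent relation into a forest; then the number of vertices at depth $r$ in this forest is exactly $\#S_r$, and the branching bound $\le q-1$ per interior node (with $\le q$ at the root) gives the claim by the elementary count of nodes in a bounded-degree rooted tree. The planarity and tessellation hypotheses are not actually needed for this bound — only local finiteness with $d_v \le q$ — but stating it in the tessellation setting keeps it parallel to the preceding lower-bound theorem and sets up the comparison with the exponential lower bound under negative curvature, showing that for planar tessellations with $\ka \le -k < 0$ and bounded face degree the volume growth is genuinely exponential, trapped between $(1+2Ck)^r$ and $(q-1)^r$.
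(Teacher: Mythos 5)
Your core strategy --- realizing each vertex of $S_r$ as the endpoint of a geodesic from $o$, turning the parent relation into a rooted forest, and bounding the branching by $q-1$ because every vertex of $S_{r-1}$ (for $r\ge 2$) spends at least one of its at most $q$ edges on a neighbour in $S_{r-2}$ --- is exactly the comparison the paper sketches (``removing edges within spheres and splitting geodesics from $o$ that intersect in a vertex into two different ones''), and that part is sound. It yields $\#S_1\le q$ and $\#S_r\le (q-1)\,\#S_{r-1}$ for $r\ge 2$, hence $\#S_r\le q(q-1)^{r-1}$, and as you correctly observe it uses only local finiteness and the degree bound, not planarity.

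The gap is in the endgame: neither of your two proposed routes to the exact inequality $\#B_r\le (q-1)^r$ works. The ball of radius $r$ in the $q$-regular tree has $1+q\big((q-1)^r-1\big)/(q-2)$ vertices, which for $q\ge 3$ is strictly \emph{larger} than $(q-1)^r$, not equal to it; and the induction hypothesis $\#S_r\le (q-1)^r-(q-1)^{r-1}$ already fails at $r=1$, since $\#S_1=d_o$ may equal $q>q-2$ (e.g.\ any $3$-regular tessellation). In fact no bookkeeping can rescue the literal statement: for the honeycomb lattice one has $q=3$ and $\#B_1=4>2=(q-1)^1$, $\#B_2=10>4=(q-1)^2$. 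The result should therefore be read as a bound on the exponential growth rate, $\limsup_{r\to\infty}\frac{1}{r}\log\#B_r\le\log(q-1)$, which is what \cite[Theorem~4]{KP} actually asserts and which your branching estimate does establish. In short: right key lemma, same method as the paper, but the pointwise form of the conclusion you are trying to force is unattainable, and both algebraic shortcuts you offer to reach it are erroneous.
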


For tessellations whose faces  have  fixed degree $p$, one can give an explicit recursion formula for the size of the distances spheres.  The recursion is expressed in terms of  \emph{normalized average curvatures} over
spheres $$\overline\Phi_r:=\overline{\Phi}(S_r) := \left({\frac{2p}{p-2}}\right)\frac{1}{\#S_r}\sum_{v\in S_{r}}\ka(v).$$
Recall that the constant $2\pi(p-2)/2p$ is the internal angle $\beta(f)$ of a regular $p$-gon $f$.

Similar to the lower bound above the growth formula can be proven for tessellations without \emph{cut locus}  which is implied by non-positive corner curvature, Theorem~\ref{t:absence_cut_locus}. In the case of face regular graphs non-positive corner curvature is equivalent to non-positive curvature. However, the theorem below is not restricted to the non-positive curvature case.

For $3 \le p < \infty$, let $N = \frac{p-2}{2}$ if
$p$ is even and $N = p-2$ if $p$ is odd, and
\begin{equation*} \label{eq:bl}
  b_l = \begin{cases} \frac{4}{p-2}-2 &:\; \text{if $p$ is odd and $l =
    \frac{N-1}{2}$,}\\
    \frac{4}{p-2} &:\; \text{else,} \end{cases}
\end{equation*}
for $0 \le l \le N-1$.

\begin{thm}\label{thm:facereggrow} \cite[Theorem~2]{KP}
  Let $G=(V,E,F)$ be a $p$-face regular  tessellation with empty cut locus.
  Then  the following   $(N+1)$-step recursion formulas for $r \ge 1$ holds
  \begin{equation*} \label{eq:sigman}
\#  S_{r+1}
  = \begin{cases}
  \sum_{l=0}^{r-1} (b_l -  \overline{\Phi}_{r-l})
    \#S_{r-l} + \#S_1&:\; \text{if $r < N$,} \\[.3cm]
    \sum_{l=0}^{N-1} (b_l - \overline{\Phi}_{N-l}) \#S_{N-l} &:\; \text{if $r =
      N$,} \\[.3cm]
     \sum_{l=0}^{N-1} (b_l - \overline{\Phi}_{r-l}) \#S_{r-l}  - \#S_{r-N}&:\;
    \text{if $r > N$.} \end{cases}
  \end{equation*}
\end{thm}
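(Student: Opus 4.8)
The plan is to analyze the boundary structure of distance balls in a $p$-face regular tessellation with empty cut locus, and to keep track of how the sphere $S_{r+1}$ is built out of corners and faces seen from $S_r$. The starting point is the combinatorial description of balls available under the empty cut locus hypothesis (equivalently, under non-positive corner curvature for face regular graphs via Theorem~\ref{t:absence_cut_locus} in the relevant cases, but here one argues directly from the geometry of admissible balls): every vertex in $S_{r+1}$ has at least one neighbor in $S_r$, there are no edges ``backward'' that skip a sphere, and the faces incident to the boundary path of $B_r$ are either faces of $B_r$ (interior faces), faces straddling $S_r$ and $S_{r+1}$, or faces lying entirely in the ``outside'' region. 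I would first set up a bookkeeping of the edges between $S_r$ and $S_{r+1}$, the edges within $S_{r+1}$, and the faces incident to $S_r$, classifying the latter by how many of their $p$ vertices lie in $S_r$, in $S_{r+1}$, and in deeper spheres.

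Next I would derive a local counting identity at each vertex $v\in S_r$. Writing $n_-(v)$, $n_0(v)$, $n_+(v)$ for the number of neighbors of $v$ in $S_{r-1}$, $S_r$, $S_{r+1}$ respectively, the curvature relation $\ka(v) = 1 - d_v/2 + \sum_{f\ni v} 1/\deg(f)$ together with $\deg(f)=p$ for all $f$ gives $d_v = \tfrac{2p}{p-2}(1-\ka(v)) \cdot \tfrac{p-2}{2p}\cdot\ldots$; more precisely $\ka(v) = 1 - d_v/2 + d_v/p$, so $d_v(1/2 - 1/p) = 1 - \ka(v)$, i.e. $d_v = \tfrac{2p}{p-2}(1-\ka(v))$. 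Summing over $v\in S_r$ and using $\sum_{v\in S_r}\ka(v) = \tfrac{p-2}{2p}\#S_r\,\overline\Phi_r$, one gets $\sum_{v\in S_r} d_v = \tfrac{2p}{p-2}\#S_r - \#S_r\,\overline\Phi_r$. The quantity $\sum_{v\in S_r}d_v$ decomposes into edges going down, sideways, and up, and the ``sideways/up'' part must in turn be related to $\#S_{r+1}$ and to the faces that are not yet closed. The parameter $N$ and the coefficients $b_l$ enter precisely because a face of degree $p$ with a vertex on $S_r$ closes up only after $N$ further spheres (for even $p$ it reaches from $S_r$ to $S_{r+N}$ symmetrically; for odd $p$ there is the exceptional middle generation where $b_l$ is shifted by $-2$). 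So I would introduce the count $\#F_r^{(j)}$ of faces whose ``innermost'' vertices lie on $S_r$ and which first close on $S_{r+j}$, set up the transfer relations among these, and eliminate them.

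The core of the argument is then to turn the two conservation laws — one for edges (from the degree sum above) and one for faces (each face of $B_{r+1}$ not in $B_r$ is accounted for exactly once as it is ``born'' on some sphere) — into a single linear recursion for $\#S_{r+1}$ after eliminating all the auxiliary face counts. The depth-$N$ memory is exactly why the recursion is $(N+1)$-step: a face born at generation $r-N$ has fully closed by generation $r$ and so drops out, producing the $-\#S_{r-N}$ term in the regime $r>N$; in the regime $r<N$ no face has closed yet, which is why one sees the initial-condition term $+\#S_1$ and a truncated sum; $r=N$ is the transitional case. The odd-$p$ correction to $b_l$ at $l=(N-1)/2$ comes from the single ``diagonal'' edge a face contributes when its two innermost vertices are not adjacent but sit symmetrically, versus the generic case where faces contribute a full edge on each relevant sphere.

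The main obstacle, and where the real work lies, is the precise combinatorial geometry of how faces and edges are distributed across consecutive spheres — i.e. justifying that the only configurations that occur are the ones the bookkeeping assumes (no face has two separated arcs of vertices on the same sphere, every boundary vertex of $B_r$ sees the outside through a controlled number of corners, etc.). This is exactly the admissibility/convexity analysis of balls from \cite{BP2,BP1}, and establishing it rigorously — rather than the subsequent linear algebra of eliminating the $\#F_r^{(j)}$ — is the heart of the proof. Once the structural lemmas are in place, deriving the stated three-case recursion is a matter of carefully solving the resulting linear system and identifying the constants $N$ and $b_l$, with the parity of $p$ handled separately.
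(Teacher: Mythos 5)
Your plan follows essentially the same route as the source \cite{KP} and as the sketch the paper gives for the companion comparison theorem (Theorem~\ref{t:growest}): you classify faces by how many vertices they carry in each ball, use the transfer rule that such a face gains two vertices per successive sphere (exactly the paper's identity $c_j^r=c_{j+2}^{r+1}$ for $j\le p-2$), convert $\overline\Phi_r$ into the degree sum via the correct identity $d_v=\tfrac{2p}{p-2}(1-\ka(v))$, and eliminate the auxiliary face counts to obtain the depth-$(N+1)$ recursion, with the $-\#S_{r-N}$ term coming from faces that have fully closed. Your identification of the admissibility/convexity analysis of balls from \cite{BP1,BP2} as the real content of the argument, with the remaining work being the bookkeeping and the odd-$p$ parity correction at $l=(N-1)/2$, matches where the actual proof in \cite{KP} puts the weight.
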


The $(N+1)$-step recursion formula gives rise to a \emph{recursion matrix} $M_r$, $r\ge0$, mapping $\R^{N}$ to $\R^{N}$ such that $M_{r}(\#S_{r-N},\ldots, \#S_{r})=(\#S_{r-N+1},\ldots, \#S_{r+1})$.

In the special case when also the vertex degree is constant, say $q$, we have a $(p,q)$-regular tessellation. Then, the constant $b_{l}-\ov\ka_{k}$ is equal to $q-2$, except for $l=(N-1)/2$ and $q$ odd in which case we have $q-4$.
In particular, there is a matrix $M$ such that $M=M_{r}$ for all $r\ge0$. The characteristic polynomial of $M$ is then given by the complex polynomial
\begin{eqnarray*}
  g_{p,q}(z) = 1 - (q-2) z - \dots - (q-2) z^N + z^{N+1},
\end{eqnarray*}
if $p$ is even, and
\begin{eqnarray*}
  g_{p,q}(z) = 1 - (q-2)z - \dots - (q-4)z^{\frac{N+1}{2}} - \dots - (q-2)z^N
  + z^{N+1},
\end{eqnarray*}
if $p$ is odd.  By \cite{CaWa} and \cite{BaCS}, $g_{p,q}$ is a reciprocal Salem polynomial, i.e., its roots lie on the
complex unit circle except for two positive reciprocal real zeros
$$\frac{1}{x_{p,q}} < 1 < x_{p,q} < p-1.$$
This yields
\begin{align*}
    \limsup_{t\to\infty}\frac{1}{r}\log\#S_{r}=\log x_{p,q}
\end{align*}
in the special case of a $(p,q)$-regular tessellation.
In particular, the considerations above recover the results of Cannon and Wagreich \cite{CaWa} and Floyd and Plotnick \cite[Section 3]{FP} that the growth function  is a rational function.

To relate growth rate of an arbitrary face regular tessellation to a $(p,q)$-regular tessellation we present a volume growth comparison theorem which is an analogue to the Bishop-Guenther-Gromov comparison theorem from Riemannian geometry.

\begin{thm}[Theorem~3 in \cite{KP}] \label{t:growest}
  Let $G = (V,E,F)$ and $\widetilde G = (\widetilde V,\widetilde   E, \widetilde F)$ be two $p$-face regular tessellations with   non-positive vertex curvature and let $S_r \subset V$ and $\widetilde S_r   \subset \ow V$ be spheres with respect to the centers $o \in V$ and
  $\widetilde o \in \widetilde V$, respectively. Assume that the normalized   average spherical curvatures satisfy   $$ \overline{\Phi}(\widetilde S_r) \le \overline{\Phi}( S_r) \le 0, \quad
 \ r\ge 0. $$
  Then the difference sequence $(\#\widetilde S_r - \# S_r)$ satisfies $ \#\widetilde S_r - \# S_r\ge 0$, $r\ge0$, and
  is monotone non-decreasing.
\end{thm}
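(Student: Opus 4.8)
The plan is to pass to the $(N+1)$-step recursion of Theorem~\ref{thm:facereggrow} and to rephrase the comparison as a statement about two solutions of one and the same recursion. First note that for a $p$-face regular tessellation every corner $(v,f)$ has $\ka_C(v,f)=\frac{1}{d_v}-\frac12+\frac1p$, so $\ka_C\le 0$ at all corners of a vertex $v$ is equivalent to $d_v\ge\frac{2p}{p-2}$, hence to $\ka(v)=1-d_v\frac{p-2}{2p}\le 0$. Thus both $G$ and $\widetilde G$ have non-positive corner curvature and, by Theorem~\ref{t:absence_cut_locus}, empty cut locus, so Theorem~\ref{thm:facereggrow} applies to both with the \emph{same} coefficients $b_l$; only the normalized spherical curvature averages differ. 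Write $s_r=\#S_r$, $\widetilde s_r=\#\widetilde S_r$, $d_r=\widetilde s_r-s_r$, $\ph_r=\overline\Phi(S_r)$, $\widetilde\ph_r=\overline\Phi(\widetilde S_r)$ and $\delta_r=\ph_r-\widetilde\ph_r$, so that $\delta_r\ge 0$ and $\widetilde\ph_r\le 0$ for all $r\ge 0$.

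Subtracting the recursion for $G$ from the one for $\widetilde G$ and using the identity
\begin{align*}
(b_l-\widetilde\ph_{r-l})\,\widetilde s_{r-l}-(b_l-\ph_{r-l})\,s_{r-l}=(b_l-\widetilde\ph_{r-l})\,d_{r-l}+\delta_{r-l}\,s_{r-l},
\end{align*}
one finds that $(d_r)$ satisfies the very same $(N+1)$-step recursion, now driven by $(\widetilde\ph_r)$, with an additional forcing term $A_r:=\sum_l\delta_{r-l}\,s_{r-l}$; for instance, for $r>N$,
\begin{align*}
d_{r+1}=\sum_{l=0}^{N-1}(b_l-\widetilde\ph_{r-l})\,d_{r-l}-d_{r-N}+A_r .
\end{align*}
Crucially $A_r\ge 0$, since $\delta_{r-l}\ge 0$ and $s_{r-l}\ge 0$. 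For the starting data, $d_0=0$, and since the center of a $p$-face regular tessellation satisfies $\#S_1=d_o=\frac{2p}{p-2}(1-\ka(o))=\frac{2p}{p-2}-\overline\Phi(S_0)$, one gets $d_1=\delta_0\ge 0$; the low-order recursion formulas give $0\le d_1\le\cdots\le d_N$ by the same kind of reasoning as in the main step below.

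I would then prove $0\le d_0\le d_1\le d_2\le\cdots$ by induction on $r$, which is exactly the assertion of the theorem ($d_r\ge 0$ being a consequence of $d_0=0$). Assuming $0\le d_0\le\cdots\le d_r$ and inserting this into the forced recursion for $d_{r+1}$: the coefficients satisfy $b_l-\widetilde\ph_{r-l}\ge b_l$ because $\widetilde\ph_{r-l}\le 0$, the forcing $A_r$ is non-negative, and $(d_j)_{j\le r}$ is non-decreasing. The only terms that could pull $d_{r+1}$ below $d_r$ are the subtracted term $-d_{r-N}$ and, for $p$ odd, the middle term, whose coefficient $b_{(N-1)/2}=\frac{4}{p-2}-2$ can be negative; using $\ka\le 0$ together with $d_v\ge\frac{2p}{p-2}$ one checks that the effective middle coefficient $b_{(N-1)/2}-\widetilde\ph_{r-(N-1)/2}$ is still $\ge-1$. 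It then remains to show that the terms with positive coefficients, reinforced by the non-negative quantities $-\widetilde\ph_{r-l}$ and $A_r$, dominate these ``$-1$'' contributions, which gives $d_{r+1}\ge d_r$.

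The real work is exactly this last compensation, and to make it go through uniformly in $p$ is the main obstacle. A convenient way to organise it is to pass to the recursion matrices $M_r=M_r^{\widetilde G}$ from Theorem~\ref{thm:facereggrow} and to exhibit a closed convex cone $\Kc\subset\R^{N+1}$ — of monotone non-negative vectors, presumably subject to a further convexity constraint — such that (i) $M_r\,\Kc\subseteq\Kc$ whenever $\widetilde\ph\le 0$, (ii) $\Kc$ is stable under adding forcing vectors $(0,\dots,0,A_r)$ with $A_r\ge 0$, and (iii) the initial vector $(d_0,\dots,d_N)$ lies in $\Kc$; iterating then gives $(d_{r-N},\dots,d_r)\in\Kc$ for every $r$, hence the monotonicity and non-negativity of $(d_r)$. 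Establishing the invariance (i) is the heart of the matter: it requires the precise values of the $b_l$, the sign hypothesis $\widetilde\ph\le 0$, and — for odd $p$ — the reciprocal Salem structure of the polynomials $g_{p,q}$, which is what fixes the critical growth rate against which the two tessellations are being compared. For $p=4$ one already sees the mechanism cleanly: there $N=1$, $s_{r+1}-s_r=(s_r-s_{r-1})-\ph_r s_r$ with $\ph_r s_r\le 0$, and the analogous identity for $d_r$ shows that $d_{r+1}-d_r$ is non-decreasing and starts at $\delta_0\ge 0$; for general $p$ one expects this telescoping to be replaced by an argument on the reduced recursion obtained by dividing out the root $z=1$.
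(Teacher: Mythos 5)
Your setup is sound and genuinely different from the paper's route: the paper (following \cite{KP}) proves the comparison below the level of the scalar recursion, by tracking the face counts $c_j^r$ of faces meeting $B_r$ in exactly $j$ vertices together with the shift relation $c_j^r=c_{j+2}^{r+1}$, whereas you work directly with the $(N+1)$-step recursion of Theorem~\ref{thm:facereggrow}. Your reduction is legitimate (for $p$-face regular tessellations non-positive vertex and corner curvature coincide, so Theorem~\ref{t:absence_cut_locus} gives empty cut locus for both graphs), the subtraction identity, the sign $A_r\ge0$, and the initial data $d_0=0$, $d_1=\delta_0\ge0$ are all correct. But the decisive step --- that the recursion propagates $0\le d_0\le\cdots\le d_r$ to $d_{r+1}\ge d_r$ --- is precisely what you do not prove. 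You explicitly defer it to an invariant convex cone ``subject to a further convexity constraint'' whose definition you do not give and whose invariance under $M_r$ you do not check, invoking the Salem structure of $g_{p,q}$ as an ingredient. As written this is a gap: the entire content of the theorem sits in the one inequality you leave open.

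The gap can in fact be closed elementarily, and the observation you use only for the odd middle coefficient does the whole job. Since $d_v$ is an integer and $\ka\le0$ forces $d_v\ge\frac{2p}{p-2}$, one has $d_v\ge\lceil\frac{2p}{p-2}\rceil$ and hence $-\widetilde\ph_r\ge\lceil\frac{2p}{p-2}\rceil-\frac{2p}{p-2}$, which equals $1-\frac{4}{p-2}$ for $p\ge7$. Therefore every generic effective coefficient satisfies $b_l-\widetilde\ph_{r-l}\ge\frac{4}{p-2}+1-\frac{4}{p-2}=1$ (for $3\le p\le6$ one checks $b_l-\widetilde\ph_{r-l}\ge1$ directly), while the exceptional middle coefficient for odd $p$ is $\ge-1$ as you note. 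With $N$ coefficients $\ge1$, at most one of them replaced by $\ge-1$, one subtracted term $-d_{r-N}$, a non-negative forcing $A_r$, and a monotone non-negative window, one gets for instance for even $p$ and $N\ge2$ that $d_{r+1}-d_r\ge\sum_{l=1}^{N-1}d_{r-l}-d_{r-N}\ge(N-2)d_{r-N}\ge0$, and for odd $p\ge7$ one pairs $d_{r-1}\ge d_{r-(N-1)/2}$ and $d_{r-2}$ or $d_{r-3}\ge d_{r-N}$; the remaining small $p$ and the regimes $r\le N$ are checked the same way. So no invariant cone and no Salem-polynomial input is needed --- that structure governs the growth rate, not the comparison. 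Had you carried out this coefficient bound (rather than only $b_l-\widetilde\ph_{r-l}\ge b_l$), your argument would be complete and arguably simpler than the face-counting proof in \cite{KP}.
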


\begin{proof}[Idea of proof] The proof given in \cite{KP} depends heavily on the assumption of constant face degree. Let us briefly illustrate the underlying idea. We count the number of faces $c_{j}^{r}$ that intersect a ball $B_{r}$ in $1\leq j\leq p$ vertices, where $p$ is the constant face degree. If $j\leq p-2$, then this number equals the number of faces $c_{j+2}^{r+1}$ that intersect the ball $B_{r+1}$ in $j+2$ vertices. Finally, one can relate the numbers $c_{j}^{r}$ to the number of vertices in a sphere $S_{r}$.
\end{proof}

An important quantity to relate volume growth to spectral theory is the exponential growth rate. For a set $W\subseteq V$, define
\begin{align*}
    \mathrm{vol}(W)=\sum_{v\in W}d_{v}.
\end{align*}
Let $E_{W}$ be the set of edges that have both vertices in $W$. We find
\begin{align*}
    \mathrm{vol}(W)=2\#E_{W}+\#\partial W
\end{align*}
which can be interpreted as twice the number of edges within $W$ and once the number of edges leaving $W$.
The \emph{exponential growth rate} is defined as
$$\mu=\limsup_{r \to \infty} \frac{1}{r}\log \mathrm{vol}(B_r).$$
For connected  graphs $\mu$ does not depend on the choice of the center $o$ of the balls.

In order to relate the results above to $\mu$ we need the following lemma.
\begin{lemma}Let $G$ be a tessellation of a surface of finite genus. Then,
\begin{align*}
 \mu=\limsup_{r \to \infty} \frac{1}{r}\log \#B_r
 =\limsup_{r \to \infty} \frac{1}{r}\log \#S_r.
\end{align*}
\end{lemma}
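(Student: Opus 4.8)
The plan is to prove the three quantities are equal by a chain of inequalities, exploiting that the graph is locally finite with controlled vertex degree along spheres and that faces have bounded degree. Write $\mu_{\mathrm{vol}}$, $\mu_{B}$, $\mu_{S}$ for the three $\limsup$'s in the statement. The trivial inclusions $S_r\subseteq B_r$ and $2\#E_{B_r}\le \mathrm{vol}(B_r)$ together with $\#B_r\le\mathrm{vol}(B_r)$ already give $\mu_{S}\le\mu_{B}$ and $\mu_{B}\le\mu_{\mathrm{vol}}$, so the real content is the reverse chain $\mu_{\mathrm{vol}}\le\mu_{S}$.

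First I would dispose of $\mu_{B}\le\mu_{S}$. Since $B_r=\bigsqcup_{j=0}^{r}S_j$, we have $\#B_r=\sum_{j\le r}\#S_j\le (r+1)\max_{j\le r}\#S_j$, and taking $\tfrac1r\log(\cdot)$ the factor $r+1$ is absorbed in the limit; so $\mu_{B}\le\limsup_r\tfrac1r\log\max_{j\le r}\#S_j=\mu_{S}$ (the last equality because, if $\#S_{j(r)}$ realizes the max with $j(r)\le r$ and $j(r)\to\infty$, then $\tfrac1r\log\#S_{j(r)}\le\tfrac1{j(r)}\log\#S_{j(r)}$ along the relevant subsequence; the case where $j(r)$ stays bounded only contributes $0$). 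Combined with the trivial direction this gives $\mu_{B}=\mu_{S}$.

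The remaining and main point is $\mu_{\mathrm{vol}}\le\mu_{B}$, i.e.\ controlling $\mathrm{vol}(B_r)=\sum_{v\in B_r}d_v$ by $\#B_r$ up to subexponential factors. If the graph has uniformly bounded vertex degree this is immediate ($\mathrm{vol}(B_r)\le q\,\#B_r$), but the lemma is stated without that hypothesis, so the key obstacle is handling vertices of large degree. Here I would use the identity $\mathrm{vol}(B_r)=2\#E_{B_r}+\#\partial B_r\le 2\#E_{B_{r+1}}$ and bound $\#E_{B_{r+1}}$ via faces: by the tessellation axioms (T1)--(T3) every edge lies in exactly two faces, every face is a disc bounded by $\deg(f)$ edges, and $\deg(f)\le p$ where $p=\sup_f\deg(f)$ (finite since the surface has finite genus, by an Euler-characteristic/Gauß--Bonnet argument — a vertex of huge degree would force, via Proposition~\ref{p:ka_vs_deg}, very negative curvature, but more directly one argues that faces meeting $B_{r+1}$ have all their vertices in $B_{r+2}$). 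Thus $2\#E_{B_{r+1}}\le p\cdot\#\{f: f\cap B_{r+1}\neq\emptyset\}\le p\cdot\#\{\text{corners }(v,f):v\in B_{r+2}\}=p\sum_{v\in B_{r+2}}d_v=p\,\mathrm{vol}(B_{r+2})$ — which is circular. To break the circularity I would instead count each such face once through a chosen representative vertex in $B_{r+2}$ realizing $\min$ distance, giving $\#\{f:f\cap B_{r+1}\neq\emptyset\}\le$ (number of (face,vertex) incidences with the vertex being the face's distance-minimizer), and then observe that a fixed vertex $v$ is the distance-minimizer of at most $d_v$ faces, returning us to $\mathrm{vol}$. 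The clean way out, which I expect is what the paper intends: bound $\#E_{B_{r+1}}$ directly by the number of faces meeting $B_{r+1}$ times $p/2$, and bound the number of such faces by $\#B_{r+2}$ times a constant — because in a tessellation with $\deg(f)\le p$ the faces incident to a single vertex number at most $d_v$, but faces incident to $B_{r+1}$ each contain a vertex of $B_{r+1}$ and each vertex is in at most (its degree) faces; degrees on $S_j$ for the relevant $j$ are themselves controlled because a high-degree vertex $v$ has $d_v$ distinct neighbors, all in $B_{r+2}$, so $\sum_{v\in B_{r+1}}d_v\le 2\#B_{r+2}\cdot\!$(something) fails only if degrees are unbounded, and unbounded degree on arbitrarily small balls contradicts local finiteness only pointwise, not uniformly. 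Therefore the honest argument is: faces meeting $B_{r+1}$ are contained in $B_{r+2}$ (each is a disc whose boundary vertices lie within distance $1$ of $B_{r+1}$), so $\mathrm{vol}(B_r)\le 2\#E_{B_{r+1}}\le \sum_{f\subseteq B_{r+2}}\deg(f)=\#C(G)\cap(V\times\{f\subseteq B_{r+2}\})$; and since every corner $(v,f)$ with $f\subseteq B_{r+2}$ has $v\in B_{r+2}$, this is $\le\mathrm{vol}(B_{r+2})$ — so what we actually get for free is $\mathrm{vol}(B_r)\le\mathrm{vol}(B_{r+2})$, and we still need $\mathrm{vol}(B_{r+2})\le(\text{poly})\cdot\#B_{r+3}$.

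I therefore expect the decisive step to be: \emph{each face of degree $\ge 3$ contributes at least one new vertex per two of its edges}, so that counting edges of $B_{r+1}$ through faces and then faces through vertices yields $\#E_{B_{r+1}}\le \tfrac{p}{p-2}\,\#\{v\in B_{r+2}\}$ — this uses $\deg(f)\le p<\infty$ and the disc property to say a face with $\deg(f)$ edges contributes $\deg(f)\ge\tfrac{2}{1}\cdot\tfrac{\deg(f)-2}{?}$ independent vertices, made precise by the handshake-type bound $\sum_{v}d_v=2\#E$ applied to the subgraph induced on $B_{r+2}$ together with $\#F'\le \tfrac{2}{3}\#E'$ for any subcomplex (since $3\#F'\le\sum_{f\in F'}\deg(f)=$ face-edge incidences $\le 2\#E'$). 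Concretely: restrict to the subcomplex on $B_{r+2}$; Euler-type counting gives $\#E_{B_{r+2}}\le 3\#B_{r+2}-6$ for a planar piece (and an analogous bound with a genus correction in the finite-genus case), hence $\mathrm{vol}(B_r)\le 2\#E_{B_{r+1}}\le 2\#E_{B_{r+2}}\le 6\#B_{r+2}$, and thus $\mu_{\mathrm{vol}}\le\limsup_r\tfrac1r\log(6\#B_{r+2})=\mu_{B}$. That planar-subcomplex edge bound $\#E'\le 3\#V'+c(g)$ is the one genuine input and is where finite genus is used; everything else is bookkeeping with $\limsup$'s. Assembling $\mu_{S}\le\mu_{B}\le\mu_{\mathrm{vol}}\le\mu_{B}=\mu_{S}$ closes the loop.
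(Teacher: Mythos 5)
Your final assembled argument is essentially the paper's proof: the decisive input is the Euler/Poincar\'e-formula bound $\#E_W\le C\#W$ with $C$ depending only on the genus (obtained from $3\#F'\le 2\#E'$), combined with $\mathrm{vol}(B_r)=2\#E_{B_r}+\#\partial B_r\le 2\#E_{B_{r+1}}$ and the sphere/ball comparison in which the polynomial factor $r+1$ is absorbed by the $\tfrac1r\log$. The exploratory detours in the middle --- in particular the claim that finite genus forces $\sup_f\deg(f)<\infty$, which is false --- are not used in the argument you ultimately settle on, so the proof stands and matches the paper's.
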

\begin{proof}
Let $g$ be the genus of the surface. Using the fact that every face is included in two edges and every face includes at least three vertices we derive from Poincar\'e's formula for finite $W\subseteq V$
\begin{align*}
    \# E_{W}  \leq C \# W
\end{align*}
with $C=\max\{2g + 1, 3\}$, (for details see \cite[Lemma 6.2.]{BGK1}). We derive for $r\ge1$
\begin{align*}
    \# B_{r}\leq \mathrm{vol}(B_{r})=2\#E_{B_{r}}+\#\partial B_{r}\le 2\#E_{B_{r+1}}\leq 2C\# B_{r+1}.
\end{align*}
This yields the first inequality. To see the second equality let $R_{k}$ be a subsequence realizing the $\limsup$. Let $0\leq r_{k}\leq R_k$ such that $\#S_{r_{k}}\ge\# S_{j}$, $0\leq j\leq R_k$. Then,
\begin{align*}
  \frac{1}{R_{k}}\log \#B_{R_{k}}= \frac{1}{R_{k}}\log\Big( \#S_{r_{k}}\sum_{j=0}^{R_{k}}\frac{\# S_{j}}{\# S_{r_{k}}}\Big)\leq \frac{1}{r_{k}}\log \#S_{r_{k}}+\frac{1}{R_{k}}\log R_{k}.
\end{align*}
Hence,  the assertion follows by taking limits.
\end{proof}

\subsection{Isoperimetric inequalities}\label{s:isoperimetric}

The isoperimetric problem goes back to a tale about Dido of Carthage. Here, we consider such a problem in the graph case and want to minimize the ratio of the boundary area and the volume of finite sets. The ratio is called the isoperimetric constants and it has many applications in mathematics and computer science, as e.g. applications to mixing times of Markov chains which is relevant in simulation, \cite{LPW,MT}.

A rough synopsis of this section is that non-negative curvature implies zero isoperimetric constant, while negative curvature implies positive isoperimetric constant.

In the negative curvature case, we further discuss an explicit formula for $(p,q)$-regular tessellations, give lower bounds as results of upper bounds on the curvature and give a criterion for positivity by asymptotically negative curvature. As a corollary we show that this implies Gromov hyperbolicity of the tessellation. Finally, we discuss an isoperimetric constant at infinity.

Let us be more  precise. We let
\begin{align*}
    \partial W =\{(v,w)\in W\times (V\setminus W)\mid v\sim w\}
\end{align*}
for a finite set $W\subseteq V$. This set can be interpreted as the edges leaving $W$ and the area of the boundary will be number of elements in $\partial W$. Furthermore, as in the previous section we let the volume of a finite set $W$ be given by
\begin{align*}
    \mathrm{vol}(W)=\sum_{v\in W}d_{v}.
\end{align*}

We define the \emph{isoperimetric constant} to be
\begin{align*}
    \al=\inf_{W\subset V \mbox{{\scriptsize finite}}}\frac{\#\partial W}{\mathrm{vol}(W)}.
\end{align*}
By the formula  $\mathrm{vol}(W)=2\#E_{W}+\#\partial W$ we conclude
$$0\leq \al< 1.$$

One can also define the volume by the number of vertices of the set and define the isoperimetric constant accordingly. However,  measuring volume by counting edges as above proves to be more effective for spectral estimates as the estimate does not become trivial for unbounded vertex degree; compare  the results of \cite{Do} and \cite{DKe}. Another choice for the area of the boundary is to involve so called intrinsic metrics, see \cite{BKW}. Although the corresponding isoperimetric constant works well for spectral estimate, the constant is hard to estimates combinatorially in the tessellation case.

We first start with a theorem which is a corollary of Theorem~\ref{t:grow_nonneg_curvature}.

\begin{thm}\label{t:iso_nonneg_curvature} Let $G$ be a tessellation embedded in a surface of finite genus such that $\ka\ge0$. Then,
\begin{align*}
\al=0
\end{align*}
\end{thm}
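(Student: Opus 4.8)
The statement asserts that a tessellation (embedded in a surface of finite genus) with $\ka \ge 0$ has isoperimetric constant $\al = 0$. The plan is to deduce this directly from the polynomial volume growth bound $\# B_r \le C r^2$ of Theorem~\ref{t:grow_nonneg_curvature}, by exhibiting a sequence of finite sets $W_n \subseteq V$ with $\#\partial W_n / \mathrm{vol}(W_n) \to 0$. The natural candidates are the balls $W_n = B_n$ themselves.

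First I would observe that by the formula $\mathrm{vol}(W) = 2\#E_W + \#\partial W$ together with the edge-counting bound $\#E_W \le C'\#W$ from the lemma in Section~\ref{s:volume} (valid for finite genus with $C' = \max\{2g+1,3\}$), one has $\mathrm{vol}(B_n) \le (2C'+1)\#B_n$. Hence it suffices to show $\#\partial B_n / \# B_n \to 0$. Now $\partial B_n$ consists of edges from $S_n$ to $S_{n+1}$, so $\#\partial B_n \le q \cdot \#S_{n+1} \le q(\#B_{n+1} - \#B_n)$ where $q$ bounds the vertex degrees locally; more robustly, $\#\partial B_n \le \mathrm{vol}(S_{n+1}) \le \mathrm{vol}(B_{n+1}) - \mathrm{vol}(B_n)$. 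The key point is the standard telescoping/averaging argument: if $\#B_n \le Cn^2$ for all $n$, then $\liminf_n (\#B_{n+1} - \#B_n)/\#B_n = 0$. Indeed, if instead $\#B_{n+1} - \#B_n \ge \eps \#B_n$ for all large $n$, then $\#B_n$ would grow at least like $(1+\eps)^n$, contradicting the quadratic bound. So along a subsequence $n_k$, $(\#B_{n_k+1} - \#B_{n_k})/\#B_{n_k} \to 0$, and combining with the volume comparisons gives $\#\partial B_{n_k}/\mathrm{vol}(B_{n_k}) \to 0$, hence $\al = 0$.

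One subtlety to handle carefully: the comparison $\#\partial B_n \le q(\#S_{n+1})$ requires a bound on vertex degrees, which is not assumed globally. This is why I would route the estimate through $\mathrm{vol}$ rather than vertex counts: $\#\partial B_n \le \mathrm{vol}(B_{n+1}) - \mathrm{vol}(B_n)$ uses no degree bound, and then one runs the telescoping argument on the sequence $a_n := \mathrm{vol}(B_n)$ directly — from $a_n \le (2C'+1)\#B_n \le (2C'+1)Cn^2$ one gets $\liminf (a_{n+1}-a_n)/a_n = 0$, and $\#\partial B_n/\mathrm{vol}(B_n) \le (a_{n+1}-a_n)/a_n$ along the relevant subsequence. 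This keeps everything in terms of quantities already controlled.

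The main obstacle is essentially just making the telescoping argument rigorous: one must argue that a nonnegative increasing sequence with polynomial (hence subexponential) growth cannot have its consecutive differences bounded below by a fixed fraction of the sequence infinitely often. This is elementary — take logs and use that $\log a_n / n \to 0$ forces $\log a_{n+1} - \log a_n \to 0$ along a subsequence, whence $a_{n+1}/a_n \to 1$ there, and then $(a_{n+1}-a_n)/a_n \to 0$. Everything else is bookkeeping with the definitions of $\partial W$, $\mathrm{vol}$, and the already-established bounds.
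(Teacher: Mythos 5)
Your proof is correct and follows essentially the same route as the paper: take the balls $B_n$ as test sets and derive a contradiction between $\al>0$ (which would force exponential growth of the balls) and the quadratic bound of Theorem~\ref{t:grow_nonneg_curvature}. The only difference is in how the boundary is controlled: the paper uses the degree bound $d_v\le 6$ implied by $\ka\ge0$ via Proposition~\ref{p:ka_vs_deg} to get $\#\partial B_r\le 6\#S_r$ and then iterates, whereas you route everything through $\mathrm{vol}$ and the edge-counting lemma, which avoids any degree bound (only note that $\#\partial B_n\le \#E_{B_{n+1}}\le C'\#B_{n+1}$ rather than $C'\#B_n$, so your constant should involve $\#B_{n+1}$, which is still quadratic in $n$ and does not affect the argument).
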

\begin{proof}First of all notice that $\ka\leq0$ implies $d_{v}\leq 6$ by Proposition~\ref{p:ka_vs_deg}.  Thus, $\#\partial B_{r}\leq 6\#S_{r}$.
As $\mathrm{vol}(B_{r})\ge \#B_{r}\ge\#S_{r}+\#S_{r-1}$, we derive
\begin{align*}
    \al\leq \frac{\#\partial B_{r}} {\mathrm{vol}(B_{r})}\leq \frac{6 \# S_{r}}{\# S_{r}+\#S_{r-1}}.
\end{align*}
We set $a=\frac{\al/6}{1-\al/6}$ and observe that $a>1$ whenever $\al>0$.
We conclude by iteration
$a^{r}\leq \# S_{r}$
which stands in contradiction to  $\#B_{r}\leq C r^{2}$ of Theorem~\ref{t:grow_nonneg_curvature}.
\end{proof}

We consider now graphs with negative curvature. First, we mention that the isoperimetric constant can be calculated explicitly for $(p,q)$-regular tessellations. These formulas were independently obtained by \cite{HaJL} and \cite{HiShi} by quite different techniques.

\begin{thm}\cite{HaJL,HiShi}
  Let $G$ be a $(p,q)$-regular tessellation such that $\ka_{C}=\frac{1}{p}-\frac{1}{2} + \frac{1}{q} \le0 $. Then,
  $$ \al = \frac{q-2}{q} \sqrt{1 - \frac{4}{(p-2)(q-2)}}. $$
\end{thm}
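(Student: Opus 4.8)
The plan is to exploit the recursion-matrix machinery developed for $p$-face regular tessellations in Theorem~\ref{thm:facereggrow} and specialized to $(p,q)$-regular tessellations, where the growth of spheres is governed by the reciprocal Salem polynomial $g_{p,q}$. The key point is that for a $(p,q)$-regular tessellation with $\ka_C\le 0$ one has empty cut locus (by Theorem~\ref{t:absence_cut_locus}, since face-regularity makes non-positive corner curvature equivalent to non-positive vertex curvature), so the sphere sizes $\#S_r$ obey the exact $(N+1)$-step recursion with constant coefficients, and hence $\#S_r$ is, up to lower-order oscillating terms, proportional to $x_{p,q}^r$ where $x_{p,q}$ is the dominant (Perron) root of $g_{p,q}$. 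The first step is therefore to identify the isoperimetric optimizers: I would argue that the infimum defining $\al$ is asymptotically realized by the balls $B_r$, so that
\begin{align*}
    \al = \lim_{r\to\infty}\frac{\#\partial B_r}{\mathrm{vol}(B_r)}.
\end{align*}
Because the tessellation is vertex-transitive-like in its local combinatorics (constant $d_v=q$, constant $\deg(f)=p$), every edge from $S_r$ either goes to $S_{r+1}$ or stays within $S_r$, and one can count exactly how many of the $q$ edges at a vertex of $S_r$ point outward using the recursion structure; this expresses $\#\partial B_r$ and $\mathrm{vol}(B_r)=q\#B_r$ in terms of the $\#S_j$.

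The second step is the computation of the resulting limit. Writing $\#\partial B_r$ as a linear combination of $\#S_r$ and $\#S_{r-1}$ (with coefficients determined by the $b_l$ and by how edges distribute between consecutive spheres) and $\mathrm{vol}(B_r)=q\sum_{j\le r}\#S_j$, and using that $\#S_j\sim c\,x_{p,q}^{\,j}$ so that $\sum_{j\le r}\#S_j\sim \#S_r\cdot x_{p,q}/(x_{p,q}-1)$, the ratio converges to an explicit rational expression in $x_{p,q}$, $p$, and $q$. One then needs the algebraic identity relating the Perron root $x_{p,q}$ of $g_{p,q}$ to the quantity under the square root: from $g_{p,q}(x_{p,q})=0$ one extracts that $x_{p,q}+1/x_{p,q}$ equals an explicit function of $p$ and $q$ (for $p$ even this comes from summing the geometric-type series $1-(q-2)(z+\dots+z^N)+z^{N+1}=0$), and a short manipulation turns $x_{p,q}-1/x_{p,q}$ into $(q-2)\sqrt{1-4/((p-2)(q-2))}$ up to the factor $q/(q-2)$ needed to match the claimed formula. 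This is where the two ``different techniques'' of \cite{HaJL} and \cite{HiShi} converge: one side is a direct combinatorial edge-count on balls, the other is spectral-radius-type reasoning, but both reduce to analyzing $g_{p,q}$.

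The third step is to prove that balls are genuinely optimal, i.e. that no finite set does better than the limiting value above — this I expect to be the main obstacle. The inequality $\al\le \lim_r \#\partial B_r/\mathrm{vol}(B_r)$ is immediate; the reverse requires a true isoperimetric argument. Here I would follow the Cheeger-type strategy: given any finite $W$, one shows $\#\partial W \ge c\cdot \mathrm{vol}(W)$ with $c$ equal to the ball value, either by a ``rounding'' argument that replaces $W$ by a union of balls without increasing the ratio, or by the combinatorial curvature-flow / discharging technique used in \cite{HaJL,HiShi} that pushes boundary mass outward using negativity of $\ka_C$. The non-positive corner curvature assumption is exactly what makes distance balls convex in the admissibility sense of \cite{BP2}, and convexity of balls is the standard mechanism that makes them isoperimetrically extremal in a space of negative curvature; translating this convexity into the sharp constant, rather than just positivity of $\al$, is the delicate part. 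Since the statement is quoted from \cite{HaJL,HiShi}, in the write-up I would present the ball computation in detail and refer to those papers for the matching lower bound, noting that it is precisely the sharp isoperimetric estimate that distinguishes these results from the softer positivity statements discussed later in the section.
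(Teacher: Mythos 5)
Your proposal breaks at its first step: for these hyperbolic tessellations the balls $B_r$ do \emph{not} asymptotically realize the infimum defining $\al$, so the limit you compute is not the isoperimetric constant. Carry out your own edge count for $(p,q)=(4,5)$: all edges join consecutive spheres, $\#S_{r+1}=3\#S_r-\#S_{r-1}$, hence $\#S_r\sim c\,x^r$ with $x=x_{4,5}=(3+\sqrt{5})/2$, and $\#\partial B_r=q\#S_r-\#\partial B_{r-1}$, which gives
\begin{align*}
\frac{\#\partial B_r}{\mathrm{vol}(B_r)}\;\longrightarrow\;\frac{x-1}{x+1}=\frac{1}{\sqrt{5}}\approx 0.447,
\qquad\text{whereas}\qquad
\al=\frac{3}{5}\sqrt{1-\tfrac{4}{6}}=\frac{\sqrt{3}}{5}\approx 0.346.
\end{align*}
In general your limit is $\frac{x_{p,q}-1}{x_{p,q}+1}$, which agrees with the claimed formula only in the tree limit $p\to\infty$; correspondingly, the algebraic identity you hope to extract from $g_{p,q}(x_{p,q})=0$ already fails for $p=4$, where $x-1/x=\sqrt{q(q-4)}$ while the formula requires $\sqrt{(q-2)(q-4)}$. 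So the ``main obstacle'' you flag in step 3 --- showing that no finite set beats the balls --- is not merely delicate, it is false: the minimizing sequences (constructed explicitly in \cite{HiShi}) have strictly smaller ratio than every ball. The continuum intuition that large balls are asymptotically Cheeger-optimal in negative curvature does not transfer here, because $\#\partial B_r$ counts edges and is not the ``derivative'' of $\mathrm{vol}(B_r)$ in the relevant sense; this is a genuinely discrete discrepancy, and it invalidates both your step 1 and step 3.

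The paper's argument (following \cite{HaJL}) takes an entirely different route in which the growth recursion and the Salem polynomial play no role. One introduces $\beta(G)=\inf_W \#W/\#E_W$ and $\delta(G)=\inf_W \#W/(\#E_W+\#\partial W)$, observes that $q$-regularity gives $\beta(G)=2/(q(1-\al))$ and $\delta(G)=2/(q(1+\al))$, shows that minimizers must grow when $\al>0$, and then proves the duality identity $\beta(G)+\delta(G^*)=1$ for the dual tessellation $G^*$, which is $(q,p)$-regular. Applying the identity to $G$ and to $G^*$ yields two equations in $\al(G)$ and $\al(G^*)$ whose solution is the stated formula. If you want to salvage a direct combinatorial computation, you would have to replace balls by the correct extremal sets, which is essentially the content of \cite{HiShi}.
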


While \cite{HiShi} give a rather explicit construction of the minimizing sets, the authors of \cite{HaJL} use duality. We give a rough sketch of the proof.
\begin{proof}[Idea of proof \cite{HaJL}] Recall that $E_{W}$ are the edges with both vertices in $W$ and consider
\begin{align*}
    \beta(G)&= \inf_{W\subseteq V \mbox{\scriptsize{ finite}}}\frac{\#W}{\#E_W},\\
    \delta(G)&= \inf_{W\subseteq V \mbox{\scriptsize{ finite}}} \frac{\#W}{\#E_W+\#\partial W}.
\end{align*}
It can be observed that
\begin{align*}
    \beta(G)=\frac{2}{q(1-\al)}\quad\mbox{and}\quad\delta(G) =\frac{2}{q(1+   \al)}.
\end{align*}
For the proof of the theorem one needs to show that the minimizing sets for $\al,\beta$ and $\delta$  have to grow whenever $\al>0$. Knowing this the major step in the proof is to show the relation
\begin{align*}
    \beta(G)+\delta(G^{*})=1,
\end{align*}
where $G^{*}$ is the dual tessellation.
Resolving this formula with the equations for $\beta$ and $\delta$ above yields the statement.
\end{proof}

We now look at the non-regular case, where we have a uniform upper bound on the curvature. In this case we get explicit estimates on $\al$. Although, these estimates are not sharp for planar tessellations, they are sharp if one considers sequences of tessellations where the face degrees grow to infinity. This is discussed in \cite{KP}.

Whenever the face degree is bounded by some $p$ and the vertex degree is bounded by some $q$ the following constant $C_{p,q}\ge1$ will enter the estimate of the isoperimetric constant below
\begin{equation*}
C_{p,q}:= \begin{cases} 1 &:\; \text{if $p= \infty$}, \\
1 + \frac{2}{p-2} &:\; \text{if $p < \infty$ and $q=\infty$}, \\
\Big(1 + \frac{2}{p-2}\Big) \Big(1 + \frac{2}{(p-2)(q-2)-2}\Big) &:\; \text{if
$p,q < \infty$}.
\end{cases}
\end{equation*}

\begin{thm} \label{t:curv:cheegest} \cite[Theorem~1]{KP}
  Let $G$ be a planar tessellation such that
  $\deg(f) \le p$ for all $f \in F$ and $d_{v} \le q$ for all $ v \in V$  with $p,q\in[3,\infty]$.
  Assume $\ka<0$ and let $K := \inf_{v \in V} -\frac{1}{d_{v}}\ka(v)$. Then
    $$ \al \ge 2 C_{p,q}K. $$
\end{thm}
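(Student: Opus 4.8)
The plan is to produce, for an arbitrary finite set $W \subseteq V$, a lower bound on $\#\partial W$ in terms of $\mathrm{vol}(W)$ by comparing the combinatorial boundary of $W$ with the curvature carried by the faces that $W$ "uses." The starting point is the local lower bound on curvature recorded at each vertex: by hypothesis $-\frac{1}{d_v}\ka(v) \ge K$, i.e. $\ka(v) \le -K d_v$, so summing over $W$ gives $\sum_{v \in W}\ka(v) \le -K\,\mathrm{vol}(W)$. The idea is then to play this off against a Gau\ss-Bonnet-type accounting on the subtessellation (or rather the subcomplex) induced by $W$: the total curvature of $W$ equals a topological/Euler term for the region occupied by $W$ plus a correction coming from the boundary edges and the faces that meet $W$ only partially. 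Since the region is (a disjoint union of) planar pieces, the Euler term is controlled, and the dominant contribution on the right-hand side must be the boundary term, which is of size comparable to $\#\partial W$. The constants $C_{p,q}$ enter precisely here: a face of degree at most $p$ that touches $W$ contributes a bounded amount of "missing" curvature per boundary edge, and when both $p$ and $q$ are finite one picks up the extra factor $1+\frac{2}{(p-2)(q-2)-2}$ from bounding how many vertices of such a face can already lie inside $W$.

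Concretely, I would proceed as follows. First, let $F_W$ denote the faces incident to at least one vertex of $W$, and split the identity $\ka(v)=\sum_{(v,f)\in C(G)}\ka_C(v,f)$ so that $\sum_{v\in W}\ka(v) = \sum_{v \in W}\sum_{f \ni v} \big(\tfrac{1}{d_v}-\tfrac12+\tfrac1{\deg(f)}\big)$. Reorganize the sum over faces: for each $f \in F_W$ let $k_f = \#(V(f)\cap W)$, the number of vertices of $f$ lying in $W$. A face with $k_f = \deg(f)$ (an "interior" face of $W$) contributes, via the corner sum, a quantity that after using $\sum_{v\in W}d_v=2\#E_W+\#\partial W$ and Euler's formula for planar regions telescopes into a bounded topological term; a face with $1 \le k_f < \deg(f)$ is "cut" by the boundary and contributes a genuinely negative corner-curvature deficit $\sum_{v \in f\cap W}(\tfrac1{d_v}-\tfrac12+\tfrac1{\deg(f)})$. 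Second, bound the number of such cut faces and the size of each deficit: each cut face $f$ meets $\partial W$ in at least one edge, and the number of corners of $f$ inside $W$ is at most $k_f \le \deg(f)-1 \le p-1$; turning this around, the number of boundary edges contributed by $f$ is at least $\deg(f)-k_f$, and a counting of boundary edges against cut faces (each boundary edge lies in exactly two faces, at least one of which is cut) produces the comparison $\#\partial W \ge c\,\sum_{f \text{ cut}}(\deg(f)-k_f)$ with $c$ depending on $p,q$. Third, assemble: the inequality $\sum_{v\in W}\ka(v)\le -K\,\mathrm{vol}(W)$ combined with the face decomposition forces the cut-face deficit to absorb at least $K\,\mathrm{vol}(W)$ (the interior topological term being nonnegative, or at worst bounded in the right direction since each planar component contributes at most $1$), and the edge–face comparison then converts this into $\#\partial W \ge 2C_{p,q}K\,\mathrm{vol}(W)$. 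Taking the infimum over $W$ yields $\al \ge 2C_{p,q}K$.

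The main obstacle is the bookkeeping in the second step: correctly relating $\#\partial W$ to the combinatorial data $(\deg(f)-k_f)$ of the cut faces, and extracting the sharp constant $C_{p,q}$ with its three cases. The subtlety is that a boundary edge is shared by two faces and either or both may be cut, and a cut face may contribute several boundary edges and several interior vertices; one must argue that the "worst case" for the ratio is governed exactly by a face of degree $p$ at vertices of degree $q$, which is why $C_{p,q}$ degenerates to $1$ when $p = \infty$ (no control on the deficit per edge) and gains successive correction factors as $p$ and then $q$ become finite. I would also need to handle the case where the induced region on $W$ is disconnected or not simply connected — but since the surface has finite genus and $W$ is finite, the Euler contribution of each component is bounded by $1$, which is negligible against $K\,\mathrm{vol}(W)$ once $W$ is large, and small $W$ can be absorbed into the constant. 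The remaining steps (the local curvature bound, the volume/edge identities, and the final infimum) are routine given the earlier results in the chapter.
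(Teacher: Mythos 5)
Your approach is essentially the paper's: the ``Gau\ss-Bonnet-type accounting'' you describe is exactly the formula attributed to Derksen (Lemma~\ref{l:Derksen}), from which $\#\partial W\ge -2\sum_{v\in W}\ka(v)\ge 2K\,\mathrm{vol}(W)$ follows at once for simply connected $W$ via $\ka(v)\le -Kd_v$, and, like you, the paper defers the extraction of the extra factor $C_{p,q}\ge 1$ to ``further rather subtle considerations'' carried out in \cite{KP}. One small caveat inside the bookkeeping step you already flag as the main obstacle: a cut face does not contribute at least $\deg(f)-k_f$ boundary edges (it contributes an even number of crossing edges, possibly only $2$, regardless of $k_f$), so that particular comparison would need to be replaced by the sharper face--edge counting of \cite{KP}.
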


A key insight for the proof is a formula which is attributed in \cite{BP1} to Harm Derksen. It is an immediate consequence of the Gau\ss-Bonnet theorem and direct calculation. It can be interpreted that the formula for the curvature of a simply connected finite set has the same form as the curvature of a vertex. Here, we call a set $W\subseteq V$ \emph{simply connected} if $W$ and $V\setminus W$ are connected.

\begin{lemma}\label{l:Derksen}\cite[Proposition~2.1.]{BP1} Let $W\subseteq V$ be a finite simply connected subset of a planar tessellation. Then,
\begin{align*}
    \sum_{v\in W}\ka(v)=1-\frac{\#\partial W}{2}+\sum_{f\in F, f\cap W\neq\emptyset, f\cap(V\setminus W)\neq \emptyset}\frac{\#(f\cap W)}{\deg(f)}.
\end{align*}
\end{lemma}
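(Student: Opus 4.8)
\textbf{Proof plan for Lemma~\ref{l:Derksen}.}

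The plan is to derive the formula for $W$ by summing the vertex-curvature definition over $v\in W$ and carefully reorganizing the corner sum, exactly mimicking the proof of the Gau\ss-Bonnet Theorem~\ref{t:GaussBonnet} but keeping track of which faces are "interior" to $W$ (all their vertices lie in $W$) and which are "boundary" faces (they meet both $W$ and $V\setminus W$). First I would write
\begin{align*}
    \sum_{v\in W}\ka(v)=\sum_{v\in W}\Big(1-\frac{d_v}{2}\Big)+\sum_{v\in W}\sum_{f\in F,\,v\in f}\frac{1}{\deg(f)}
    =\#W-\sum_{v\in W}\frac{d_v}{2}+\sum_{f\in F}\frac{\#(f\cap W)}{\deg(f)}.
\end{align*}
The last double sum splits as a sum over faces $f$ with $f\cap W=\emptyset$ (contributing $0$), faces entirely contained in $W$ (contributing $1$ each, since then $\#(f\cap W)=\deg(f)$), and boundary faces (contributing $\#(f\cap W)/\deg(f)$). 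Denote by $F_W$ the set of interior faces and by $\partial F$ the set of boundary faces; so far
\begin{align*}
    \sum_{v\in W}\ka(v)=\#W-\sum_{v\in W}\frac{d_v}{2}+\#F_W+\sum_{f\in\partial F}\frac{\#(f\cap W)}{\deg(f)}.
\end{align*}

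The next step is to identify $\#W-\sum_{v\in W}d_v/2+\#F_W$ with $1-\#\partial W/2$. For this I would apply Euler's formula to the subcomplex "induced" by $W$, i.e.\ the tessellation-like structure $G_W=(W,E_W,F_W\cup\{f_\infty\})$ consisting of the vertices of $W$, the edges $E_W$ with both endpoints in $W$, the interior faces $F_W$, together with one outer face $f_\infty$ obtained by collapsing everything outside. Since $W$ and $V\setminus W$ are connected (simple connectivity), this complex is a tessellation of the sphere, so $\#W-\#E_W+(\#F_W+1)=2$, i.e.\ $\#W-\#E_W+\#F_W=1$. Now I replace $\#E_W$ using the edge–degree bookkeeping: $\sum_{v\in W}d_v=2\#E_W+\#\partial W$ (each edge inside $W$ is counted twice, each boundary edge once), so $\sum_{v\in W}d_v/2=\#E_W+\#\partial W/2$. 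Substituting,
\begin{align*}
    \#W-\sum_{v\in W}\frac{d_v}{2}+\#F_W=\#W-\#E_W-\frac{\#\partial W}{2}+\#F_W=1-\frac{\#\partial W}{2},
\end{align*}
which combined with the previous display gives exactly the claimed identity (the boundary-face sum in the statement is precisely $\sum_{f\in\partial F}\#(f\cap W)/\deg(f)$).

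The main obstacle is the topological step: justifying that $G_W$ really is a tessellation of $\mathbb{S}^2$ so that Euler's formula (equivalently Poincar\'e's formula at $g=0$, as in Theorem~\ref{t:GaussBonnet}) applies. This is where the simple-connectivity hypothesis ($W$ and $V\setminus W$ both connected) is essential: connectivity of $W$ makes the induced edge-graph connected, and connectivity of $V\setminus W$ ensures the complement region is a single disc, so collapsing it yields a single well-defined outer face rather than several, and no spurious handles are created. One must also check the mild point that every interior face of $G$ contained in $W$ survives as a genuine disc face and that boundary edges of $W$ form the boundary of the outer face; this is a direct consequence of the tessellation axioms (T1)--(T3) for $G$. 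Alternatively — and this is perhaps the cleanest route to present — one can bypass building $G_W$ explicitly and instead invoke the Gau\ss-Bonnet Theorem directly on the finite spherical tessellation obtained by capping off $W$, reading off $\sum_{v\in W}\ka(v)+(\text{curvature of the single capping vertex/region})=\chi(\mathbb{S}^2)=2$ and computing the capping contribution to be $1+\#\partial W/2-\sum_{f\in\partial F}\#(f\cap W)/\deg(f)$; rearranging gives the lemma. Either way the arithmetic is the routine part and the topology is the only thing needing care.
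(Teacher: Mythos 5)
Your proposal is correct and matches the paper's (only sketched) justification, which states that the lemma is an immediate consequence of the Gau\ss-Bonnet/Euler formula and direct calculation: your reorganization of the corner sum, the identity $\sum_{v\in W}d_v=2\#E_W+\#\partial W$, and the application of Euler's formula to the capped-off complex $G_W$ (using simple connectivity of $W$ to get exactly one outer face) is precisely that direct calculation, carried out in full. The topological point you flag is indeed the only delicate step, and your handling of it is adequate.
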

From this lemma, one immediately derives the estimate
\begin{align*}
    \frac{\#\partial W}{\mathrm{vol}(W)}\ge\frac{  -2\sum_{v\in W}\ka(v) }{\mathrm{vol}(W)}\ge2K.
\end{align*}
To obtain the statement of Theorem~\ref{t:curv:cheegest} on still has to squeeze in $C_{p,q}\ge1$. This needs some further rather subtle considerations.

Next, we turn to a result of Woess \cite{Woe}. It states that if the asymptotic curvature  defined as
\begin{align*}
    \ov\ka=\lim_{n\to\infty}\sup_{W\subseteq V, \# W\geq n}\frac{1}{\#W}\sum_{v\in W}\ka(v)
\end{align*}
is negative, then the isoperimetric constant is positive.
The limit exists or is $-\infty$  as the sequence is monotone decreasing.

\begin{thm}\label{t:iso_neg_av_curvature}\cite[Theorem~A]{Woe} Let $G$ be a planar  tessellation such that $\ov\ka<0$. Then, $\al>0$.
\end{thm}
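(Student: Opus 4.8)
The key tool is Derksen's formula (Lemma~\ref{l:Derksen}), which says that for a finite simply connected set $W$ one has
\begin{align*}
    \sum_{v\in W}\ka(v)=1-\frac{\#\partial W}{2}+\sum_{f}\frac{\#(f\cap W)}{\deg(f)},
\end{align*}
where the sum is over faces meeting both $W$ and its complement. The strategy is to reduce to simply connected sets, then play the hypothesis $\ov\ka<0$ against this formula. Since $\ov\ka<0$, there is $n_{0}$ and $c>0$ such that $\frac{1}{\#W}\sum_{v\in W}\ka(v)\le -c$ for all $W$ with $\#W\ge n_{0}$; equivalently $\sum_{v\in W}\ka(v)\le -c\,\#W$ for such $W$. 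I would first dispose of the small sets: there are only finitely many $W$ with $\#W<n_{0}$ (up to graph automorphism is not enough, but one can still bound $\#\partial W/\mathrm{vol}(W)$ from below by a positive constant on this finite family, or simply note that $\#\partial W\ge 1$ and $\mathrm{vol}(W)\le q\,\#W< q\,n_{0}$ is not available without a degree bound — so instead I would handle all sets uniformly, see below). So the real content is the infinite tail.

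\textbf{Reduction to simply connected sets.} For an arbitrary finite $W$, I would pass to $\widehat W$, the union of $W$ with all bounded connected components of $V\setminus W$ (``filling in the holes''). Then $\widehat W\supseteq W$, $\#\partial\widehat W\le\#\partial W$, and $\mathrm{vol}(\widehat W)\ge\mathrm{vol}(W)$, so $\#\partial\widehat W/\mathrm{vol}(\widehat W)\le\#\partial W/\mathrm{vol}(W)$; hence it suffices to bound the ratio below over simply connected $W$. (One must check $\widehat W$ is again simply connected in the sense used in Lemma~\ref{l:Derksen}, i.e. both $\widehat W$ and its complement connected — the complement of $\widehat W$ is the unbounded component, which is connected; $\widehat W$ itself is connected since $W$ is, after possibly also restricting to a single connected component of $W$, which only decreases the ratio again.)

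\textbf{Estimating via Derksen.} For simply connected $W$, the face-sum in Derksen's formula is nonnegative, and each face meeting $W$ contains at most $\deg(f)-1$ vertices of $W$ (it meets the complement too), so
\begin{align*}
    \sum_{f}\frac{\#(f\cap W)}{\deg(f)}\le \#\{f: f\cap W\neq\emptyset,\ f\cap(V\setminus W)\neq\emptyset\}.
\end{align*}
Every such face contains a boundary edge (an edge of $\partial W$ viewed geometrically), and each boundary edge lies in two faces, so this count is at most $2\#\partial W$ — actually each edge leaving $W$ corresponds to an oriented edge and one gets a bound like $\#\partial W$ on the relevant faces. Combining, $\sum_{v\in W}\ka(v)\ge 1-\tfrac{1}{2}\#\partial W$, hence $\#\partial W\ge 2-2\sum_{v\in W}\ka(v)\ge 2+2c\,\#W$ once $\#W\ge n_{0}$. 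On the other hand $\mathrm{vol}(W)=2\#E_{W}+\#\partial W$, and by Poincaré's formula for planar tessellations $\#E_{W}\le 3\#W$ (as in the volume-growth lemma with $g=0$), so $\mathrm{vol}(W)\le 6\#W+\#\partial W\le (6/c)\,\tfrac{1}{2}\#\partial W\cdot\text{(const)}+\#\partial W$ — more cleanly, $\#W\le\tfrac{1}{2c}\#\partial W$, so $\mathrm{vol}(W)\le 6\#W+\#\partial W\le\big(\tfrac{3}{c}+1\big)\#\partial W$, giving $\#\partial W/\mathrm{vol}(W)\ge \big(\tfrac{3}{c}+1\big)^{-1}>0$. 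Finally, for the finitely many simply connected $W$ with $3\le\#W<n_{0}$, the ratio is a positive number (the infimum over a finite set of positive quantities), and singletons give ratio $d_{v}/d_{v}=1$; take the minimum of this with the tail bound.

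\textbf{Main obstacle.} The delicate point is the counting of boundary faces and the passage to simply connected sets in a way compatible with Lemma~\ref{l:Derksen}: one needs that the boundary faces can be charged to edges of $\partial W$ with bounded multiplicity, and that ``filling holes'' does not wreck the hypotheses — this is where planarity is genuinely used (on a higher-genus surface filling in a handle-wrapping cycle is not possible). A secondary subtlety is that $\ov\ka<0$ controls averages of $\ka$ over \emph{all} large sets, so no automorphism or regularity assumption is needed; the only place one invokes the ambient structure is the uniform bound $\#E_{W}\le 3\#W$ from Euler's formula. Everything else is bookkeeping.
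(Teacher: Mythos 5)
Your proof follows exactly the route the paper indicates: the whole content is Lemma~\ref{l:Derksen}, from which (using only that the face sum is nonnegative) one gets $\#\partial W\ge 2-2\sum_{v\in W}\ka(v)\ge 2c\,\#W$ for simply connected $W$ with $\#W\ge n_{0}$, and then $\mathrm{vol}(W)=2\#E_{W}+\#\partial W\le 6\#W+\#\partial W$ closes the estimate. Your reduction to connected, hole-filled sets is also sound (filling holes can only decrease the boundary and increase the volume, and the filled set is simply connected in the sense of the lemma because a plane tessellation is one-ended); the upper bound on the face sum in your ``Estimating via Derksen'' paragraph is never used and can be deleted.

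The one step that is false as written is the disposal of the small sets: in an infinite graph there are \emph{infinitely} many simply connected $W$ with $\#W<n_{0}$, so ``the infimum over a finite set of positive quantities'' is not an argument --- and you had in fact flagged precisely this difficulty in your opening paragraph before reverting to it at the end. The gap closes in one line and needs no degree bound: for $\#W<n_{0}$ one has $\#E_{W}\le 3\#W<3n_{0}$ by planarity and $\#\partial W\ge 1$ since the graph is infinite and connected, hence $\#\partial W/\mathrm{vol}(W)=\#\partial W/(2\#E_{W}+\#\partial W)\ge 1/(6n_{0}+1)$. With that inserted, the argument is complete and agrees in substance with the proof the paper sketches.
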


Again the result can be proven by the formula in Lemma~\ref{l:Derksen} above which is however not stated explicitly in \cite{Woe}. Preceding the result above, Dodziuk proved in \cite{Do} that planar graphs with $d_{v}\ge7$, $v\in
V$,  satisfy $\al>0$. In particular, this assumption
implies $\ka<0$ by Proposition~\ref{p:ka_vs_deg}.

By considerations of Oh \cite{Oh}, we see that planar tessellations satisfying $\ov \ka<0$ are Gromov hyperbolic provided there is an upper bound on the face degree. A metric space is called \emph{Gromov hyperbolic} if there is $\delta>0$ such that  every geodesic triangle $T$ is $\delta$-thin,  i.e., any
side of $T$ is contained in the $\delta$-neighborhood of the union of the other two sides. For graphs we consider  the combinatorial graph distance $d$ as a metric.

\begin{cor}\cite{Oh} Let $G$ be a planar  tessellation such that $\ov\ka<0$ and assume there is an upper bound on the face degree. Then, $(G,d)$ is Gromov hyperbolic.
\end{cor}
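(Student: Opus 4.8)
The plan is to deduce Gromov hyperbolicity from a linear isoperimetric inequality, using the standard equivalence between such an inequality and hyperbolicity for graphs. First I would invoke Theorem~\ref{t:iso_neg_av_curvature}, which gives $\al>0$ under the hypothesis $\ov\ka<0$; this is the combinatorial linear isoperimetric inequality $\#\partial W\ge\al\,\mathrm{vol}(W)\ge\al\,\#W$ for every finite $W\subseteq V$. The point is that a linear isoperimetric inequality of this shape is well known to imply Gromov hyperbolicity \emph{provided} one also controls local geometry; here the control is exactly the upper bound on the face degree together with Proposition~\ref{p:ka_vs_deg}, which from $\ka<0$ (a consequence of $\ov\ka<0$ after passing to large sets, or directly from $\al>0$) forces the vertex degrees to be uniformly bounded. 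Thus $G$ has bounded geometry: both $\sup_v d_v$ and $\sup_f\deg(f)$ are finite.

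The key steps, in order, are as follows. Step one: record that $\ov\ka<0$ yields $\al>0$ and hence a linear isoperimetric inequality for vertex sets. Step two: observe that bounded face degree plus $\ka<0$ gives, via Proposition~\ref{p:ka_vs_deg} ($\ka(v)\le 1-d_v/6$ so $d_v\le 6$ once $\ka(v)\le 0$), a uniform bound $q$ on the vertex degree; since also faces are $p$-gons with $p$ bounded, the $1$-skeleton is quasi-isometric to the tessellation viewed as a $2$-complex, i.e.\ filling $2$-cells (faces) of bounded size. Step three: the linear isoperimetric inequality for finite vertex sets upgrades, by a standard argument (passing from the dual notion of area — number of faces enclosed by a loop — to edge boundary using the bounded face degree), to a \emph{linear isoperimetric inequality for loops}: every closed edge path of length $n$ bounds a van Kampen–type disc diagram using at most $O(n)$ faces. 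Step four: apply the classical theorem (Gromov; see also Bridson–Haefliger) that a simply connected complex with bounded geometry satisfying a linear isoperimetric inequality for loops is Gromov hyperbolic, and transport this back to $(G,d)$ via the quasi-isometry in Step two. This is essentially the route taken by Oh~\cite{Oh}.

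I expect the main obstacle to be Step three: translating the vertex-set isoperimetric constant $\al>0$ into a genuine \emph{linear filling inequality} for cycles in the planar tessellation. The subtlety is that $\al$ measures $\#\partial W/\mathrm{vol}(W)$ for arbitrary finite vertex sets, whereas hyperbolicity wants the area of a minimal disc spanning a cycle to grow at most linearly in the cycle's length; one must argue that a region enclosed by a short loop cannot have large volume, which is where positivity of $\al$ (equivalently, non-amenability in the strong, edge-isoperimetric sense) combined with planarity and the upper face-degree bound does the work — the bounded face degree ensures the combinatorial area (number of faces) and the vertex count of the enclosed region are comparable. The remaining ingredients — the degree bound from Proposition~\ref{p:ka_vs_deg}, the quasi-isometry between skeleton and complex, and the Gromov criterion itself — are standard and can be cited rather than reproved. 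Since the excerpt explicitly attributes the statement to Oh~\cite{Oh}, it suffices to indicate that the linear isoperimetric inequality of Theorem~\ref{t:iso_neg_av_curvature}, under the face-degree bound, meets the hypotheses of Oh's criterion.
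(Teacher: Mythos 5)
There is a genuine gap in your Step two. You claim that Proposition~\ref{p:ka_vs_deg} yields ``$\ka(v)\le 1-d_v/6$ so $d_v\le 6$ once $\ka(v)\le 0$'' and hence that the vertex degrees are uniformly bounded. This reads the inequality backwards: $\ka(v)\le 1-d_v/6$ says that \emph{large degree forces non-positive curvature}, not that non-positive curvature forces small degree. In fact the proposition states $d_{v_n}\to\infty$ if and only if $\ka(v_n)\to-\infty$, so negative curvature is perfectly compatible with unbounded vertex degree, and the hypotheses of the corollary ($\ov\ka<0$ plus a bound on the \emph{face} degree only) allow $\sup_v d_v=\infty$. (The parenthetical claim that $\ov\ka<0$ or $\al>0$ implies $\ka<0$ pointwise is also false; only averages over large sets are controlled.) Since your Steps three and four lean on bounded geometry --- the quasi-isometry between the $1$-skeleton and the $2$-complex with cells of bounded size, and Gromov's filling criterion for complexes of bounded geometry --- the argument as written does not go through in the generality of the statement. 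This is not a cosmetic issue: the separate remark in the text that Cao's converse (hyperbolicity implies $\al>0$) requires bounded \emph{vertex} degree shows that the two degree bounds play genuinely different roles here.

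The paper's own proof avoids this entirely: it quotes Woess (Theorem~\ref{t:iso_neg_av_curvature}) to get $\al>0$, then cites two results of Oh --- \cite[Theorem~1]{Oh}, which shows that $\al>0$ is equivalent to positivity of a combinatorial constant $\ka(G)$ defined in \cite{Oh}, and \cite[Theorem~6]{Oh}, which shows that $\ka(G)>0$ implies Gromov hyperbolicity for planar graphs whose faces are closed discs. Your first step coincides with the paper's, and your closing sentence (reduce to Oh's criterion) is in the right spirit; but the intermediate van Kampen--diagram route you sketch would need either a correct source of a vertex-degree bound (which the hypotheses do not provide) or a version of the filling argument that tolerates unbounded vertex degree --- which is precisely the content of Oh's theorems that the paper delegates to \cite{Oh}.
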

\begin{proof} By \cite[Theorem~6]{Oh} we see that a planar graph is Gromov hyperbolic if a constant called $\ka(G)$, which is defined in \cite{Oh}, is positive. By \cite[Theorem 1]{Oh} this constant $\ka(G)$ is positive if and only if our isoperimetric constant  $\al$ is positive. This equivalence holds true for all planar graphs whose faces are homeomorphic to a closed disk, an assumption which is satisfied for tessellations.
\end{proof}

To complement this result we remark that by a result of Cao \cite{Cao} one can check that Gromov hyperbolicity of a planar tessellation with bounded vertex degree implies $\al>0$. For details see \cite[Proof of Theorem 3.8]{KPP}.

We end this section by discussing an asymptotic isoperimetric constant called the isoperimetric constant at infinity. This constant is discussed in \cite{Fu2,Mo91,K1}. We define the \emph{isoperimetric constant at infinity} to be
\begin{align*}
        \al_{\infty}=\sup_{K\subset V \mbox{{\scriptsize finite}}}\inf_{W\subset V\setminus K \mbox{{ \scriptsize finite}}}\frac{\#\partial W}{\mathrm{vol}(W)}.
\end{align*}
Clearly, $\al\leq\al_{\infty}$. Furthermore, it is discussed in \cite[Section~6.3]{BHK} that $\al>0$ if and only if $\al_{\infty}>0$. The arguments employed there use basic spectral theory.

Accordingly, we define an upper bound on the \emph{curvature at infinity} by
\begin{align*}
    \ka_{\infty}=\inf_{K\subset V \mbox{{\scriptsize finite}}}\sup_{v\in V\setminus K}\ka(v).
\end{align*}

For  the curvature at infinity and the isoperimetric constant at infinity can be related as a consequence of \cite{K1} and Proposition~\ref{p:ka_vs_deg}.

\begin{thm}\label{t:iso_neg_av_curvature}\cite[Proposition~6]{K1} Let $G$ be a planar  tessellation such that $\ka_{\infty}=-\infty$. Then, $\al_{\infty}=1$.
\end{thm}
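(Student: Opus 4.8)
The statement to prove is: if $G$ is a planar tessellation with $\ka_\infty = -\infty$, then $\al_\infty = 1$. Recall $\ka_\infty = \inf_K \sup_{v\in V\setminus K}\ka(v)$, so $\ka_\infty = -\infty$ means that for every $m>0$ there is a finite set $K_m\subseteq V$ with $\ka(v)\le -m$ for all $v\in V\setminus K_m$. Since $\al_\infty \le 1$ always holds (by the same computation as for $\al$, using $\mathrm{vol}(W) = 2\#E_W + \#\partial W \ge \#\partial W$), it suffices to prove $\al_\infty \ge 1 - \eps$ for every $\eps>0$, i.e. that outside a sufficiently large finite set every finite set $W$ satisfies $\#\partial W \ge (1-\eps)\,\mathrm{vol}(W)$.

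\textbf{Key steps.} First I would translate the curvature hypothesis into a degree hypothesis via Proposition~\ref{p:ka_vs_deg}: the inequality $\ka(v)\le 1 - d_v/6$ shows that $\ka(v)\to -\infty$ forces $d_v\to\infty$, so for every $n$ there is a finite $K_n$ with $d_v \ge n$ for all $v\in V\setminus K_n$. Second, I would work with a finite set $W\subseteq V\setminus K_n$ and estimate its edge boundary from below. The natural tool is Lemma~\ref{l:Derksen} (Derksen's formula): for a finite simply connected $W$,
\begin{align*}
\sum_{v\in W}\ka(v) = 1 - \frac{\#\partial W}{2} + \sum_{\substack{f\in F,\ f\cap W\neq\emptyset,\ f\cap(V\setminus W)\neq\emptyset}}\frac{\#(f\cap W)}{\deg(f)}.
\end{align*}
Rearranging gives $\#\partial W = 2 - 2\sum_{v\in W}\ka(v) + 2\sum_f \#(f\cap W)/\deg(f)$, and dropping the nonnegative boundary-face sum yields $\#\partial W \ge -2\sum_{v\in W}\ka(v) \ge 2m\,\#W$ when $W\subseteq V\setminus K_m$. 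Third, I need to pass from $\#W$ to $\mathrm{vol}(W)$. Here the bound $\mathrm{vol}(W) = \sum_{v\in W}d_v$ works in our favour only if we control $d_v$ from above, which we cannot. Instead I would split off the edge-interior: from $\mathrm{vol}(W) = 2\#E_W + \#\partial W$ and the planarity/genus bound $\#E_W \le C\#W$ (the Poincar\'e-formula estimate used in the volume-growth lemma, with $C=3$ in the planar case), we get $\mathrm{vol}(W) \le 2C\#W + \#\partial W$. Combining with $\#\partial W \ge 2m\,\#W$ gives $\#W \le \#\partial W/(2m)$ and hence $\mathrm{vol}(W) \le (C/m)\#\partial W + \#\partial W = (1 + C/m)\#\partial W$, so $\#\partial W/\mathrm{vol}(W) \ge 1/(1 + C/m) \to 1$ as $m\to\infty$. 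Taking $K = K_m$ with $m$ large enough that $1/(1+C/m) > 1-\eps$ shows $\al_\infty \ge 1-\eps$, and letting $\eps\to 0$ finishes.

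\textbf{The main obstacle.} The routine parts are the degree estimate and the algebra; the one genuine subtlety is the simple-connectedness hypothesis in Derksen's formula, since an arbitrary finite $W\subseteq V\setminus K$ need not have $W$ and $V\setminus W$ connected. The standard fix is a reduction: one may assume $W$ is connected (otherwise treat each connected component separately, since both $\#\partial W$ and $\mathrm{vol}(W)$ are additive over components and the infimum of a ratio is attained on a component), and then one "fills in the holes" of $W$ — replacing $W$ by the union of $W$ with all bounded complementary components — which can only decrease $\#\partial W$ while increasing $\mathrm{vol}(W)$ and keeps the filled set inside a slightly larger finite exceptional region, so the estimate is only improved. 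Making this filling argument precise (in particular checking that the filled set still avoids a finite set on whose complement the curvature is very negative, which is automatic since filling adds only vertices that were already "surrounded" by $W$) is where a little care is needed; once granted, the chain of inequalities above delivers $\al_\infty = 1$. I would also remark that the same argument, run with $K=\emptyset$ under the stronger hypothesis $\sup_{v}\ka(v) = -\infty$ (equivalently bounded-below vertex degree tending uniformly to infinity, impossible for a fixed graph but relevant for families), recovers $\al = 1$, which is the flavour of the cited result in \cite{K1}.
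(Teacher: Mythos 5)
Your overall strategy reaches the right conclusion, but the treatment of non--simply-connected sets contains a genuine gap. After filling in the holes of a connected $W\subseteq V\setminus K_m$, the resulting set $\hat W$ need \emph{not} avoid $K_m$: if $W$ is, say, an annulus of vertices surrounding $K_m$, then $\hat W\supseteq K_m$, and on $K_m$ the curvature can be as large as $3/2$. Your claim that avoidance of the exceptional set is ``automatic since filling adds only vertices that were already surrounded by $W$'' is therefore false, and the crucial inequality $\#\partial \hat W\ge -2\sum_{v\in\hat W}\ka(v)\ge 2m\,\#\hat W$ breaks down exactly in this case. One can try to repair it by splitting the curvature sum over $\hat W\cap K_m$ and $\hat W\setminus K_m$, but this produces an additive error of order $\#K_m$ which is not absorbed when $\#\partial W$ is small, so genuine extra work (not just ``a little care'') is needed. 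A second, minor point: you quote the wrong half of Proposition~\ref{p:ka_vs_deg}; the implication $\ka(v)\le -m\Rightarrow d_v\ge 2m$ comes from the lower bound $-d_v/2\le\ka(v)$, not from $\ka(v)\le 1-d_v/6$.

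The detour through Lemma~\ref{l:Derksen} is in fact unnecessary, and avoiding it removes the connectivity issue entirely; this is the route the paper intends when it says the theorem follows from \cite{K1} together with Proposition~\ref{p:ka_vs_deg}. From $\ka_\infty=-\infty$ and Proposition~\ref{p:ka_vs_deg} one gets, for every $n$, a finite $K_n$ with $d_v\ge n$ on $V\setminus K_n$. For an arbitrary finite $W\subseteq V\setminus K_n$ (no connectivity needed), planarity of the induced subgraph gives $\#E_W\le 3\#W$, while $\mathrm{vol}(W)=\sum_{v\in W}d_v\ge n\,\#W$. Hence
\begin{align*}
\frac{\#\partial W}{\mathrm{vol}(W)}=1-\frac{2\#E_W}{\mathrm{vol}(W)}\ge 1-\frac{6\#W}{n\,\#W}=1-\frac{6}{n},
\end{align*}
so $\al_\infty\ge 1-6/n$ for every $n$, and $\al_\infty\le 1$ is clear. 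Your argument, by contrast, extracts the boundary estimate from the curvature sum itself rather than from the degrees; that is a reasonable instinct (it is how the lower bounds on $\al$ for merely negative curvature are proved), but here the hypothesis is strong enough that the purely degree-based estimate is both simpler and free of the topological complications.
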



\section{Spectral theory}\label{s:spectrum}
In this section we study the spectral theory of the combinatorial Laplacian on tessellations. Many of the results are based on the geometric insights gathered in the previous section.

Since the definition of the combinatorial Laplacian does not depend on the tessellating structure, we introduce it on general graphs. It shall be stressed that although the Laplace operator considered here has the same quadratic form as the normalized Laplacian introduced in the previous chapter of Jost  it is defined on a different Hilbert space. This arises from a different notion of volume. Indeed, there are two canonical ways to measure volume on graphs, either by counting vertices or edges. The volume considered by Jost is the function $\mathrm{vol}$ used in the previous section and is associated to counting edges. Here, we use the counting measure which means volume is determined by counting vertices.

Both viewpoints have there merits. The normalized Laplacian considered by Jost based on the edge volume captures perfectly the metric features which come from the combinatorial graph distance, see e.g. \cite[Section~3.2.2.]{K4}. Moreover, this operator has the advantage that it is always bounded which avoids various technicalities. On the other hand, phenomena which are related to unbounded geometry are much better captured by the combinatorial Laplacian we study here.
In summary both Laplacians arise from natural geometric considerations and each is suitable for the study of certain phenomena. Specifically, we study here discreteness of spectrum in the case of uniformly decreasing curvature together with eigenvalue asymptotics and decay properties of eigenfunctions. We also study spectral bounds implied by geometric data and consider unique continuation properties of eigenfunctions.

\subsection{The combinatorial Laplacian}
In this section we introduce the combinatorial Laplacian on graphs. This operator has many applications in various fields of mathematics. We introduce the operator for  general graphs since the restriction to tessellations yields no additional basic information or properties.

Let $(V,E)$ be a graph. Then, the combinatorial Laplacian $\Delta$ acts on functions $\ph:V\to\R$ as
\begin{align*}
\Delta \ph(v)=\sum_{w\sim v}(\ph(w)-\ph(v)),\qquad v\in V.
\end{align*}
In order to study  spectral theory,  we  restrict $\Delta$ to a space of functions with more structure. We refer to \cite{Weidmann} for a background on operator theory. Let $\ell^{2}(V)$ be the space of square summable real valued functions, i.e.,
\begin{align*}
    \ell^{2}(V)=\{\ph:V\to\R\mid \sum_{v\in V} |\ph(v)|^{2}<\infty\}.
\end{align*}
The space $\ell^{2}(V)$ equipped with the scalar product
$$\langle\ph,\psi\rangle=\sum_{v\in V}\ph(v)\psi(v)$$
is a Hilbert space. We denote the corresponding norm by $\|\cdot\|$.

One can check easily that $\Delta$ is a bounded operator on $\ell^{2}(V)$ if and only if
\begin{align*}
    \sup_{v\in V}d_{v}<\infty.
\end{align*}
In the case, where $\Delta$ is unbounded we have to restrict $\Delta $ to a dense subspace of $\ell^{2}(V)$ to define a selfadjoint operator.
It was first  shown  in \cite[Theorem~1.3.1.]{Woj1} that $\Delta$ is a selfadjoint  on
\begin{align*}
D(\Delta)=\{\ph\in\ell^{2}(V)\mid& \Delta\ph\in\ell^{2}(V)\}.
\end{align*}
From now on we refer to $\Delta$ restricted to $D(\Delta)$ as the \emph{combinatorial Laplacian}.

It can be shown
that the functions of finite support $C_{c}(V)$ are dense in $D(\Delta)$ with respect to the graph norm. (Note that the  graph norm is a functional analytic quantity referring to the norm $\|\ph\|+\|\Delta\ph\|$ on $D(\Delta)$.) Since
\begin{align*}
\langle     \Delta  \ph,\ph\rangle=-\frac{1}{2}\sum_{v,w\in V,v\sim w}(\ph(v)-\ph(w))^{2},
\end{align*}
the operator $-\Delta $ is positive. In what follows we study the spectral theory of $-\Delta$. Clearly, $D(\Delta)=D(-\Delta)$.

We denote the spectrum of $-\Delta$ by
\begin{align*}
    \si(-\Delta)=\{z\in\C\mid -\Delta-z\mathrm{Id} \mbox{ has no bounded inverse}\},
\end{align*}
where $\mathrm{Id}$ is the identity operator on $\ell^{2}(V)$.  Since $-\Delta$ is selfadjoint and positive, we have by general theory
\begin{align*}
    \si(-\Delta)\subseteq [0,\infty).
\end{align*}
In the case  $\sup_{v}d_{v}\leq D$, we even have $$\si(-\Delta)\subseteq [0,2D].$$

For finite graphs the spectrum only consists of eigenvalues of $\Delta$. If the graph is infinite the space $\ell^{2}(V)$ is infinite dimensional and, therefore, there might be values $\lm\in \si(\Delta)$ which do not allow for an $\ell^{2}(V)$ eigenfunction. However, by a criterion of Weyl one still has approximate eigenfunctions.

We denote the \emph{bottom of the spectrum} of $-\Delta$ by $\lm_{0}=\lm_{0}(-\Delta)=\min\si(-\Delta)$. We have by the Rayleigh-Ritz characterization and the density of $C_{c}(V)$ in $\ell^{2}(V)$
\begin{align*}
    \lm_{0}(-\Delta)=\inf_{ \ph\in C_{c}(V),\,\|\ph\|=1}\langle     (-\Delta) \ph,\ph \rangle.
\end{align*}
In the case where $\lm_{0}>0$ one  says that $-\Delta$ has a \emph{spectral gap}.

For finite graphs the constant functions are in $\ell^{2}(V)$ and are, therefore,  eigenfunctions to the eigenvalue $0$. For infinite graphs the constant functions are never in $\ell^{2}(V)$. However, in some cases it is still possible that the constant functions can be approximated with respect to graph norm by functions in $\ell^{2}(V)$ in which case there is no spectral gap.

It is said that $-\Delta$ has \emph{pure discrete spectrum} if $\si(-\Delta)$ consists only of discrete eigenvalues of finite multiplicity. This implies that they only accumulate at infinity. Clearly, this can only happen if $\Delta $ is unbounded. In the case of pure discrete spectrum we denote the eigenvalues of $-\Delta$ by $\lm_{n}$, $n\ge0$, in increasing order counted with multiplicity.

The part of the spectrum which are no discrete eigenvalues of finite multiplicity is called the \emph{essential spectrum} and is denoted by $\si_{\mathrm{ess}}(-\Delta)$. Furthermore, we let
\begin{align*}
 \lm_{0}^{\mathrm{ess}}(-\Delta)=\min \si_{\mathrm{ess}}(-\Delta).
\end{align*}

\subsection{Bottom of the spectrum}
 Let us recall some well known results from discrete spectral geometry to estimate the bottom of the spectrum.
These results are classically proven for general graphs and the normalized Laplacian which we denote here by $\ow\Delta$ . We relate the bottom of spectra $\lm_{0}(-\ow \Delta)$ to $\lm_{0}(-\Delta)$ and $\lm_{0}^{\mathrm{ess}}(-\ow \Delta)$ to $\lm_{0}^{\mathrm{ess}}(-\Delta)$ as it was shown by very elementary arguments in \cite{K1}
\begin{align*}
    m\lm_{0}(-\ow\Delta)\leq \lm_{0}(-\Delta)\leq \lm_{0}^{\mathrm{ess}}(-\Delta)\leq M_{\infty}\lm_{0}^{\mathrm{ess}}(-\ow\Delta)
\end{align*}
with $m=\min_{v\in V}d_{v}$
and  $M_{\infty}=\inf_{K\subseteq V,\mbox{\scriptsize{ finite}}}\sup_{v\in V\setminus K}d_{v}$ whenever $M_{\infty}<\infty$.

Recall the definition of the volume growth rate
$$\mu=\limsup_{r \to \infty} \frac{1}{r}\log \mathrm{vol}(B_r)$$
from Section~\ref{s:volume} and
the isoperimetric constant
\begin{align*}
    \al=\inf_{W\subset V \mbox{{\scriptsize finite}}}\frac{\#\partial W}{\mathrm{vol}(W)}.
\end{align*}
from Section~\ref{s:isoperimetric}. Then, by results of Fujiwara \cite{Fu1,Fu2}
\begin{align*}
    m\big(1-\sqrt{1-\al^{2}}\big)\leq\lm_{0}(-\Delta)\leq \lm_{0}^{\mathrm{ess}}(-\Delta)\leq M_{\infty}\Big(1-\frac{2e^{\mu/2}}{e^{\mu}+1}\Big).
\end{align*}
The lower bound was preceded by a result of Dodziuk/Kendall \cite{DKe} and is found in a similar form in the work of Mohar \cite{Mo88}. The upper bound is preceded by results of Dodziuk/Karp \cite{DKa} and Ohno/ Urakawa \cite{OU}.

We first use the upper bound in the case of non-negative curvature. We conclude by the estimate above and Theorem~\ref{t:grow_nonneg_curvature}.

\begin{cor} Let $G$ be a tessellation embedded  in a surface of finite genus such that $\ka\ge0$. Then, $    \lm_{0}(-\Delta)=\lm_{0}^{\mathrm{ess}}(-\Delta)=0$.
\end{cor}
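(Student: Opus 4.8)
The plan is to invoke the upper bound for $\lm_0^{\mathrm{ess}}(-\Delta)$ from Fujiwara's inequality stated above, which reads
\begin{align*}
    \lm_{0}^{\mathrm{ess}}(-\Delta)\leq M_{\infty}\Big(1-\frac{2e^{\mu/2}}{e^{\mu}+1}\Big),
\end{align*}
and to show that the right-hand side vanishes. First I would observe that since $\ka\ge0$, Proposition~\ref{p:ka_vs_deg} gives $d_v\le 6$ for all $v\in V$, so $M_{\infty}\le 6<\infty$ and the upper bound is applicable (and $-\Delta$ is in fact bounded). Next I would argue that the volume growth rate $\mu$ is zero: by Theorem~\ref{t:grow_nonneg_curvature} we have $\#B_r\le Cr^2$, and since $\mathrm{vol}(B_r)\le 6\#B_r$ (using $d_v\le 6$ again), we get $\mathrm{vol}(B_r)\le 6Cr^2$, whence
\begin{align*}
    \mu=\limsup_{r\to\infty}\frac{1}{r}\log\mathrm{vol}(B_r)\le \limsup_{r\to\infty}\frac{\log(6Cr^2)}{r}=0.
\end{align*}
Then $\mu=0$ forces $1-\frac{2e^{\mu/2}}{e^{\mu}+1}=1-\frac{2}{2}=0$, so the upper bound yields $\lm_{0}^{\mathrm{ess}}(-\Delta)\le 0$, and since $-\Delta$ is positive this gives $\lm_{0}^{\mathrm{ess}}(-\Delta)=0$. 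Finally, from $0\le \lm_{0}(-\Delta)\le \lm_{0}^{\mathrm{ess}}(-\Delta)=0$ we conclude $\lm_{0}(-\Delta)=\lm_{0}^{\mathrm{ess}}(-\Delta)=0$.

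There is essentially no main obstacle here — the statement is a direct corollary, and the only thing to be careful about is that all the hypotheses needed to apply Fujiwara's upper bound (finiteness of $M_\infty$, which is why the curvature bound from Proposition~\ref{p:ka_vs_deg} is invoked) are genuinely in force, and that the subexponential (indeed polynomial) volume bound from Theorem~\ref{t:grow_nonneg_curvature} correctly feeds into $\mu=0$. One could alternatively phrase the argument via the isoperimetric side: since $\al=0$ by Theorem~\ref{t:iso_nonneg_curvature}, one gets a vanishing \emph{lower} bound for free, but to force the spectrum itself to collapse to zero one still needs the upper bound, so the volume-growth route through $\mu=0$ is the essential input.
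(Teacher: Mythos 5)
Your proof is correct and follows exactly the route the paper intends: the paper's one-line justification ("We conclude by the estimate above and Theorem~\ref{t:grow_nonneg_curvature}") is precisely the combination of Fujiwara's upper bound with the quadratic volume growth forcing $\mu=0$, and your use of Proposition~\ref{p:ka_vs_deg} to get $d_v\le 6$ (hence $M_\infty<\infty$) fills in the only detail the paper leaves implicit. Nothing further is needed.
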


Furthermore, we use Theorem~\ref{t:iso_neg_av_curvature} to derive the following.
\begin{thm} Let $G$ be a planar  tessellation such that $\ov\ka<0$. Then, $\lm_{0}>0$.
\end{thm}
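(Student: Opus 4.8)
The plan is to chain together two facts already established in the excerpt: the geometric input that negative asymptotic curvature forces a positive isoperimetric constant, and the spectral input that a positive isoperimetric constant forces a spectral gap. Concretely, by Theorem~\ref{t:iso_neg_av_curvature} (Woess), the hypothesis $\ov\ka<0$ on a planar tessellation yields $\al>0$. Then I would invoke the lower bound of Fujiwara recalled just before the statement, namely
\begin{align*}
    m\big(1-\sqrt{1-\al^{2}}\big)\leq\lm_{0}(-\Delta),
\end{align*}
where $m=\min_{v\in V}d_{v}\ge 3$ since every vertex of a tessellation has degree at least $3$. Since $0<\al<1$, the quantity $1-\sqrt{1-\al^{2}}$ is strictly positive, hence $\lm_{0}(-\Delta)\ge 3\big(1-\sqrt{1-\al^{2}}\big)>0$, which is exactly a spectral gap.

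The only subtlety is that Fujiwara's lower bound as stated is a Cheeger-type inequality whose natural home is the normalized Laplacian, and one should make sure the transfer to the combinatorial Laplacian $-\Delta$ (acting on $\ell^2(V)$ with the counting measure) is legitimate. This is handled by the comparison $m\,\lm_{0}(-\ow\Delta)\le \lm_{0}(-\Delta)$ recorded in the excerpt together with the classical Cheeger inequality $\lm_{0}(-\ow\Delta)\ge 1-\sqrt{1-\al^2}$ for the normalized Laplacian; combining the two gives precisely the displayed lower bound. So one does not even need the vertex degree to be bounded, which is good since $\ov\ka<0$ does not a priori bound $d_v$ from above (indeed it may well force some degrees to be large). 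One does need to check that the isoperimetric constant appearing in Fujiwara's inequality is the edge-volume normalization $\#\partial W/\mathrm{vol}(W)$ used throughout this section, which it is.

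I expect essentially no obstacle here: the statement is a genuine corollary, and the proof is a two-line combination of Theorem~\ref{t:iso_neg_av_curvature} and the Dodziuk--Kendall/Fujiwara lower bound quoted above. If anything, the only thing worth a sentence is to emphasize that $\al>0$ gives a \emph{quantitative} gap $\lm_0(-\Delta)\ge m(1-\sqrt{1-\al^2})$, not merely a positive one, and that strictness of the inequality $\al<1$ (from $\mathrm{vol}(W)=2\#E_W+\#\partial W$ with $\#E_W\ge 1$ for any $W$ containing an edge, and for singletons one checks directly) is what makes the bound nontrivial. I would therefore write the proof as: ``By Theorem~\ref{t:iso_neg_av_curvature}, $\ov\ka<0$ implies $\al>0$. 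Since $0<\al<1$ and $m=\min_v d_v\ge 3$, the lower bound $\lm_0(-\Delta)\ge m(1-\sqrt{1-\al^2})$ from the estimates of Fujiwara (recalled above) gives $\lm_0(-\Delta)>0$.''
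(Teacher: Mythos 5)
Your proof is correct and is exactly the argument the paper intends: the theorem is stated immediately after the Fujiwara/Dodziuk--Kendall bound $m(1-\sqrt{1-\al^{2}})\le\lm_{0}(-\Delta)$ precisely so that it follows by combining that bound with Woess's result that $\ov\ka<0$ implies $\al>0$. Your additional remarks on transferring the Cheeger bound from the normalized to the combinatorial Laplacian via $m\,\lm_{0}(-\ow\Delta)\le\lm_{0}(-\Delta)$, and on not needing bounded vertex degree, are accurate and only make explicit what the paper leaves implicit.
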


Finally, we combine both estimates above with Theorem~\ref{t:growest} and Theorem~\ref{t:iso_nonneg_curvature}.

\begin{thm}\label{t:spectral_estimates}
  Let $G$ be a planar tessellation such that
  $\deg(f) \le p$ for all $f \in F$ and $d_{v} \le q$ for all $ v \in V$  with $p,q\in[3,\infty]$.
  Assume $\ka<0$ and let $K= \inf_{v \in V} -\frac{1}{d_{v}}\ka(v)$. Then,
   \begin{align*}
 2mK^{2}\le m(1-\sqrt{1-4C_{p,q}^{2}K^{2}}) \le \lm_{0}(-\Delta),
\end{align*}
where $C_{p,q}$ is defined in Section~\ref{s:isoperimetric}. If additionally $p,q<\infty$, then
\begin{align*}
    \lm_{0}^{\mathrm{ess}}(-\Delta)\leq M_{\infty}\Big(1-\frac{2 x_{p,q}^{1/2}}{x_{p,q}+1}\Big),
\end{align*}
where $x_{p,q}$ is the largest real zero of $g_{p,q}$ defined in Section~\ref{s:volume}.
\end{thm}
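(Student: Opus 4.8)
The plan is to assemble the statement from three ingredients established above: the Cheeger-type lower bound $\al\ge 2C_{p,q}K$ of Theorem~\ref{t:curv:cheegest}, the volume growth comparison of Theorem~\ref{t:growest} together with the growth-rate computation for $(p,q)$-regular tessellations from Section~\ref{s:volume}, and the Fujiwara-type two-sided bound
\begin{align*}
m\big(1-\sqrt{1-\al^{2}}\big)\le\lm_{0}(-\Delta)\le\lm_{0}^{\mathrm{ess}}(-\Delta)\le M_{\infty}\Big(1-\frac{2e^{\mu/2}}{e^{\mu}+1}\Big)
\end{align*}
recalled at the start of this subsection. For the lower bound I would use that $t\mapsto m(1-\sqrt{1-t^{2}})$ is nondecreasing on $[0,1]$: since $\ka<0$ and $p,q\in[3,\infty]$, Theorem~\ref{t:curv:cheegest} gives $2C_{p,q}K\le\al$, and as $\al<1$ this forces $4C_{p,q}^{2}K^{2}<1$, so substituting $2C_{p,q}K\le\al$ into the left inequality above yields $m\big(1-\sqrt{1-4C_{p,q}^{2}K^{2}}\big)\le\lm_{0}(-\Delta)$.

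It then remains to check the elementary estimate $2mK^{2}\le m\big(1-\sqrt{1-4C_{p,q}^{2}K^{2}}\big)$, i.e., $\sqrt{1-4C_{p,q}^{2}K^{2}}\le 1-2K^{2}$. By Proposition~\ref{p:ka_vs_deg} one has $-\ka(v)/d_{v}\le\tfrac{1}{2}$ for every $v$, hence $K\le\tfrac{1}{2}$ and the right-hand side is nonnegative; squaring and simplifying reduces the claim to $K^{2}(1-K^{2})\le C_{p,q}^{2}K^{2}$, which holds because $1-K^{2}\le 1\le C_{p,q}^{2}$.

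For the bound on $\lm_{0}^{\mathrm{ess}}(-\Delta)$ assume in addition $p,q<\infty$, so that $M_{\infty}\le q<\infty$ and the right inequality above applies. Writing $1-\tfrac{2e^{s/2}}{e^{s}+1}=1-\cosh(s/2)^{-1}$, which is nondecreasing in $s\ge0$, it suffices to prove $\mu\le\log x_{p,q}$; inserting $s=\log x_{p,q}$ then gives exactly $M_{\infty}\big(1-2x_{p,q}^{1/2}/(x_{p,q}+1)\big)$. By the Lemma of Section~\ref{s:volume}, $\mu=\limsup_{r\to\infty}\tfrac{1}{r}\log\#S_{r}$, so one has to bound $\#S_{r}$ by the sphere sizes $\#\widetilde S_{r}$ of the $(p,q)$-regular tessellation $\widetilde G$, for which $\limsup_{r\to\infty}\tfrac{1}{r}\log\#\widetilde S_{r}=\log x_{p,q}$ by Section~\ref{s:volume}. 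When $G$ is $p$-face regular this comparison is immediate: then $\ka(v)=1+d_{v}\big(\tfrac{1}{p}-\tfrac{1}{2}\big)\ge\ka_{\widetilde G}$ while both remain $\le0$ (indeed some $d_{v}>\tfrac{2p}{p-2}$, so $q>\tfrac{2p}{p-2}$ and $\widetilde G$ is hyperbolic), hence the normalized average spherical curvatures satisfy $\overline\Phi(\widetilde S_{r})\le\overline\Phi(S_{r})\le0$ and Theorem~\ref{t:growest} gives $\#\widetilde S_{r}-\#S_{r}\ge0$ for all $r$, whence $\mu\le\log x_{p,q}$.

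The step requiring the most care is precisely this comparison in the general case: Theorem~\ref{t:growest} is stated for $p$-face-regular tessellations, whereas here only $\deg(f)\le p$ is assumed, and even the definition of the normalized spherical curvature presumes a fixed face degree. One therefore needs a reduction to the face-regular case — for instance, showing via the face-counting scheme indicated in the proof idea of Theorem~\ref{t:growest} that the growth rate of a tessellation with $\deg(f)\le p$ and $d_{v}\le q$ cannot exceed that of the $(p,q)$-regular tessellation. I expect this to be the main obstacle; note that the cruder tree-comparison bound $\mu\le\log(q-1)$ does not suffice, since $x_{p,q}<q-1$. Once $\mu\le\log x_{p,q}$ is secured, the rest is a routine substitution into the monotone functions above.
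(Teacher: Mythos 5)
Your proposal takes exactly the route the paper intends: the paper's entire ``proof'' is the sentence preceding the statement, which says to combine the two displayed Fujiwara-type bounds with the curvature--isoperimetry and curvature--growth results (the reference there to Theorem~\ref{t:iso_nonneg_curvature} is evidently a slip for Theorem~\ref{t:curv:cheegest}). Your elementary verifications are all correct: monotonicity of $t\mapsto 1-\sqrt{1-t^{2}}$ together with $2C_{p,q}K\le\al<1$ gives the middle inequality; $K\le 1/2$ from Proposition~\ref{p:ka_vs_deg} and $C_{p,q}\ge1$ reduce $2mK^{2}\le m(1-\sqrt{1-4C_{p,q}^{2}K^{2}})$ to $1-K^{2}\le C_{p,q}^{2}$; and the essential-spectrum bound does follow from $\mu\le\log x_{p,q}$ by monotonicity of $s\mapsto 1-2e^{s/2}/(e^{s}+1)$ together with $M_{\infty}\le q<\infty$.

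The obstacle you flag --- that Theorem~\ref{t:growest} only covers $p$-face-regular tessellations, whereas here merely $\deg(f)\le p$ is assumed --- is real, but it is a gap in the survey's one-line justification rather than in your strategy: nothing stated in Section~\ref{s:volume} yields $\mu\le\log x_{p,q}$ under the degree bounds $\deg(f)\le p$, $d_{v}\le q$ alone, and, as you correctly note, the tree comparison $\#B_{r}\le (q-1)^{r}$ is too weak since $x_{p,q}<q-1$. The missing comparison is supplied in the cited source \cite{KP}, where the growth of locally tessellating graphs with face degrees bounded by $p$ and vertex degrees bounded by $q$ is dominated by that of the $(p,q)$-regular tessellation; within the survey itself your argument closes only in the face-regular case, where your verification that $\overline{\Phi}(\widetilde S_{r})\le\overline{\Phi}(S_{r})\le0$ (via $\ka(v)=1+d_{v}(\tfrac1p-\tfrac12)\ge\ka_{\widetilde G}$) and hence $\#S_{r}\le\#\widetilde S_{r}$ is correct. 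In short: same approach as the paper, all supplied details check out, and the one step you could not close is precisely the step the paper also delegates to the literature rather than proving.
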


The lower bound above can be considered as a discrete analogue to a
theorem of  McKean \cite{McK}. McKean proved for a $n$-dimensional
complete Riemannian manifold $M$ with upper sectional curvature
bound $-k$ that the bottom of the spectrum of the  Laplace-Beltrami
$-\Delta_{M}$ satisfies
\begin{align*}
(n-1)^{2}k/4  \le   \lm_{0}(-\Delta_{M}).
\end{align*}
It shall be noted that by Theorem~\ref{t:Higuchi} the assumption $\ka<0$  implies $K>0$ and, therefore, $\lm_{0}(-\Delta)>0$ in this case.

\subsection{Discrete spectrum,  eigenvalue asymptotics and decay of eigenfunctions}
We now turn to a characterization of pure discrete spectrum. The theorem below is  an analogue of a theorem of Donnelly/Li \cite{DL} from Riemannian geometry. Recall the definition of the upper asymptotic curvature bound
\begin{align*}
        \ka_{\infty}=\sup_{K\subset V\,\mathrm{finite}}\inf_{v\in V\setminus K}\ka(v)
\end{align*}

\begin{thm}\label{t:discretespectrum}\cite[Theorem~3]{K1} Let $G$ be a planar tessellation. Then the spectrum of $-\Delta$ is purely discrete if and only if
$\ka_{\infty}=-\infty$.
\end{thm}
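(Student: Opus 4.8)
The plan is to characterize pure discreteness of $\sigma(-\Delta)$ via the standard compactness criterion for the resolvent: by the Rayleigh--Ritz/min-max principle, $-\Delta$ has purely discrete spectrum if and only if the bottom of the essential spectrum, computed \emph{exhaustively}, is $+\infty$, i.e.
\begin{align*}
\lambda_0^{\mathrm{ess}}(-\Delta) = \sup_{K\subset V\,\text{finite}}\ \inf_{\substack{\varphi\in C_c(V\setminus K)\\ \|\varphi\|=1}} \langle(-\Delta)\varphi,\varphi\rangle = \infty.
\end{align*}
So the whole theorem reduces to showing that this Persson-type quantity equals $+\infty$ precisely when $\ka_\infty=-\infty$. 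The link to curvature will go through Proposition~\ref{p:ka_vs_deg}, which gives the two-sided comparison between $\ka(v)$ and $d_v$: the condition $\ka_\infty=-\infty$ is \emph{equivalent} to saying that vertex degrees tend to infinity in the sense $\inf_{v\in V\setminus K}d_v\to\infty$ as $K$ exhausts $V$, i.e. only finitely many vertices have degree below any given bound.

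The ``if'' direction (degrees $\to\infty$ $\Rightarrow$ discrete spectrum) I would handle by a direct quadratic-form estimate. For $\varphi\in C_c(V\setminus K)$ one has $\langle(-\Delta)\varphi,\varphi\rangle=\tfrac12\sum_{v\sim w}(\varphi(v)-\varphi(w))^2 \ge \sum_v d_v|\varphi(v)|^2 - |\langle A\varphi,\varphi\rangle|$ where $A$ is the adjacency-type operator, and a Cauchy--Schwarz bound $|\langle A\varphi,\varphi\rangle|\le \sum_v d_v^{1/2}(\cdots)$; more cleanly, expanding the Dirichlet form gives $\langle(-\Delta)\varphi,\varphi\rangle = \sum_v d_v|\varphi(v)|^2 - \sum_{v\sim w}\varphi(v)\varphi(w)$, and the off-diagonal term is bounded by $\tfrac12\sum_{v\sim w}(|\varphi(v)|^2+|\varphi(w)|^2)=\sum_v d_v|\varphi(v)|^2$, which only yields positivity, not a gap. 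The right tool here is that on a planar tessellation the degree bound interacts with the \emph{face} structure: since every face has degree $\ge 3$, a vertex of large degree is surrounded by many faces, and one uses the isoperimetric estimate at infinity $\alpha_\infty=1$ from Theorem~\ref{t:iso_neg_av_curvature} (the $K_\infty=-\infty$ case), together with Fujiwara's lower bound $\lambda_0\ge m(1-\sqrt{1-\alpha^2})$ applied to the graph with a finite set removed, to push $\lambda_0^{\mathrm{ess}}$ to $\infty$. Concretely: $\ka_\infty=-\infty$ gives $\ka_\infty(G\setminus K)=-\infty$ hence $\alpha_\infty=1$ on each tail, and combining with the fact that $m$ (the minimal degree on the tail) also tends to infinity, the Fujiwara bound forces $\lambda_0^{\mathrm{ess}}=\infty$.

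The ``only if'' direction (discrete spectrum $\Rightarrow$ $\ka_\infty=-\infty$) is the contrapositive: if $\ka_\infty>-\infty$, then by Proposition~\ref{p:ka_vs_deg} there is a bound $d_v\le D$ for infinitely many $v$, and one must construct a Weyl sequence, i.e. an orthonormal sequence $(\varphi_n)$ with $\|(-\Delta-\lambda)\varphi_n\|\to 0$ for some finite $\lambda$, or at least with $\langle(-\Delta)\varphi_n,\varphi_n\rangle$ bounded, witnessing non-discreteness. The natural candidates are normalized indicator-type functions supported near an escaping sequence of bounded-degree vertices; one needs that such vertices are not too isolated, which again uses the tessellation structure to find, near infinity, configurations (e.g. a vertex together with a neighbor, both of bounded degree) on which the Dirichlet energy of a bump function stays bounded. \textbf{The main obstacle} I anticipate is precisely this ``only if'' construction: a single bounded-degree vertex far out does not by itself produce a low-energy test function, since its neighbors could have huge degree making the Dirichlet form large; one has to exploit planarity/the tessellation axioms (T1)--(T3) to show that $\ka_\infty>-\infty$ forces an \emph{infinite} region, not just isolated vertices, of uniformly bounded geometry --- e.g. an infinite path or an infinite subgraph of bounded degree escaping to infinity --- and then restrict $-\Delta$ to that region to import a known non-discreteness statement. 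Organizing this combinatorial dichotomy (bounded-geometry tail vs. degrees blowing up everywhere) via the face-degree constraint is where the real work lies, and it is the part where the planarity hypothesis is genuinely used, as opposed to the general-graph soft arguments that cover the rest.
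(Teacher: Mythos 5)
Your ``if'' direction is essentially the paper's argument: from $\ka_\infty=-\infty$ one obtains a uniformly positive isoperimetric constant outside large finite sets (Theorem~\ref{t:curv:cheegest}, or equivalently $\al_\infty=1$), the minimal degree on the tail tends to infinity by Proposition~\ref{p:ka_vs_deg}, and Fujiwara's bound $\lm_0\ge m\big(1-\sqrt{1-\al^2}\big)$ applied to the restrictions outside an exhausting sequence of finite sets pushes the bottom of the spectrum of these restrictions to $+\infty$; the paper then concludes exactly via the finite-rank-perturbation/Persson characterization that you set up at the start. So that half is fine.

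The genuine gap is in the ``only if'' direction, and it stems from a concrete miscalculation rather than a missing combinatorial fact. You worry that a delta function at a far-out bounded-degree vertex might have large Dirichlet energy because its neighbours could have huge degree, and you therefore propose to manufacture an infinite bounded-geometry region escaping to infinity. But the energy of $\delta_v$ is
\begin{align*}
\langle(-\Delta)\delta_v,\delta_v\rangle=\frac{1}{2}\sum_{u\sim w}\big(\delta_v(u)-\delta_v(w)\big)^2=d_v,
\end{align*}
which involves only the degree of $v$ itself and is completely insensitive to the degrees of its neighbours. Hence if $\ka_\infty>-\infty$, Proposition~\ref{p:ka_vs_deg} yields a sequence of vertices $v_n$ escaping to infinity with $d_{v_n}\le D$, and the orthonormal, weakly null sequence $(\delta_{v_n})$ already shows $\lm_0^{\mathrm{ess}}\le D<\infty$ by the very Persson criterion you quoted; no planarity, no tessellation axioms, and no bounded-geometry subgraph are needed for this implication. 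This is precisely the paper's one-line argument for this direction. The detour you flag as ``where the real work lies'' (extracting an infinite bounded-degree subgraph near infinity and importing a non-discreteness statement for it) is both unnecessary and harder than the problem itself, since, as the computation above shows, whether the low-degree vertices are isolated is irrelevant.
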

\begin{proof}[Idea of proof] If the spectrum of $-\Delta$ is purely discrete, then, by a criterion attributed to Perrson  in the continuous case, $\langle (-\Delta) \ph_{n},\ph_{n}\rangle\to\infty$ for every normalized sequence $(\ph_{n})$ in $\ell^{2}(V)$ which converges weakly to zero. For a sequence $(v_{n})$ of vertices and the delta functions $\delta_{v_{n}}$, we have
\begin{align*}
    \langle (-\Delta)\delta_{v_{n}},\delta_{v_{n}}\rangle=d_{v_{n}}\leq-2\ka(v_{n}),
\end{align*}
where the last inequality follows from  Proposition~\ref{p:ka_vs_deg}. This implies $\ka_{\infty}=-\infty$.\\
On the other hand, assume $\ka_{\infty}=-\infty$. By Theorem~\ref{t:curv:cheegest} we infer that outside of large enough finite sets the isoperimetric constant is uniformly positive. Moreover, by an argument as in Theorem~\ref{t:spectral_estimates} the bottom of the spectrum of the operator restricted to functions supported outside of larger and larger finite sets converges to $\infty$. This, however, is equivalent to pure discrete spectrum, as the restricted operators are finite rank perturbations of the original operator.
\end{proof}

In \cite{Fu2} Fujiwara proved a similar statement as the theorem above for the
normalized Laplacian on trees, namely that spectrum is discrete except for the point $1$, where the discrete eigenvalues accumulate. Furthermore,  Wojciechowski \cite{Woj1} showed  discreteness of the
spectrum of  $-\Delta$ on general graphs in terms of a different curvature quantity sometimes  referred to as a mean curvature.

We observe the following standard fact. Whenever, the spectrum of $-\Delta$ is purely discrete, then $\lm_{0}(-\Delta)>0$: If $0$ was in the spectrum, then it must be a discrete eigenvalue. However, the only eigenfunctions $\ph$ to $0$ which have finite energy, (i.e., $\sum_{v\sim w}(\ph(v)-\ph(w))^{2}<\infty$) are the constant functions which are not in $\ell^2(V)$ in the case of infinite graphs).

In the case of discrete spectrum we next present asymptotics for eigenvalue $\lm_{n}$ of $-\Delta$, i.e.,
\begin{align*}
    \si(-\Delta)=\{0<\lm_{0}\le\lm_{1}\leq\ldots\leq\lm_{n}\leq\ldots\}
\end{align*}

For the case $\ka_{\infty}=-\infty$, it follows from Proposition~\ref{p:ka_vs_deg} that the vertices can be ordered  $V=\{v_{n}\}$ such that
\begin{align*}
    d_{v_{n}}\leq d_{v_{n+1}},\qquad n\ge0.
\end{align*}

\begin{thm}\label{t:eigenvalues}\cite[Theorem 1.6.]{BGK2} Let $G$ be a planar tessellation such that $\ka_{\infty}=-\infty$. Then,
\begin{align*}
     d_{v_n}-2\sqrt{d_{v_n}} \lesssim \lm_{n}\lesssim d_{v_n}+2\sqrt{d_{v_n}},
\end{align*}
that is
\begin{align*}
    \lim_{n\to\infty} \frac{\lm_{n}}{d_{v_n}}=1
\end{align*}
and
\begin{align*}
-2\leq \liminf_{n\to\infty}  \frac{\lm_{n}-d_{v_n}}{\sqrt{d_{v_n}}}\leq \limsup_{n\to\infty}  \frac{\lm_{n}-d_{v_n}}{\sqrt{d_{v_n}}}\leq 2.
\end{align*}
\end{thm}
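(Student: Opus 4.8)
The plan is to write $-\Delta=D-A$, where $D$ is the self-adjoint operator of multiplication by the degree function $v\mapsto d_{v}$ and $A$ is the adjacency operator $A\ph(v)=\sum_{w\sim v}\ph(w)$. Since $\ka_{\infty}=-\infty$ forces $d_{v_{n}}\to\infty$ by Proposition~\ref{p:ka_vs_deg}, the operator $D$ has purely discrete spectrum with eigenvalues exactly the ordered sequence $d_{v_{0}}\le d_{v_{1}}\le\dots$. The whole statement is then reduced to the estimate that, as quadratic forms on $C_{c}(V)$, the adjacency operator is controlled by $2\sqrt D$ up to a bounded error:
\begin{align*}
    \big|\langle A\ph,\ph\rangle\big|\le 2\sum_{v\in V}\sqrt{d_{v}}\,|\ph(v)|^{2}+C\|\ph\|^{2},\qquad \ph\in C_{c}(V),
\end{align*}
where $C$ may be allowed to depend on the genus (a version of this for $\ph$ supported off a fixed finite set, with any constant $2+\varepsilon$, would equally do, since such a restriction alters $-\Delta$ only by a finite rank operator). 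Granting this, one has $(\sqrt D-1)^{2}-1-C\le -\Delta\le(\sqrt D+1)^{2}-1+C$ in the form sense, and the bounding operators $D\mp 2\sqrt D\mp C$ have purely discrete spectrum with $n$-th eigenvalue $d_{v_{n}}\mp 2\sqrt{d_{v_{n}}}\mp C$ (the map $t\mapsto t\mp 2\sqrt t$ is increasing for $t\ge1$, so the ordering of eigenvalues agrees with that of $D$). By the min--max principle this yields
\begin{align*}
    d_{v_{n}}-2\sqrt{d_{v_{n}}}-C\le \lm_{n}\le d_{v_{n}}+2\sqrt{d_{v_{n}}}+C,\qquad n\ge0,
\end{align*}
and dividing by $d_{v_{n}}$, respectively $\sqrt{d_{v_{n}}}$, and letting $n\to\infty$ gives all three assertions.

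The form bound is the geometric heart of the proof, and obtaining it with the \emph{sharp} constant $2$ is the step I expect to be the main obstacle. Its mechanism is that, since $\ka_{\infty}=-\infty$, outside a fixed finite set the curvature is very negative, whence by Theorem~\ref{t:curv:cheegest} the isoperimetric constant of the complements of large finite sets is forced close to its maximal value $1$; quantitatively one expects the deficit $1-\al$ there to be of order $1/d_{v}$, and combining this with Cheeger-type estimates $\|A\|\lesssim d\sqrt{1-\al^{2}}$ on a region of essential degree $d$ already produces $\langle A\ph,\ph\rangle$ of size $O(\sqrt{d_{v}})$. Replacing the resulting constant by the sharp $2$ --- which is the adjacency spectral radius $2\sqrt{d_{v}}$ of the $d_{v}$-regular tree, the extremal local model of a tessellation near a vertex of huge degree --- is what pins down the coefficients $\pm 2$ in the limits. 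Concretely one would estimate $\langle A\ph,\ph\rangle=2\sum_{\{v,w\}\in E}\ph(v)\ph(w)$ by a weighted edge-by-edge Cauchy--Schwarz inequality $2|\ph(v)\ph(w)|\le s_{vw}\ph(v)^{2}+s_{vw}^{-1}\ph(w)^{2}$, the weights read off from the local combinatorics of the tiling, arranged so that the coefficient of $\ph(v)^{2}$ collected at each vertex adds up to at most $2\sqrt{d_{v}}$ up to bounded error; that the bound is $2\sqrt{d_{v}}$ and not a larger multiple of $\sqrt{d_{v}}$ forces one to exploit the structure at distance two (at distance one alone one only sees a ``wheel''-type contribution of size $\sim\sqrt{d_{v}}$), and here the tiling axioms, together with the sparsity $\#E_{W}\le C\#W$ of graphs of finite genus from Section~\ref{s:volume}, must be used.

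Alternatively, the upper bound $\lm_{n}\lesssim d_{v_{n}}+2\sqrt{d_{v_{n}}}$ can be obtained directly from the variational characterization of eigenvalues with explicit test functions: tree-type approximate eigenfunctions of $-\Delta$ localized near the successive vertices $v_{0},\dots,v_{n}$, each with Rayleigh quotient $d_{v_{i}}+2\sqrt{d_{v_{i}}}+o(\sqrt{d_{v_{i}}})\le d_{v_{n}}+2\sqrt{d_{v_{n}}}+o(\sqrt{d_{v_{n}}})$. The extra difficulty on this route is controlling the non-orthogonality of the localized functions so that their span has the full dimension $n+1$ and the Rayleigh quotient on it is not spoiled; this can be arranged by passing to a sufficiently separated subsequence of vertices or by a bracketing argument, and it is the source of the $o(\sqrt{d_{v_{n}}})$ corrections.
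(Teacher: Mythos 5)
Your functional-analytic skeleton is correct and is, in spirit, the same as the paper's: both arguments ultimately sandwich $-\Delta$ between the multiplication operators $D\mp2\sqrt{D}\mp C$ and invoke the min--max principle, and your supporting observations (that $\ka_{\infty}=-\infty$ forces $d_{v_{n}}\to\infty$ via Proposition~\ref{p:ka_vs_deg}, that a form bound off a finite set suffices because the modification is finite rank, that $t\mapsto t\mp2\sqrt{t}$ is eventually increasing so the comparison operators have the claimed ordered eigenvalues) are all fine. The genuine gap is that the entire geometric content of the theorem sits in the form bound $\pm\langle A\ph,\ph\rangle\le2\langle\sqrt{D}\ph,\ph\rangle+C\|\ph\|^{2}$, which you state but do not prove, and neither of the mechanisms you sketch reaches the constant $2$. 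The Cheeger route would need the isoperimetric \emph{deficit} $1-\al$ of complements of finite sets to decay like $2/d$; Theorem~\ref{t:curv:cheegest} only bounds $\al$ away from $0$, and in any case an isoperimetric bound controls the single number $\lm_{0}$ of a region, not the pointwise comparison with $D-2\sqrt{D}$ that the $n$-th eigenvalue requires. The edge-by-edge Cauchy--Schwarz route with unspecified weights, fed only by the sparsity $\#E_{W}\le C\#W$ of planar graphs, produces a second-order term with a constant like $\sqrt{6}$ rather than $2$ (one global weight $t=1/\sqrt{d}$ charges each vertex $\sqrt{d_{v}}$ once per incident edge on the ``wrong'' side, and planarity only caps the average number of such edges at about $6$). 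So as written the proposal establishes $\lm_{n}/d_{v_{n}}\to1$ but not the sharp $\pm2\sqrt{d_{v_{n}}}$ term, which is the point of the theorem. Your alternative route for the upper bound has the same hole: for any family of localized test functions the cross terms reintroduce the adjacency form on a finite planar piece, whose norm must again be shown to be $2\sqrt{d_{v_{n}}}+o(\sqrt{d_{v_{n}}})$.

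The paper fills exactly this gap by a different intermediate reduction: it first shows that a tessellation with curvature decreasing to $-\infty$ admits a spanning tree obtained by deleting a uniformly bounded number $N$ of edges at each vertex, so that $-\Delta_{G}$ and $-\Delta_{T}$ are bounded perturbations of one another and share eigenvalue asymptotics by min--max; on the tree the per-vertex sparsity constant drops to $1$, the extremal local model $\|A\|=2\sqrt{d-1}$ of the $d$-regular tree (which you correctly identify as the source of the constant $2$) becomes genuinely available, and the isoperimetric/weighted Cauchy--Schwarz estimate can be carried out with the sharp constant. If you want to keep your direct formulation, this spanning-tree lemma is the missing ingredient you would have to prove; it is where the tessellation axioms and the hypothesis $\ka_{\infty}=-\infty$, rather than mere planarity, enter.
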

\begin{proof}[Idea of proof]
The proof  in \cite{BGK2} is divided into two steps. First one shows that every tessellation whose curvature decreases to $-\infty$ allows for a spanning tree such that the combinatorial Laplacian on the tessellation and the tree are bounded perturbations of each other. Equivalently this means that there is a number $N$ such that from each vertex at most $N$ adjacent edges are canceled to obtain the spanning tree. By the Min-Max-Principle both operators share the same eigenvalue asymptotics. \\
Secondly, one shows the corresponding eigenvalue  for the operator on the tree. The proof uses isoperimetric techniques and again a version of the  Min-Max-Principle.
\end{proof}

In the case of planar tessellations with constant face degree one can  show bounds with an even more geometric flavor.
Assume all faces have degree $p$. Then, the internal angle of every face $f$ is given by
\begin{align*}
    \beta(f)=\beta(p)=2\pi\frac{(p-2)}{p}.
\end{align*}

\begin{cor}\label{c:eigenvalues}\cite[Corollary 1.8.]{BGK2} Let $G$ be a planar tessellation such that $\ka_{\infty}=-\infty$. Suppose the face degree is constantly $p\ge3$ outside of some finite set. Then, for large $n$
\begin{align*}
    -\frac{2\pi\ka(v_{n})}{\beta(p)}-2\sqrt{-\frac{2\pi\ka(v_{n})}{\beta(p)}} \lesssim \lm_{n}\lesssim  -\frac{2\pi\ka(v_{n})}{\beta(p)}+2\sqrt{-\frac{2\pi\ka(v_{n})}{\beta(p)}},
\end{align*}
that is
\begin{align*}
-\frac{2\pi\ka(v_{n})}{{\lm}_{n}}\to \beta(p),
\end{align*}
and
\begin{align*}
-\frac{\beta(p)}{2}\leq& \liminf_{n\to\infty}  \frac{\sqrt{-2\pi\ka(v_{n})}}{\lm_{n}+2\pi\ka(v_{n})/\beta(p)} \leq \limsup_{n\to\infty}  \frac{\sqrt{-2\pi\ka(v_{n})}}{\lm_{n}+2\pi\ka(v_{n})/\beta(p)}\leq \frac{\beta(p)}{2}.
\end{align*}
\end{cor}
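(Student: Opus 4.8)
The plan is to derive Corollary~\ref{c:eigenvalues} as a direct translation of Theorem~\ref{t:eigenvalues} using the relationship between the vertex degree and the vertex curvature that holds when the face degree is constant. First I would observe that if every face incident to $v$ has degree $p$, then the defining formula for $\ka$ gives $\ka(v) = 1 - d_v/2 + d_v/p$, since each of the $d_v$ corners at $v$ contributes $1/p$. Solving for $d_v$ yields
\begin{align*}
    d_v = \frac{1-\ka(v)}{\tfrac12 - \tfrac1p} = \frac{2p(1-\ka(v))}{p-2} = -\frac{2\pi\ka(v)}{\beta(p)} + \frac{2p}{p-2},
\end{align*}
where I used $\beta(p) = 2\pi(p-2)/p$, so that $2p/(p-2) = 4\pi/\beta(p)$. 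Hence $d_{v_n} = -2\pi\ka(v_n)/\beta(p) + O(1)$, and since $\ka_\infty = -\infty$ forces $\ka(v_n)\to-\infty$ (equivalently $d_{v_n}\to\infty$ by Proposition~\ref{p:ka_vs_deg}), the additive constant $4\pi/\beta(p)$ is negligible compared to $d_{v_n}$ in all the asymptotic statements.

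Next I would simply substitute this identity into the three conclusions of Theorem~\ref{t:eigenvalues}, which applies verbatim since the hypotheses ($G$ planar tessellation, $\ka_\infty = -\infty$) are the same. For the first chain of inequalities $d_{v_n} - 2\sqrt{d_{v_n}} \lesssim \lm_n \lesssim d_{v_n} + 2\sqrt{d_{v_n}}$, replacing $d_{v_n}$ by $-2\pi\ka(v_n)/\beta(p)$ changes both sides only by a bounded amount (and by $o(\sqrt{d_{v_n}})$ in the square-root terms), which is absorbed into the $\lesssim$ notation; this gives the first display. Dividing through by $\lm_n$ and using $\lm_n/d_{v_n}\to1$ then yields $-2\pi\ka(v_n)/\lm_n \to \beta(p)$, which is the second display. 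For the third display, I would start from $-2 \le \liminf (\lm_n - d_{v_n})/\sqrt{d_{v_n}} \le \limsup(\cdots) \le 2$, write $\lm_n - d_{v_n} = (\lm_n + 2\pi\ka(v_n)/\beta(p)) - 4\pi/\beta(p)$, and note that the constant $4\pi/\beta(p)$ divided by $\sqrt{d_{v_n}}\to\infty$ vanishes; also $\sqrt{d_{v_n}} = \sqrt{-2\pi\ka(v_n)/\beta(p)}\,(1+o(1))$, so $\sqrt{d_{v_n}}$ and $\sqrt{-2\pi\ka(v_n)}/\sqrt{\beta(p)}$ are asymptotically interchangeable inside the $\liminf$/$\limsup$. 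Multiplying the bounds by $1/\sqrt{\beta(p)}$ converts the constants $\pm2$ into $\pm 2/\sqrt{\beta(p)}$; a further rescaling by $\sqrt{\beta(p)}$ — equivalently dividing the quotient $\sqrt{-2\pi\ka(v_n)}/(\lm_n + 2\pi\ka(v_n)/\beta(p))$ appropriately — produces the claimed bounds $\pm\beta(p)/2$. (One must be slightly careful here: the quotient in the corollary is the reciprocal-type expression $\sqrt{-2\pi\ka(v_n)}/(\lm_n + 2\pi\ka(v_n)/\beta(p))$, so I would invert the inequalities from Theorem~\ref{t:eigenvalues}, which is legitimate since the denominators have a fixed sign for large $n$.)

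The only genuinely delicate point is bookkeeping the sign of $\lm_n + 2\pi\ka(v_n)/\beta(p) = \lm_n - d_{v_n} + 4\pi/\beta(p)$: this quantity can a priori be of either sign and can be small, so the "reciprocal" form of the third estimate needs the $\liminf$/$\limsup$ of $(\lm_n - d_{v_n})/\sqrt{d_{v_n}}$ to be handled as a two-sided bound rather than naively inverted term-by-term. I would phrase the argument so that the bound $|\lm_n - d_{v_n}| \le (2+o(1))\sqrt{d_{v_n}}$ from Theorem~\ref{t:eigenvalues} (together with the vanishing of the $O(1)$ shift) is used to control the ratio on the set where the denominator is nonzero, which suffices for the $\liminf$/$\limsup$ statement. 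I expect this sign/reciprocal bookkeeping to be the main — and essentially only — obstacle; everything else is the elementary substitution $d_v = -2\pi\ka(v)/\beta(p) + 4\pi/\beta(p)$ valid for $p$-face-regular tessellations, combined directly with the already-established Theorem~\ref{t:eigenvalues}.
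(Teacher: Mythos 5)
Your overall strategy is the intended one: the paper offers no separate argument for the corollary, which is meant to follow from Theorem~\ref{t:eigenvalues} by exactly the substitution you propose, using that for a vertex all of whose incident faces have degree $p$ one has $\ka(v)=1-d_v\big(\tfrac12-\tfrac1p\big)$, hence $d_v=-\tfrac{2\pi\ka(v)}{\beta(p)}+O(1)$ with $\beta(p)$ the interior angle. However, your key identity is internally inconsistent by a factor of $2$. With your stated convention $\beta(p)=2\pi(p-2)/p$ one has $\tfrac{2p}{p-2}=\tfrac{4\pi}{\beta(p)}$, so $\tfrac{2p\ka(v)}{p-2}=\tfrac{4\pi\ka(v)}{\beta(p)}$ and the correct expansion is $d_v=-\tfrac{4\pi\ka(v)}{\beta(p)}+\tfrac{4\pi}{\beta(p)}$, not $-\tfrac{2\pi\ka(v)}{\beta(p)}+\tfrac{4\pi}{\beta(p)}$; under that convention your argument would produce $-2\pi\ka(v_n)/\lm_n\to\beta(p)/2$, contradicting the corollary. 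The identity $d_v=-\tfrac{2\pi\ka(v)}{\beta(p)}+O(1)$ that the corollary actually needs holds for the interior angle as defined in the introduction, $\beta(p)=2\pi\tfrac{p-2}{2p}=\pi\tfrac{p-2}{p}$ (the display just above the corollary is off by a factor $2$ from that). You must fix one convention and redo the algebra consistently; as written, the two ends of your displayed chain of equalities disagree in the coefficient of $\ka(v)$.

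The second, more substantive gap is the third display. You correctly flag the reciprocal as the delicate point, but your proposed fix runs in the wrong direction: the bound $|\lm_n-d_{v_n}|\le(2+o(1))\sqrt{d_{v_n}}$ from Theorem~\ref{t:eigenvalues} yields a \emph{lower} bound on the modulus of $\sqrt{-2\pi\ka(v_n)}\big/\big(\lm_n+2\pi\ka(v_n)/\beta(p)\big)$, namely roughly $\sqrt{\beta(p)}/2$, and gives no upper bound at all --- the quantity blows up along any subsequence where $\lm_n$ is close to $-2\pi\ka(v_n)/\beta(p)$, which the theorem does not exclude. So the claimed inequalities $-\beta(p)/2\le\liminf\le\limsup\le\beta(p)/2$ cannot be obtained from Theorem~\ref{t:eigenvalues} by any inversion bookkeeping; either the fraction in that display is to be read the other way up (in which case the substitution does go through, but then you must also check that the resulting constants are the ones printed), or genuinely additional input beyond Theorem~\ref{t:eigenvalues} is required. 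You should either prove the display in its inverted form or identify what extra lower bound on $|\lm_n-d_{v_n}|$ you would need; the first two displays of the corollary, by contrast, do follow from your substitution once the factor-of-$2$ issue above is repaired.
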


The eigenvalue asymptotics in the theorem and the corollary above present a case where phenomena in  the discrete and continuous world drift apart. In particular, for Riemannian manifolds one expects upper bounds of eigenvalues $\lm_{k}$  by some constant multiplied by $k^{2}$, see \cite{KLP}.

Next, we come to the decay of eigenfunctions. It turns out that eigenfunctions decay exponentially in an $\ell^{2}$ sense.

\begin{thm}\label{t:expdecay}\cite{KL3} If $\ka_{\infty}=-\infty$ and $\ph_{n}\in D(\Delta)$, $n\ge0$, are eigenfunctions, i.e.,
$$    \Delta\ph_{n}=\lm_{n}\ph_{n},$$
then, for any $\beta<e^{-1}$ and $o\in V$,
$$    |\ka|^{\frac{1}{2}}e^{\beta d(o,\cdot)}\ph_{n}\in\ell^{2}(V),$$
where $d(\cdot,\cdot)$ is the natural graph distance.
\end{thm}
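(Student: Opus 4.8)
The plan is to prove the exponential $\ell^2$-decay of eigenfunctions via an Agmon-type estimate, adapted to the combinatorial setting, exploiting that $\ka_\infty=-\infty$ forces both $d_v\to\infty$ and (by Proposition~\ref{p:ka_vs_deg}) $-2\ka(v)\ge d_v$ away from finite sets. The key structural fact is the ground-state representation: for a fixed eigenfunction $\ph=\ph_n$ with $\Delta\ph=\lm\ph$, and for any bounded real weight function $w$ on $V$, one has the identity
\begin{align*}
  \langle \Delta(w\ph),w\ph\rangle = \lm\|w\ph\|^2 - \tfrac12\sum_{v\sim w'} \bigl(w(v)-w(w')\bigr)^2\ph(v)\ph(w'),
\end{align*}
obtained by expanding $(w(v)\ph(v)-w(w')\ph(w'))^2$ and reorganizing. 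The right-hand cross term is controlled by a Cauchy--Schwarz/AM--GM step against $\sum_{v\sim w'}(w(v)-w(w'))^2(\ph(v)^2+\ph(w')^2)$, which for a weight of the form $w=e^{\beta\rho}$, $\rho=d(o,\cdot)$, is bounded by a factor behaving like $(e^\beta-e^{-\beta})^2\sim\beta^2$ times $\sum_v d_v w(v)^2\ph(v)^2$. On the other side, $\langle\Delta(w\ph),w\ph\rangle\le -\tfrac12\sum_{v\sim w'}(w(v)\ph(v)-w(w')\ph(w'))^2+(\text{diagonal})$, and the diagonal term carries the crucial factor $\sum_v d_v w(v)^2\ph(v)^2\ge \sum_v (-2\ka(v))w(v)^2\ph(v)^2$ outside a finite set.

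First I would fix $\beta<e^{-1}$ and choose the weight $w=w_{R}=e^{\beta\min(\rho,R)}$, a bounded Lipschitz cutoff of $e^{\beta\rho}$ so that $w_R\ph\in\ell^2(V)$ and all sums converge; the final claim follows by monotone convergence $R\to\infty$ once a bound uniform in $R$ is established. Next I would combine the two inequalities above: the eigenvalue identity yields, after moving the cross term, an inequality of the shape
\begin{align*}
  \sum_{v} d_v\, w_R(v)^2\ph(v)^2 \;\le\; C(\beta)\sum_{v} d_v\, w_R(v)^2\ph(v)^2 \;+\; (\lm+\text{const})\|w_R\ph\|^2 \;+\; (\text{error on a finite set}),
\end{align*}
where $C(\beta)\to 0$ as $\beta\to 0$ and, more precisely, $C(\beta)<1$ whenever $\beta<e^{-1}$ — this is where the sharp threshold $e^{-1}$ enters, coming from estimating the ratio $(e^\beta-e^{-\beta})^2$ against the Laplacian diagonal. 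Using $d_v\ge -2\ka(v)\ge$ (a quantity tending to $\infty$) outside a finite set $K$, together with $|\ka|^{1/2}\le d_v^{1/2}$, one absorbs the $(\lm+\text{const})\|w_R\ph\|^2$ term into a fraction of $\sum_{v\notin K}d_v w_R^2\ph^2$, since on $V\setminus K$ the coefficient $d_v$ dominates any fixed constant; what remains on $K$ is a finite sum bounded independently of $R$ because $w_R$ is bounded on the finite set $K$ by $e^{\beta R}$... here one must be careful and instead bound the $K$-part by first noting $w_R$ restricted to a slightly larger finite set is uniformly bounded, peeling off finitely many terms. The outcome is $\sum_{v}|\ka(v)|\,w_R(v)^2\ph(v)^2 \le \mathrm{const}$ uniformly in $R$, and then $R\to\infty$ gives $|\ka|^{1/2}e^{\beta d(o,\cdot)}\ph\in\ell^2(V)$.

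The main obstacle I anticipate is twofold. First, making the constant $C(\beta)$ explicit enough to see the exact threshold $\beta<e^{-1}$: the naive Cauchy--Schwarz bound on the cross term loses the sharp constant, so one needs the more refined ground-state transform estimate (essentially the discrete Agmon/Caccioppoli inequality) that pairs $(w(v)-w(w'))^2$ with the edge-energy in a balanced way, and then carefully track how the ratio of weights along an edge, $w(w')/w(v)=e^{\pm\beta}$, interacts with the operator bound $\|{-}\Delta\|_{\text{local}}\lesssim d_v$. The number $e^{-1}$ is precisely the radius of convergence type constant that appears when one optimizes $\sup_{t>0} t e^{-\beta t}$ or an analogous expression governing $d_v e^{-\beta\cdot(\text{distance increment})}$; articulating this cleanly is the technical heart. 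Second, handling the passage $R\to\infty$ and the finite exceptional set $K$ simultaneously: one must ensure that the constant absorbed from the finite set does not secretly depend on $R$ through $e^{\beta R}$, which is arranged by only ever using boundedness of $w_R$ on $K$ (a fixed finite set) and the fact that for $v\in K$, $w_R(v)=e^{\beta\min(d(o,v),R)}\le e^{\beta\,d(o,v)}$ independent of $R$. Everything else — convergence of the sums for bounded $w_R$, selfadjointness on $D(\Delta)$, the identity $\langle\Delta\ph,\ph\rangle=-\tfrac12\sum(\ph(v)-\ph(w'))^2$ — is routine given the framework already set up, in particular Proposition~\ref{p:ka_vs_deg} which supplies the essential inequality $d_v\le -2\ka(v)$.
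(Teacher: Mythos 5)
Your overall strategy --- conjugation by exponential weights $e^{\beta\min(d(o,\cdot),R)}$, a ground-state/commutator identity, absorption of lower-order terms using that $d_{v}\to\infty$ off finite sets, and a final limit $R\to\infty$ --- is the Agmon-type scheme that the chapter attributes to \cite{KL3}; the chapter itself gives no proof beyond a two-sentence indication, so at the level of strategy you are aligned with it. Nevertheless, the two steps that carry the actual content of the theorem are missing. First, the lower bound on the conjugated form is not available the way you describe it. By the paper's own identity, $\langle-\Delta\psi,\psi\rangle=\frac12\sum_{v\sim u}(\psi(v)-\psi(u))^{2}=\sum_{v}d_{v}\psi(v)^{2}-\sum_{v\sim u}\psi(v)\psi(u)$, and the off-diagonal sum is, by Cauchy--Schwarz, a priori of exactly the same size as the ``diagonal term'' $\sum_{v}d_{v}\psi(v)^{2}$ that you want to keep; there is no free inequality $\langle-\Delta\psi,\psi\rangle\gtrsim\sum_{v}d_{v}\psi(v)^{2}$. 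The estimate $\langle-\Delta\psi,\psi\rangle\ge(1-a)\sum_{v}d_{v}\psi(v)^{2}-C_{a}\aV{\psi}^{2}$ with $a$ small is precisely where $\ka_{\infty}=-\infty$ must enter beyond forcing $d_{v}\to\infty$: it yields $\al_{\infty}=1$ and then the Dodziuk--Kendall/Fujiwara isoperimetric form estimate applies outside large finite sets, exactly as in the proof of Theorem~\ref{t:discretespectrum}. This is the load-bearing geometric input, not a routine diagonal extraction, and your sketch never invokes it.

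Second, the threshold $\beta<e^{-1}$ is asserted, not derived. Your candidate mechanism ($\sup_{t>0}te^{-\beta t}$) does not produce $e^{-1}$, and the straightforward treatment of the commutator term that your sketch suggests --- $(w(v)-w(u))^{2}\le(e^{\beta}-1)^{2}w(v)w(u)$ followed by AM--GM, giving the bound $\frac12(e^{\beta}-1)^{2}\sum_{v}d_{v}w(v)^{2}\ph(v)^{2}$ --- leads to a smallness condition on $(e^{\beta}-1)^{2}$ rather than on $\beta$ itself, so it does not reproduce the constant in the statement. The paper explicitly says that $e^{-1}$ arises from an optimization forced by the non-locality of $\Delta$; identifying and carrying out that optimization is the technical heart of \cite{KL3}, and it is exactly the step your proposal leaves open. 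A further small but real slip: Proposition~\ref{p:ka_vs_deg} gives $-2\ka(v)\le d_{v}$ and $\ka(v)\le 1-d_{v}/6$, whereas your opening and closing lines quote it as $d_{v}\le-2\ka(v)$; you use the correct direction in the middle of the argument (and the correct direction is also what the final passage from $\sum_{v}d_{v}e^{2\beta d(o,v)}\ph(v)^{2}<\infty$ to $|\ka|^{1/2}e^{\beta d(o,\cdot)}\ph\in\ell^{2}(V)$ requires), so the two occurrences contradict each other and should be fixed.
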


The proof is based on ideas of Agmon for
Schr\"odinger operators in $\R^{n}$. The somewhat curious constant
$e^{-1}$ comes in via an optimization that is necessary by the
non-locality of the graph Laplacian in contrast to the strongly
local Laplace operator on $\R^{n}$.

\subsection{Unique continuation of eigenfunctions}

Under the phenomena unique continuation one understands that a function which is zero outside of a compact set must also be zero on the compact set.  Such  unique continuation properties for eigenfunctions hold in great generality  for local elliptic operators.
Often very strong quantitative statements, called Carleman estimates, can be proven which all have the basic corollary that there are no eigenfunctions supported on a compact subset of the space. However, for graphs such
eigenfunctions can be produced  rather easily. This was observed by many authors, see e.g.
\cite{AW,BK,DLMSY,KLS}. Below we will also discuss examples. The purpose of this section is to discuss that
non-positive corner curvature excludes such  a phenomena, i.e., there are no compactly eigenfunctions.

Klassert/Lenz/Peyerimhoff/Stollmann \cite{KLPS} proved a
unique continuation result for tessellations with non-positive corner curvature. This result  was later generalized to planar graphs in \cite{K2} with a different proof.

\begin{thm}\cite[Theorem~4]{KLPS}
Let $G$ be a planar tessellation such that $\ka_{C}\le0$. Then, there are no eigenfunctions of compact support.
\end{thm}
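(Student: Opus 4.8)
The plan is to argue by contradiction: suppose $\ph\not\equiv 0$ is an eigenfunction, $\Delta\ph=\lm\ph$, with finite support $W=\supp\ph$. I would first show that $W$ may be taken to be \emph{simply connected} in the sense used in Lemma~\ref{l:Derksen}, or at least reduce to a situation where a boundary-analysis argument applies; the point is to locate a vertex sitting on the ``outer boundary'' of $W$ in a geometrically controlled way. The non-positive corner curvature hypothesis should be used exactly as in Theorem~\ref{t:absence_cut_locus}: balls (and more generally the relevant distance-type sublevel sets) are \emph{admissible} in the sense of \cite{BP2}, meaning their boundary vertices have at most two neighbors inside, the interior is ``convex'', and a spanning-geodesic structure can be run along the boundary. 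This admissibility is the structural engine of the whole argument.

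Concretely, the key step I expect is the following. Choose a vertex $o$ far from $W$ and look at $W$ from $o$; let $v\in W$ be a vertex of maximal distance $d(o,v)=:r$ among vertices in $W$. By admissibility of the ball $B_r(o)$ (which holds because $\ka_C\le 0$), the vertex $v$ lies on $S_r=\partial B_r$ and has the property that it has a controlled number of neighbors in $S_{r+1}$ — in fact, in the cut-locus-free situation, $v$ has a neighbor strictly outside $B_r$, i.e.\ in $S_{r+1}$. But every neighbor of $v$ outside $B_r$ is at distance $>r$ from $o$, hence not in $W$ (by maximality of $r$), so $\ph$ vanishes there. More strongly, the admissibility/convexity should force that among the neighbors of $v$, ``most'' lie outside $W$; combining the eigenvalue equation at $v$,
\begin{align*}
\sum_{w\sim v}\big(\ph(w)-\ph(v)\big)=\lm\,\ph(v),
\end{align*}
with the fact that all the exterior neighbors contribute $0$, one gets a relation among the at most two ``interior'' neighbors of $v$. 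Iterating this along the boundary geodesic guaranteed by admissibility — each boundary vertex having at most two neighbors deeper inside — propagates the vanishing of $\ph$ inward, contradicting $\ph\not\equiv0$. Equivalently, one may phrase it as: the restriction of $\Delta$ to the boundary layer of $W$, after using the exterior vanishing, becomes a map whose only solution is $0$, because the boundary forms a ``line-like'' structure with no closed loops of the wrong kind, thanks to admissibility.

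\emph{The main obstacle.} The delicate point is making precise and exploiting the admissibility of the sublevel sets so that the boundary of $W$ (not of a metric ball, but of the support set itself, which need not be a ball) inherits the ``at most two interior neighbors'' property and the geodesic-boundary structure. One cannot simply take $W=B_r$; one must either replace $W$ by a suitable admissible set containing it and run the argument on the annular region between $W$ and that set, or adapt the boundary combinatorics of \cite{BP2} directly to $\supp\ph$. Getting the propagation of zeros to be genuinely monotone — i.e.\ that peeling off one boundary vertex strictly shrinks the support and the process terminates only at $\ph\equiv0$ — is where the non-positivity of the corner curvature must be invoked in full force, since for positive corner curvature (e.g.\ the Kagome lattice, Example~\ref{e:Kagome}) exactly this monotone peeling fails and compactly supported eigenfunctions do exist. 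A secondary technical nuisance is handling vertices of $W$ that lie on $\partial W$ but have all neighbors inside $W$; admissibility is precisely what rules out such ``trapped'' vertices, mirroring the empty-cut-locus conclusion of Theorem~\ref{t:absence_cut_locus}.
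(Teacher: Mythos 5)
There is a genuine gap, and it sits exactly at the point where your argument is supposed to produce vanishing of $\ph$. You evaluate the eigenvalue equation at a support vertex $v$ of maximal distance from $o$ and observe that its exterior neighbors contribute $0$. But the equation at $v$,
\begin{align*}
\sum_{w\sim v}\ph(w)-d_{v}\ph(v)=\lm\,\ph(v),
\end{align*}
then only yields \emph{one linear relation} between $\ph(v)$ and the values at its interior neighbors; it does not force $\ph(v)=0$, and no amount of iterating such relations along the boundary "propagates zeros inward" by itself. The vanishing has to be extracted from the eigenvalue equation at vertices \emph{outside} the support, where both $\ph$ and $\Delta\ph$ vanish: for $u\in S_{r+1}(o)$ with $\supp\ph\subseteq B_{r}$ one gets $\sum_{w\sim u,\,w\in S_{r}}\ph(w)=0$. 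Collecting these equations over all $u\in S_{r+1}$ gives $E_{r}\ph_{r}=0$, where $E_{r}$ is the incidence operator of edges from $S_{r}$ to $S_{r+1}$ and $\ph_{r}=\ph|_{S_{r}}$. The entire content of the proof in \cite{K2} (which writes $\Delta=E^{\top}+D+E$ in this polar decomposition) is the geometric lemma that $\ka_{C}\le 0$ forces each $E_{r}$ to be \emph{injective}; then $\ph_{r}=0$, and a downward induction over $r$ kills $\ph$ entirely. Your proposal never identifies this key lemma — the appeal to a "line-like boundary structure with no closed loops" is a placeholder for precisely the injectivity statement that has to be proven, and it is here (not in a peeling procedure on $\supp\ph$) that non-positive corner curvature and the admissibility of balls from \cite{BP2} are actually used.

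A secondary point: the obstacle you flag about $\supp\ph$ not being a ball or not being simply connected does not arise in the paper's approach. One fixes an arbitrary center $o$, notes that the finite support lies in some $B_{r}(o)$, and runs the three-term recursion
\begin{align*}
E_{r}\ph_{r}+(D_{r+1}-\lm)\ph_{r+1}+E_{r+1}^{\top}\ph_{r+2}=0
\end{align*}
on spheres; the geometry of the support itself never enters. So the correct fix is not to adapt the boundary combinatorics of \cite{BP2} to $\supp\ph$, but to work with spheres around any center and prove injectivity of the outward edge operators $E_{r}$ under $\ka_{C}\le0$. As written, your argument would not detect the difference between the hypotheses $\ka_{C}\le0$ and $\ka\le0$, yet the Kagome lattice shows the conclusion fails under the latter — a sign that the step carrying the curvature assumption is missing.
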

\begin{proof}[Idea of proof] The proof given in \cite{K2} works by a polar decomposition of the Laplacian into $\Delta=E^{\top}+D+E$, where $D$ is the combinatorial Laplacian on the spheres with Dirichlet boundary conditions and $E$ is the operator with matrix elements $E(v,w)=1$ if $v\in S_{r+1}$, $w\in S_{r}$, $v\sim w$ for all $r\ge0$ and zero otherwise. Hence, the operator $E$ encodes the edges reaching a sphere from the previous one. Denote the restriction of $E$ and $D$ to the functions supported on $S_{r}$ by $E_{r}$ and $D_{r}$. For an eigenfunction $\ph$ let $\ph_{r}$ be the restriction of $\ph$ to $S_{r}$. Then,  the eigenvalue equation $\Delta\ph=\lm\ph$ reads in polar decomposition as
\begin{align*}
    E_{r}\ph_{r}+(D-\lm)\ph_{r+1}+E^{\top}_{r+1}\ph_{r+2}=0,\qquad r\ge0.
\end{align*}
Now, by subtle geometric arguments it is shown by induction over $r$ that $E_{r}$ is injective. Therefore, if $\ph$ vanishes outside of $B_{r}$, the above equation reads as $    E_{r}\ph_{r}=0$ which implies $\ph_{r}=0$ by injectivity of $E_{r}$.
\end{proof}

Let us discuss, that the statement of the theorem fails, if one only assumes $\ka\leq0$.

\begin{eg}
Consider the Kagome lattice of Example~\ref{e:Kagome}. Pick a hexagon and denote the vertices of the adjacent triangles which are not in the hexagon
by $v_{1},\ldots, v_{6}$. Now let $\ph:V\to\{-1,1\}$ be supported on $v_{1},\ldots, v_{6}$ such that $\ph(v_{j})=(-1)^{j}$, $j=1,\ldots,6$. Then it follows that $\Delta\ph=6\ph$ and $\ph$ is therefore an eigenfunction of compact support.
The same idea works if one considers the other examples of Example~\ref{e:Kagome} when we replace the hexagon with a $2p$-gon with $p>3$. In this case we even have $\ka<0$.
\end{eg}

A question that arises is whether it is sufficient that the corner curvature is non positive outside of a finite set in order to have at most finitely many compactly supported eigenfunctions. Still this is not the case as the example in Figure~\ref{f:cplysupportedef} shows.

\begin{figure}[!h]
\scalebox{0.3}{\includegraphics{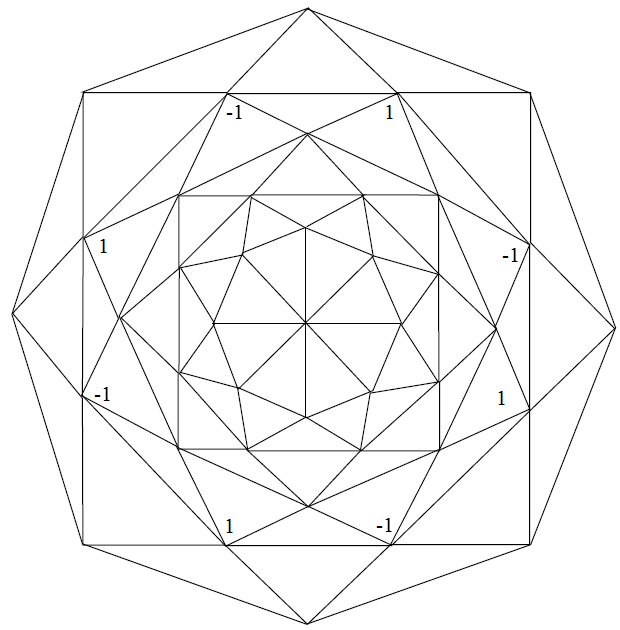}}
\caption{\label{f:example} The first six distance spheres of  a tessellation that admits infinitely many linearly independent compactly supported eigenfunctions and has $\ka_{C}\leq 0$ outside of $B_{2}$}\label{f:cplysupportedef}
\end{figure}

However, if the curvature is sufficiently large outside of a finite set a unique continuation result can be proven.

\begin{thm}\label{t:main2}\cite[Theorem 1.9.]{BGK2}  Let $G$ be a planar graph. Assume $\ka_{\infty}=-\infty$.
Then, outside of a finite set there are no eigenfunctions of compact support of $\Delta$. In particular, there are at most finitely many linearly independent eigenfunctions of finite support.
\end{thm}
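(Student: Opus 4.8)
The plan is to combine the eigenvalue-equation analysis that proves the $\ka_C\le0$ unique continuation theorem with the exhaustion argument that underlies Theorem~\ref{t:discretespectrum}. Recall the polar decomposition $\Delta = E^{\top}+D+E$, where $E$ records the edges going from one distance sphere $S_r$ to the next $S_{r+1}$; for an eigenfunction $\ph$ of compact support, restricting to spheres yields $E_r\ph_r + (D-\lm)\ph_{r+1} + E^{\top}_{r+1}\ph_{r+2}=0$, and if $\ph$ vanishes outside $B_r$ then $E_r\ph_r=0$. The obstruction to running the $\ka_C\le0$ argument is exactly that $E_r$ may fail to be injective on spheres that are "spoiled" by corners of positive curvature inside some finite exceptional set. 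So the first step is to show that the assumption $\ka_\infty=-\infty$ forces, for all sufficiently large $r$, every vertex of $S_{r+1}$ to have at most one neighbor in $S_r$; equivalently, the structure of spheres far out is tree-like in the radial direction, so that $E_r$ is injective (indeed each column has a single nonzero entry) for large $r$.

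To prove this radial tree-likeness I would argue as follows. If $v\in S_{r+1}$ had two (or more) neighbors in $S_r$, then $v$ together with those two radial edges and a short arc in $S_r$ bounds a region; pushing this reasoning via Lemma~\ref{l:Derksen} applied to the finite simply connected set $B_r$ (or a suitable modification), or more directly via the local face/degree bookkeeping, one sees that such a configuration requires either a bounded-degree vertex nearby or a bounded-degree face nearby. But $\ka_\infty=-\infty$ says $\ka(v)\to-\infty$ outside finite sets, which by Proposition~\ref{p:ka_vs_deg} means $d_v\to\infty$; a vertex with huge degree cannot sit in a small radial "pocket" because its many corners must be distributed among the faces around it, and a bounded-degree face adjacent to it contributes a definitely positive corner curvature $1/d_v-1/2+1/\deg(f)$ which, when $\deg(f)$ is small and $d_v$ large, is bounded away from... — here one must be careful, so the cleaner route is: outside a finite set all vertex degrees exceed $6$, hence all corner curvatures there are negative, hence the hypothesis of Theorem~\ref{t:absence_cut_locus} holds \emph{outside a finite set}, which already gives the radial-tree structure of spheres beyond that set by the admissibility/convexity analysis of \cite{BP2}. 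This is the step I expect to be the main obstacle: making precise that "$\ka_C\le0$ outside a finite set" is enough to get injectivity of $E_r$ for large $r$, since the ball-boundary convexity arguments of \cite{BP2} are global; one fixes this by noting that a vertex of $S_{r+1}$ with two neighbors in $S_r$ would, via $\ka_\infty=-\infty$, have to be far out where corner curvature is already non-positive, and then a purely local argument around the offending "triangle" of vertices (two in $S_r$, one in $S_{r+1}$) forces a positively-curved corner there, a contradiction.

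Granting that $E_r$ is injective for all $r\ge R_0$, the argument concludes quickly. Suppose $\ph$ is an eigenfunction of compact support contained in $B_r$ for some $r > R_0$; wlog $\ph_r\neq0$ (else shrink $r$). The sphere equation at radius $r$ gives $E_r\ph_r = 0$, and injectivity of $E_r$ forces $\ph_r = 0$, a contradiction. Hence any compactly supported eigenfunction is supported inside $B_{R_0}$, and since $\ell^2(B_{R_0})$ is finite-dimensional there are at most $\#B_{R_0}$ linearly independent ones. Finally, to get the cleaner statement that there are \emph{no} eigenfunctions supported outside a finite set, one observes that an eigenfunction supported in an annulus $B_r\setminus B_s$ with $s\ge R_0$ would again satisfy $E_r\ph_r=0$ at its outer edge, forcing $\ph_r=0$ and peeling inward until one reaches $S_{s+1}$, where the inner sphere equation $E^{\top}_{s+1}\ph_{s+1} = 0$ together with injectivity of $E_s$ (dualized) forces $\ph_{s+1}=0$ as well — so the support cannot avoid $B_{R_0}$. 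I would present the two pieces in this order: (1) the structural lemma "$\ka_\infty=-\infty$ $\Rightarrow$ spheres are radially tree-like far out $\Rightarrow$ $E_r$ injective for large $r$", citing Theorem~\ref{t:absence_cut_locus} and \cite{BP2} for the boundary-structure input; (2) the one-line peeling argument using the polar-decomposed eigenvalue equation.
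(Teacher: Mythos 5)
Your overall architecture --- polar decomposition $\Delta=E^{\top}+D+E$, injectivity of $E_r$ for all large $r$, then peeling outermost spheres --- is the right shape and matches how the cited result is obtained. But the step you yourself single out as the main obstacle, the structural lemma giving injectivity of $E_r$ for large $r$, is not closed by either of the arguments you offer, and the first of them provably cannot work. You reduce $\ka_{\infty}=-\infty$ to ``$d_v>6$, hence $\ka_C<0$, outside a finite set'' and then want to run the admissibility analysis behind Theorem~\ref{t:absence_cut_locus} on the complement of that finite set. The example of Figure~\ref{f:cplysupportedef} is a tessellation with $\ka_C\le 0$ outside $B_2$ that admits infinitely many linearly independent compactly supported eigenfunctions; if ``$\ka_C\le0$ outside a finite set'' implied injectivity of $E_r$ for all large $r$, your own peeling argument would confine every such eigenfunction to a fixed ball and bound their number by $\#B_{R_0}$. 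So that implication is false: the induction of \cite{BP2,KLPS} establishing injectivity of $E_r$ is genuinely global (it starts at $r=0$ and never recovers once a sphere is spoiled), and any proof must use the full strength of $\ka_{\infty}=-\infty$, i.e.\ $d_v\to\infty$ by Proposition~\ref{p:ka_vs_deg}, not merely eventual negativity of the corner curvature.

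Your fallback --- that a vertex $v\in S_{r+1}$ with two backward neighbours forces a positively curved corner nearby --- is also false, and the target statement (radial tree-likeness, i.e.\ at most one neighbour in $S_r$ for each $v\in S_{r+1}$) is the wrong invariant anyway: already in $\Z^2$ and the hexagonal lattice, where $\ka_C\le0$ everywhere and the $E_r$ are injective, there are many vertices with two backward neighbours; and a triangular face $\{v,w_1,w_2\}$ with $w_1,w_2\in S_r$, $v\in S_{r+1}$ and all three degrees large has corner curvatures $\tfrac{1}{d}-\tfrac{1}{2}+\tfrac{1}{3}<0$, so no positive corner is produced. What is actually needed, and what is missing from your write-up, is a quantitative structural statement: far enough out every vertex has a bounded number of neighbours in its own and in the preceding sphere, hence (since $d_v\to\infty$) an abundance of forward neighbours, and from this together with planarity one deduces $\ker E_r=\{0\}$ for large $r$. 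Establishing that bounded backward/lateral degree is the real content of the proof and rests on the Gau{\ss}--Bonnet and copy-and-paste machinery of \cite{K2,BGK2}; it does not follow from Theorem~\ref{t:absence_cut_locus} applied ``outside a finite set''. (A minor further point: in your annulus argument, injectivity of $E_s$ does not dualize to injectivity of $E_s^{\top}$; but that step is unnecessary, since peeling inward from the outermost sphere already empties the whole annulus once all of its sphere indices exceed $R_0$.)
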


\subsection{$\ell^{p}$ spectrum}
In this section we study the spectral theory of  $-\Delta $ as an operator on the Banach spaces $\ell^{p}(V)$, $p\in[1,\infty]$. Denote the restriction of $\Delta$ to
$$D(\Delta_{p})=\{f\in \ell^{p}(V)\mid \Delta f\in\ell^{p}(V)\}$$
by $\Delta_{p}$ and let $\Delta_{\infty}=\Delta_{1}^{*}$.
Clearly, $D(\Delta_{p})=D(-\Delta_{p})$.

A  question asked by Simon
\cite{Si0} and affirmatively answered by Hempel/Voigt \cite{HV1} for Schr\"odinger
operators is whether the spectrum depends on the underlying Banach
space. Sturm, \cite{Stu1}, addressed this question in the setting of uniformly elliptic operators on manifolds. As a special  case,  he considers consequences of curvature bounds.
For the tessellation case the following result is found in \cite{BHK}.

\begin{thm}\cite[Theorem~7.1.,~7.2.~and~7.3.]{BHK} Let $G$ be a planar tessellation.\\
(a) If $\ka\ge0$, then $\sigma(-\Delta_{2})=\sigma(-\Delta_{p})$ for   $p\in [1,\infty]$.\\
(b) If $-K\leq \ka<0$, then $\lm_{0}(-\Delta_{2})\neq\lm_{0}(-\Delta_{1})$, i.e., $\sigma(-\Delta_{2})\neq\sigma(-\Delta_{1})$.\\
\hspace{1cm}(c) If $\ka_{\infty}=-\infty$, then       $\sigma(-\Delta_{2})=\sigma(-\Delta_{p})$ for all  $p\in (1,\infty)$.
\end{thm}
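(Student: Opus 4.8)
The plan is to treat all three parts from a common starting point. The combinatorial Laplacian generates the heat semigroup $e^{t\Delta}$, which is Markovian and hence a contraction on $\ell^{p}(V)$ for every $p\in[1,\infty]$, and the family $(e^{t\Delta})_{p}$ is consistent; so each statement becomes a question about how the spectrum of the generator of a consistent Markovian semigroup may depend on $p$, and the geometric results of Section~\ref{s:geometry} supply the decisive extra input. For part (a), if $\ka\ge 0$ then Proposition~\ref{p:ka_vs_deg} forces $d_{v}\le 6$, so $\Delta$ is \emph{bounded} on every $\ell^{p}(V)$, and Theorem~\ref{t:grow_nonneg_curvature} gives $\#B_{r}\le Cr^{2}$, i.e. uniformly polynomial, in particular uniformly subexponential, volume growth. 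One then invokes the discrete analogue of Sturm's theorem, namely the general $p$-independence criterion for spectra of operators generating consistent uniformly bounded semigroups on graphs of uniformly subexponential growth; this yields $\sigma(-\Delta_{p})=\sigma(-\Delta_{2})$ for all $p\in[1,\infty]$, the only verification being that bounded geometry together with the quadratic volume bound meets the growth hypothesis of the criterion.

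For part (b), the hypotheses again force bounded vertex degree: from $\ka(v)\le 1-d_{v}/6$ and $-\ka(v)\le K$ one gets $d_{v}\le 6(1+K)$, so all $\Delta_{p}$ are bounded and $m:=\min_{v}d_{v}\ge 3$. The strategy is to compute $\lm_{0}$ on $\ell^{1}$ and on $\ell^{2}$ and see they differ. On $\ell^{1}$: the constant function $\mathbf 1$ lies in $\ell^{\infty}(V)$ with $\Delta\mathbf 1=0$, so $0$ is an eigenvalue of $-\Delta_{\infty}=-\Delta_{1}^{*}$, whence $0\in\sigma(-\Delta_{1})$ and, by Markovianity, $\lm_{0}(-\Delta_{1})=0$. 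On $\ell^{2}$: since $\ka<0$, Higuchi's bound (Theorem~\ref{t:Higuchi}) together with the degree bound gives $K_{0}:=\inf_{v}\bigl(-\ka(v)/d_{v}\bigr)>0$, so Theorem~\ref{t:curv:cheegest} (with $p=\infty$, $q=6(1+K)$, hence $C_{p,q}=1$) yields $\al\ge 2K_{0}>0$, and the Dodziuk--Kendall/Fujiwara bound $\lm_{0}(-\Delta_{2})\ge m\bigl(1-\sqrt{1-\al^{2}}\bigr)$ gives $\lm_{0}(-\Delta_{2})>0$. As $\lm_{0}$ is a spectral invariant, $\sigma(-\Delta_{1})\ne\sigma(-\Delta_{2})$.

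For part (c), Theorem~\ref{t:discretespectrum} makes $\sigma(-\Delta_{2})$ purely discrete, so it consists of eigenvalues with $\ell^{2}$-eigenfunctions and $\lm_{0}(-\Delta_{2})>0$. Since $\ka_{\infty}=-\infty$ and Proposition~\ref{p:ka_vs_deg} give $|\ka(v)|\ge 1$ outside a finite set, Theorem~\ref{t:expdecay} forces each such eigenfunction to decay faster than any exponential, hence to lie in $\ell^{p}(V)$ for all $p\in[1,\infty]$; this gives $\sigma(-\Delta_{2})\subseteq\sigma_{\mathrm{pt}}(-\Delta_{p})\subseteq\sigma(-\Delta_{p})$. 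For the reverse inclusion $\sigma(-\Delta_{p})\subseteq\sigma(-\Delta_{2})$ for $p\in(1,\infty)$, fix $z$ in a spectral gap of $-\Delta_{2}$; then $(-\Delta_{2}-z)^{-1}$ is compact, and Agmon/Combes--Thomas-type estimates of the same flavor as those behind Theorem~\ref{t:expdecay} show that its Green's function $G_{z}$ has $|\ka|^{1/2}$-weighted super-exponential off-diagonal decay, which suffices to make $G_{z}$ act boundedly on every $\ell^{p}(V)$ and to provide the inverse of $-\Delta_{p}-z$, so $z\in\rho(-\Delta_{p})$. The restriction to $p\in(1,\infty)$ is genuine: at $p\in\{1,\infty\}$ the constant-function argument of part (b) again gives $0\in\sigma(-\Delta_{p})$, whereas $0\notin\sigma(-\Delta_{2})$, so the endpoint spectra really differ.

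I expect the main obstacle to be exactly this reverse inclusion in (c): one must control the off-diagonal decay of the Green's function (and of the associated weighted operators) strongly enough to act on $\ell^{p}$ near the endpoints while handling unbounded vertex degrees, so the interplay of the weight $|\ka|^{1/2}$ (comparable to $d_{v}^{1/2}$) with the combinatorial graph distance has to be tracked carefully, precisely as in the proof of Theorem~\ref{t:expdecay}; parts (a) and (b), by contrast, reduce to quoting an existing $p$-independence criterion and to combining the geometric spectral bounds already assembled in Sections~\ref{s:isoperimetric}--\ref{s:spectrum}.
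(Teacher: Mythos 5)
The paper gives no proof of this theorem --- it is quoted verbatim from \cite{BHK} --- so I am judging your proposal on its own terms and against the arguments in that reference.

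Parts (a) and (b) are sound and follow the intended route. In (a) you correctly reduce to bounded degree ($d_{v}\le 6$ from Proposition~\ref{p:ka_vs_deg}) plus polynomial volume growth and then quote the discrete Sturm-type criterion of \cite{BHK}; the one point you should make explicit is that \emph{uniformly} subexponential growth requires the bound of Theorem~\ref{t:grow_nonneg_curvature} to hold uniformly over all centers, which it does only because the constant there depends on the tessellation and not on the base vertex. In (b) both halves --- $0\in\sigma(-\Delta_{1})$ via the constant function and duality (legitimate because $d_{v}\le 6(1+K)$ makes every $\Delta_{p}$ bounded), and $\lm_{0}(-\Delta_{2})>0$ via Theorems~\ref{t:Higuchi} and~\ref{t:curv:cheegest} and the Cheeger-type lower bound --- are correct.

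Part (c) contains a genuine gap. First, a misreading: Theorem~\ref{t:expdecay} gives decay at the \emph{fixed} rate $\beta<e^{-1}$, not faster than any exponential; and since $\ka_{\infty}=-\infty$ forces unbounded degree, $\#S_{r}$ may grow superexponentially, so a fixed exponential rate does not place eigenfunctions in $\ell^{1}$. (The forward inclusion you actually need is elementary for $p\in(1,\infty)$: $\ell^{2}$-eigenfunctions are automatically $\ell^{p}$-eigenfunctions for $p\ge 2$, and $p<2$ follows by duality; no decay estimate is required.) The serious problem is the reverse inclusion $\rho(-\Delta_{2})\subseteq\rho(-\Delta_{p})$. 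Conjugating $\Delta$ by $e^{\beta d(o,\cdot)}$ produces a perturbation whose matrix entries are bounded but whose operator norm is of order $(e^{\beta}-1)\sup_{v}d_{v}=\infty$, so the standard Combes--Thomas argument fails precisely in the regime $\ka_{\infty}=-\infty$; and even granting a pointwise weighted decay of the Green's function, $\ell^{p}$-boundedness via the Schur test needs row and column summability, which again collides with possibly superexponential volume growth. Nothing in your sketch overcomes this. The argument that works (and is essentially the one in \cite{BHK}) avoids kernel estimates entirely: purely discrete $\ell^{2}$-spectrum makes $e^{t\Delta_{2}}$ compact, the semigroup is a contraction on $\ell^{1}$ and $\ell^{\infty}$, and interpolation of compactness makes $e^{t\Delta_{p}}$ compact for all $p\in(1,\infty)$; hence each $-\Delta_{p}$ has purely discrete spectrum consisting of eigenvalues, and consistency of eigenfunctions across the $\ell^{p}$-scale (inclusion of spaces in one direction, duality in the other) identifies all these spectra with $\sigma(-\Delta_{2})$. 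Your closing remark that $0\in\sigma(-\Delta_{p})$ for $p\in\{1,\infty\}$ via the constant function is also unjustified once $\Delta$ is unbounded (one must verify $\mathbf{1}\in D(\Delta_{1}^{*})$, and stochastic incompleteness is a real possibility here), though that remark is not needed for the theorem as stated.
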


\section{Extensions to more general graphs}
In this final section we discuss how the results of the previous section can be generalized to planar graphs and more general polygonal complexes.
\subsection{Curvature on planar graphs}\label{s:planar}
First we address general planar graphs as it was studied in \cite{K2}. The first step is to extend the notion of curvature. Secondly, we show that non-positive already implies that the graph looks locally like a tessellation. Such graphs allow for a suitable embedding into a tessellation which is used to extend the results for tessellations to general planar graphs.

Let $(V,E)$ be a planar graph which is embedded locally compactly into a surface homeomorphic to $\R^{2}$ which gives rise to faces $F$ as above.
To define curvature on planar graphs, we have to extend the definitions of degrees of faces and vertices. In order to do so, we introduce the degree of a corner. As above the corners $C(G)$ are the pairs $(v,f)\in V\times F$ such that $v\in f$. For a corner $(v,f)\in C(G)$ we define the \emph{degree} $|(v,f)|$ by the minimal number of times the vertex $v$ is met by a boundary walk of $f$. Roughly speaking the degree of the corner $(v,f)$ is the number of times $f$ touches $v$.

This gives rise to a definition of the \emph{degree} of  $v\in V$ and $f\in F$ by
\begin{align*}
    d_{v}=\sum_{(v,g)\in C(G)}|(v,g)|\quad\mbox{and}\quad     \deg(f)=\sum_{(w,f)\in C(G)}|(w,f)|.
\end{align*}
In a tessellation the degree of a corner is always one. So, these definitions indeed extend the ones of tessellations.

A face $f$ is called \emph{unbounded} if $\deg(f)=\infty$. Moreover, a planar graph is {simple} if and only if $\deg(f)\ge 3$ for all $f\in F$.

We define the \emph{corner curvature } $\ka_{C}:C(G)\to\R$ by
\begin{align*}
    \ka_{C}(v,f)=\frac{1}{d_{v}}-\frac{1}{2}+\frac{1}{\deg(f)}
\end{align*}
and the \emph{vertex curvature} by $\ka:V\to\R$ by
\begin{align*}
\ka (v)=\sum_{(v,f)\in C(G)}|(v,f)|\ka_{C}(v,f).
\end{align*}
These definitions are consistent with the definition of $\ka_{C}$ and $\ka$ on tessellations. Furthermore, they allow to show a Gau\ss-Bonnet formula, \cite[Proposition~1]{K2}.

It turns out that non-negative curvature already has strong consequences on the structure of the graph. To this end, we look at a generalization of tessellations. We call a face a \emph{polygon} if it is homeomorphic to an open  disc and we call it an \emph{infinigon} if it is homeomorphic to the upper half space in $\R^{2}$.  For example the faces of a tessellation are all polygons by (T3) and the faces of a tree are infinigons.

We call a planar graph \emph{locally tessellating} if it satisfies the following properties:
\begin{itemize}
  \item [(T1)] Every edge is contained in two faces.
  \item [(T2*)] Two faces are either disjoint or intersect in a vertex or in a path of  edges. If this path consists of  more than one edge then both faces are unbounded.
  \item [(T3*)] Every face is a polygon or an infinigon.
\end{itemize}
The assumption (T1) is the same as in the definition of tessellations.

Tessellations, trees as well as hybrids of both of these are examples of locally tessellating graphs. As mentioned above non-positive curvature on planar graphs implies a graph is almost a tessellation.

\begin{thm}\cite[Theorem~1]{K2} Let $G$ be a connected planar graph. If  $\ka_{C}\leq0$ or if $G$ is simple with $\ka\le0$ then $G$ is locally tessellating and infinite.
\end{thm}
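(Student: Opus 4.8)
The plan is to establish the two assertions---locally tessellating and infinite---separately, with the structural assertion doing most of the work. First I would argue infiniteness. Under $\ka_C \le 0$ we have $\ka(v) = \sum_{(v,f)\in C(G)} |(v,f)|\,\ka_C(v,f) \le 0$ at every vertex, and the same holds under the hypothesis $\ka \le 0$ directly; in either case the (suitably normalized) Gau\ss-Bonnet formula for planar graphs from \cite[Proposition~1]{K2} forces $G$ to be infinite, since a finite tessellation-type complex embedded in a surface homeomorphic to $\R^2$ would have total curvature equal to a positive Euler characteristic, contradicting non-positivity of all the $\ka(v)$. (One must be a little careful with the unbounded face; the point is that a finite planar graph with all bounded faces would be spherical, and Gau\ss-Bonnet on $\mathbb{S}^2$ gives total curvature $2 > 0$.)

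For the structural part I would verify the three axioms (T1), (T2*), (T3*). Axiom (T1) is assumed outright, so nothing is needed there. For (T3*) the task is to rule out faces that are neither polygons nor infinigons---i.e., faces whose boundary walk visits some vertex more than once in a way that is not the ``infinigon'' pattern, or faces that are not simply connected in the appropriate sense. The key local mechanism is that a high corner degree $|(v,f)|$ makes $\ka_C(v,f)$ only mildly negative but multiplies it by $|(v,f)|$ in $\ka(v)$, while simultaneously $\deg(f)$ being large (or infinite) pushes $\ka_C$ toward $1/d_v - 1/2$; combining these with the constraint $d_v \ge 3$ (and, in the simple case, $\deg(f)\ge 3$), one shows that any pathological face configuration would force some corner or vertex curvature to be positive. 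Concretely, I would suppose a face $f$ touches a vertex $v$ with $|(v,f)| \ge 2$ and $f$ is bounded (finite $\deg(f)$), and then show via the formula $\ka(v) = \sum_g |(v,g)|\ka_C(v,g)$ that, unless $d_v$ is small and all other corners at $v$ are triangular in a rigid way, $\ka(v) > 0$. The borderline cases are exactly where one must invoke the full hypothesis: under $\ka_C \le 0$ one gets (T2*)'s clause that a shared path of length $>1$ forces both faces unbounded, essentially because a bounded face sharing two consecutive edges with another would create a corner of degree $\ge 2$ at the interior vertex of that path with $\deg(f)<\infty$, hence positive corner curvature.

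For (T2*) proper---two faces meeting in a vertex, a single edge, or an unbounded path---the argument is of the same flavor: if two faces $f_1, f_2$ shared, say, two disjoint edges, or met in two distinct vertices without the intervening path lying in their common boundary, one could locate a vertex $v$ at which the corner structure violates $\ka_C \le 0$ (or $\ka \le 0$ in the simple case). I expect the main obstacle to be the bookkeeping in (T2*) and (T3*) simultaneously: the axioms are intertwined, and the cleanest route is probably to prove them together by a local case analysis around each vertex, classifying the cyclic arrangement of faces and corners around $v$ and showing that non-positivity of curvature is incompatible with any arrangement other than the locally tessellating ones. The subtlety is handling infinigons and unbounded faces correctly, since there $\deg(f) = \infty$ kills the $1/\deg(f)$ term and the constraint degenerates to $d_v \le 2$ being forbidden---which is automatic---so the unbounded faces are precisely the ``escape valve'' that the axioms (T2*), (T3*) are designed to accommodate. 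Once the local structure is pinned down at every vertex, the global statement that $G$ is locally tessellating follows by assembling the local pictures, and infiniteness then also follows a second way from the fact that a locally tessellating graph carrying a bounded face everywhere and being finite is impossible under non-positive curvature.
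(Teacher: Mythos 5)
Your infiniteness argument is fine in spirit (Gau\ss--Bonnet for planar graphs, \cite[Proposition~1]{K2}, applied to a finite graph yields positive total curvature), but the structural part contains a genuine gap: the local mechanism you propose does not exist. You claim that a pathological configuration --- say a bounded face $f$ with a corner $(v,f)$ of degree $|(v,f)|\ge 2$ --- forces positive corner or vertex curvature \emph{at $v$}, with the heuristic that large $\deg(f)$ ``pushes $\ka_C$ toward $1/d_v-1/2$.'' This is backwards: increasing $d_v$ and $\deg(f)$ makes $\ka_{C}(v,f)=1/d_v-1/2+1/\deg(f)$ \emph{more} negative. Concretely, with $d_v=10$ and $\deg(f)=100$ one has $\ka_C(v,f)<0$ even with $|(v,f)|=2$, and all the other corners at $v$ can simultaneously have negative curvature; the same is true at every vertex on the boundary walk of $f$. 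So a violation of (T2*) or (T3*) is perfectly consistent with $\ka_C\le 0$ in a neighbourhood of the vertex where the violation is visible, and no case analysis of the cyclic arrangement of corners around a single vertex can produce the contradiction. The obstruction is inherently global: a bounded face whose boundary walk pinches at $v$ (or two bounded faces sharing a long path, or a disconnected/non-disc face) cuts off a \emph{finite} lobe of the graph, and it is somewhere inside that lobe --- not at $v$ --- that positive curvature must appear.

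This is exactly why the proof in \cite{K2} proceeds differently: one isolates the finite subgraph with simply closed boundary on which one of (T1), (T2*), (T3*) fails, takes finitely many copies of it, glues them along their boundary paths so as to obtain a tessellation of the sphere $\mathbb{S}^{2}$, and then invokes the Gau\ss--Bonnet theorem on the sphere (total curvature $2>0$) to conclude that some vertex of the glued complex --- hence some vertex of the original finite piece --- carries positive (corner) curvature. Your write-up never supplies a substitute for this doubling/gluing step, and without it the argument for (T2*) and (T3*) does not close. Two smaller issues: (T1) is \emph{not} ``assumed outright'' --- it is part of the conclusion that must be verified for a general connected planar graph (a bridge edge, for instance, lies in only one face) --- and the borderline bookkeeping you defer (``unless $d_v$ is small and all other corners are triangular in a rigid way'') is precisely where the local approach would have to, and cannot, succeed.
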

\begin{proof}[Idea of proof]
To prove this theorem one isolates finite subgraphs with simply closed boundary of the tessellation on which some of the assumptions (T1), (T2*), (T3*) fail. Then one copies this subgraph finitely many times and pastes the copies along their boundary paths to be finally embedded into the  unit sphere.
Now, the Gau\ss-Bonnet theorem implies that there must be some positive curvature.
\end{proof}

By \cite[Theorem~2]{K2} locally tessellating graphs can be embedded into tessellations in a way that approximates the original curvature arbitrarily close.

\begin{thm}\label{t:embedding} {Let $G$ be a  locally tessellating graph that satisfies $\ka\leq0$. Let $W\subset V$ be a finite and simply connected set and $\eps\in(0,1/1806)$. Then, there is a tessellation $G'$ which is a supergraph of $G$ such that the following properties hold.
\begin{itemize}
\item[(a)]  The embedding of $G$ into the supergraph $G'$ is a graph isomorphism of the subgraphs $G_W$ and $G_{W}'$ and the  embedding is an isometry on $W$.
\item[(b)]  If $\ka_C\leq 0 $, then $\ka_C'\le\ka_C+\eps$ and if  $\ka\leq 0 $, then $\ka'\leq \ka+\eps$.
\end{itemize}}
\end{thm}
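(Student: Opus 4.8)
The plan is to produce $G'$ by keeping all of $G$ — so that $G'$ is automatically a supergraph and $G_W$ is preserved as an induced subgraph — and by replacing each unbounded face of $G$ with infinitely many bounded polygons of large degree. The starting observation is that a locally tessellating graph fails to be a tessellation only through (T2*) and (T3*): either a face is an infinigon, or two faces share a path of more than one edge, which by (T2*) forces both of them to be unbounded. So all defects are localized at the unbounded faces, and it suffices to explain how to ``fill them in''. It is also useful that the hypothesis $\ka\le 0$ forces $d_v\ge 3$ at every vertex lying on at least one bounded face (a degree-$1$ or degree-$2$ such vertex would have $\ka(v)>0$); the remaining vertices, incident only to unbounded faces, are handled directly, exactly as in the passage from a bi-infinite line to the square tessellation of $\Z^{2}$.

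For an unbounded face $f$, write its boundary as a locally finite walk $\dots p_{-1}p_0p_1\dots$ in $G$. Inside $f$ I would add a fresh path $\dots q_{-1}q_0q_1\dots$ running parallel to this walk, together with ``rungs'' joining it to the walk, so that the strip between the two is subdivided into a row of new polygons; the region beyond the new path is again half-plane–like and is filled in the same way (or completed by a fixed half-plane tessellation into large polygons). This is locally finite and leaves every face of $G'$ a bounded polygon, and an old boundary edge ends up in exactly one new polygon together with its old neighbour on the other side — or, if $f$ lay on both sides of it, in two new polygons — so (T1) is respected. The degrees of the new polygons are chosen large, as a function of $\eps$ and the local vertex degrees: at an old vertex $v$ each infinigon corner, which contributed $\tfrac1{d_v}-\tfrac12$, is replaced by corners of new polygons of degree $D$ contributing $\tfrac1{d_v}-\tfrac12+\tfrac1D$, and the rungs at $v$ carry a $-\tfrac12$ from the degree term against only a small $\tfrac1D$ from the face they create; collecting everything, $\ka'(v)$ and $\ka_C'$ exceed $\ka(v)$ and $\ka_C$ by at most a bounded multiple of $\tfrac1D$ (the bound depending only on $d_v$), hence by at most $\eps$ once $D$ is large enough, and since the new corner curvatures have the form $\tfrac1{d'_v}-\tfrac12+\tfrac1D$ with $d'_v\ge 3$, taking $D\ge 6$ makes them non-positive outright. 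A (T2*)-defect along a path shared by two unbounded faces is treated identically, staggering the rows of new polygons on the two sides of the path so that each touches it in at most one edge.

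The step I expect to be the genuine obstacle is verifying (T2) for $G'$: one must check that any two new polygons, and a new polygon with an old bounded face, meet in at most a vertex or an edge, which requires laying out the rows carefully — large polygon degrees, staggering along paths bordered by two unbounded faces, and making sure no edge is claimed by three faces — and is most delicate at vertices of high corner degree and along such shared paths. Granting this, the rest is routine. Distances on $W$ are unchanged because only vertices and edges were added, so $d_{G'}\le d_G$ on $W$, while for the reverse inequality the largeness of the new polygons ensures that a path entering a filled face $f$ at a boundary vertex $a$ and leaving it at a boundary vertex $b$ has length at least the boundary distance between $a$ and $b$ along $\partial f$, hence at least $d_G(a,b)$; replacing each such excursion of a $G'$-geodesic between points of $W$ by a path in $G$ then gives $d_{G'}(x,y)\ge d_G(x,y)$. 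Finiteness and simple-connectedness of $W$ serve to localize the construction — only finitely many faces meet $W$, and connectedness of its complement makes the side to be filled well defined — and the restriction $\eps<1/1806$ is recorded with the applications in mind: the positive values of corner curvature on tessellations are bounded below (the smallest is $1/30$), so $\ka_C'\le\ka_C+\eps\le\eps$ then forces $\ka_C'\le 0$ whenever $\ka_C\le 0$.
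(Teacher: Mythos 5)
The survey itself offers no proof of this theorem: it is stated as a quoted result and attributed to \cite[Theorem~2]{K2}, so there is no in-paper argument to compare against. Your sketch reconstructs what is essentially the strategy of that reference: keep all of $G$, observe that a locally tessellating graph fails to be a tessellation only at its unbounded faces, and fill each infinigon with successive strips of polygons of large degree $D$, so that each old infinigon corner $\frac{1}{d_v}-\frac12$ is replaced by corners contributing an extra $O(1/D)$, each rung vertex gains a $-\frac12$ from the degree term against only $O(1/D)$ from the new face, and each new vertex has degree at least $3$ and hence non-positive corner curvature once $D\ge 6$. Your bookkeeping at rung vertices (the $(d-1)\bigl(\frac{1}{d+1}-\frac1d\bigr)$ term cancelling against $\frac{2}{d+1}-\frac1d$) is correct, and your observation that degree $\le 2$ vertices can only occur on unbounded faces under $\ka\le0$ is also right.

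Two points deserve more care than you give them. First, the isometry claim is not automatic from ``large polygons'': if rungs are spaced $k$ steps apart along the boundary walk and the parallel path between consecutive rung feet has $m$ edges, the strip polygon has degree $k+m+2$ but a shortcut appears unless $m+2\ge k$, and the same constraint must propagate to every deeper row; your ``boundary distance'' argument is the right idea but silently requires each parallel path to be at least as long as the boundary segment it shadows. Second, your closing explanation of the threshold $\eps<1/1806$ via the bound $1/30$ on positive corner curvatures only accounts for the corner-curvature half of the statement; the constant $1/1806$ is Higuchi's bound (Theorem~\ref{t:Higuchi}) on negative \emph{vertex} curvature, which is what guarantees that $\ka<0$ implies $\ka'\le\ka+\eps<0$ in the applications. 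Beyond that, you correctly identify the verification of (T2) for the new faces (and the degenerate cases: an edge bordered twice by the same infinigon, and (T2*)-paths shared by two unbounded faces) as the genuinely laborious part, which is indeed where the bulk of the work in \cite{K2} lies.
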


By doing so, one can carry over results from tessellations to locally tessellating graphs and by  Theorem~\ref{t:embedding} above to planar graphs in the case of non-positive curvature.

Among the geometric applications are the following:
\begin{itemize}
\item Absence of cut locus for non-positive corner curvature \cite[Theorem~3]{K2}.
\item Bounds for the growth of distance balls for negative vertex curvature     \cite[Theorem~5]{K2}.
\item Positivity and bounds for the isoperimetric constant constant for negative vertex curvature   \cite[Theorem~6]{K2}.
 \item  Gromov hyperbolicity for  negative corner curvature     \cite[Theorem~7]{K2}.
\end{itemize}

There are also applications in spectral theory.
\begin{itemize}
  \item The geometric bounds yield bounds on the bottom of the spectrum.
  \item  Uniformly decreasing curvature is equivalent to purely discrete spectrum \cite[Theorem~8]{K2}.
  \item The same eigenvalue asymptotics as in Theorem~\ref{t:eigenvalues} \cite{BGK2}, and the same decay of eigenfunctions as in Theorem~\ref{t:expdecay} \cite{KL3}.
  \item Unique continuation results for eigenfunctions \cite[Theorem~9]{K2}.
\end{itemize}

\subsection{Sectional curvature for polygonal complexes}

To end this chaper we  discuss generalizations to non planar graphs for which we can define a notion of sectional curvature. In \cite{Wi}  such a program was undertaken to study questions in group theory. In \cite{KPP} another somewhat more restrictive approach was taken to define curvature for polygonal complexes which however allows to prove various results in geometry and spectral theory.

Let us be more precise. In \cite{KPP} a \emph{polygonal complex} $G=(V,E,F)$ is said to have\emph{ planar substructures} if there is a system $\mathcal{A}$ of subcomplexes called the \emph{apartments} that satisfy the following axioms:
\begin{itemize}
  \item [(PCPS1)] For every two faces there is an apartment which contains both of them.
  \item [(PCPS2)] The apartments are convex, that is every geodesic of faces, which starts and ends in an apartment, stays completely in the apartment.
  \item [(PCPS3)] The apartments are planar or spherical tessellations.
\end{itemize}
An important example of such polygonal complexes are two dimensional buildings.

We define the  degree $\deg(f)$ of a face $f\in F$ as in the case of tessellation by the number of edges included in $f$. Moreover, we denote the  degree of a vertex $v$ with respect to an apartment $\Sigma$ by $d_{v}^{(\Sigma)}$. The corners of an apartment are denote by $C_{\Sigma}(G)$.
For an apartment $\Sigma=(V_{\Sigma},E_{\Sigma}, F_{\Sigma})$, we define the \emph{sectional corner curvature}  $\ka_{C}:C_{\Sigma}(G)\to\R$ with respect to $\Sigma$ by
\begin{align*}
    \ka_{C}^{(\Sigma)}(v,f)=\frac{1}{d_{v}^{\Sigma}}-\frac{1}{2}+\frac{1}{\deg(f)}
\end{align*}
and the \emph{sectional face curvature} by $\ka^{(\Sigma)}_{F}:F_{\Sigma}\to\R$ by
\begin{align*}
\ka^{(\Sigma)}_{F} (f)=\sum_{(v,f)\in C_{\Sigma}(G)}\ka_{C}^{(\Sigma)}(v,f) =1- \frac{\deg(f)}{2}+\sum_{v\in V_{\Sigma}, v\in f}\frac{1}{d_{v}^{\Sigma}}.
\end{align*}

With this definition one can prove various of the results of Section~\ref{s:geometry} and Section~\ref{s:spectrum}. Here, we refrain from stating the results precisely but only mention them and refer to \cite{KPP} for details:
\begin{itemize}
\item Finiteness and infiniteness depending on the sign of the curvature, \cite[Theorem~3.13.]{KPP}
\item Absence of cut locus for non-positive sectional corner curvature \cite[Theorem 3.1.]{KPP}
\item Positivity and bounds for an isoperimetric constant constant for negative sectional  face curvature,    \cite[Theorem~3.8.~and~3.11.]{KPP}
 \item  Gromov hyperbolicity for negative sectional corner curvature,     \cite[Theorem~3.6.]{KPP}
\end{itemize}

To study spectral theory one considers the combinatorial Laplacian on functions defined on faces. This is due to the fact that the geometric assumptions for the polygonal complexes are made with respect to the faces. For a function $\ph:F\to\R$, we define
\begin{align*}
    \Delta \ph(f)=\sum_{g\in F,\ g\sim f}(\ph(g)-\ph(f)),
\end{align*}
where $g\sim f$ means that $f$ and $g$ share an edge. Restricting $\Delta$ to
\begin{align*}
    D(\Delta)=\{\ph\in\ell^{2}(F)\mid \Delta\ph\in\ell^{2}(F)\}
\end{align*}
gives rise to a selfadjoint operator. For this operator one can prove:
\begin{itemize}
\item Discreteness of spectrum and eigenvalue asymptotics under the assumption of uniform decreasing corner curvature, \cite[Theorem~4.1.]{KPP}
\item Unique continuation of eigenfunctions, \cite[Theorem~4.3.]{KPP}.
\end{itemize}




\textbf{Acknowledgement}.
MK enjoyed the hospitality of C.I.R.M. and acknowledges the financial support of the German Science Foundation (DFG).

\bibliographystyle{alpha}
\newcommand{\etalchar}[1]{$^{#1}$}

\end{document}